\documentclass{amsart}

\usepackage[T1]{fontenc}
\usepackage{enumerate, amsmath, amsfonts, amssymb, amsthm, dsfont, mathrsfs, wasysym, graphics, graphicx, xcolor, url, hyperref, hypcap, xargs, xstring, multicol, pdflscape, multirow, hvfloat, array, ae, aecompl, pifont, mathtools, a4wide, float, blkarray, overpic, nicefrac, stmaryrd, anyfontsize, yfonts, fontawesome}
\usepackage{pdflscape}
\usepackage{tikz}
\usepackage{tikz-3dplot}
\usepackage{bbm, enumerate, paralist}
\usepackage[noabbrev,capitalise]{cleveref}
\usepackage[normalem]{ulem}
\usepackage{marginnote}
\usepackage{enumitem}
\usepackage{svg}
\usepackage{cleveref}
\usepackage{mathdots}
\usepackage{tabularx}
\usepackage{makecell}
\usepackage{booktabs}
\usepackage{rotating}

\usepackage{listings, color}

\makeatletter
\renewcommand\paragraph{\@startsection
  {paragraph}{4}%
  \z@
  {-6\p@\@plus-2\p@\@minus-.5\p@}%
  {-1em}%
  {\normalfont\bfseries}}
\makeatother

\hypersetup{colorlinks=true, citecolor=darkblue, linkcolor=darkblue}
\usepackage[all]{xy}
\usepackage{tikz}
\usepackage{tikz-cd}
\usetikzlibrary{trees, decorations, decorations.pathmorphing, decorations.markings, decorations.shapes, shapes, arrows, matrix, calc, fit, intersections, patterns, angles}
\usepackage{caption}
\usepackage[export]{adjustbox}

\usepackage{paralist}

\newtheorem{thmUniv}{Theorem}

\newtheorem{theorem}{Theorem}[section]
\newtheorem{corollary}[theorem]{Corollary}
\newtheorem{proposition}[theorem]{Proposition}
\newtheorem{lemma}[theorem]{Lemma}
\newtheorem{conjecture}[theorem]{Conjecture}
\newtheorem*{theorem*}{Theorem}%

\theoremstyle{definition}
\newtheorem{definition}[theorem]{Definition}
\newtheorem{example}[theorem]{Example}
\newtheorem{remark}[theorem]{Remark}

\AddToHook{env/theorem/begin}{\crefalias{section}{theorem}}
\AddToHook{env/proposition/begin}{\crefalias{theorem}{proposition}}
\AddToHook{env/corollary/begin}{\crefalias{theorem}{corollary}}
\AddToHook{env/lemma/begin}{\crefalias{theorem}{lemma}}
\AddToHook{env/conjecture/begin}{\crefalias{theorem}{conjecture}}
\AddToHook{env/definition/begin}{\crefalias{theorem}{definition}}
\AddToHook{env/example/begin}{\crefalias{theorem}{example}}
\AddToHook{env/remark/begin}{\crefalias{theorem}{remark}}
\AddToHook{env/observation/begin}{\crefalias{theorem}{observation}}
\AddToHook{env/question/begin}{\crefalias{theorem}{question}}
\AddToHook{env/notation/begin}{\crefalias{theorem}{notation}}
\AddToHook{env/assumption/begin}{\crefalias{theorem}{assumption}}
\AddToHook{env/convention/begin}{\crefalias{theorem}{convention}}

\crefname{equation}{Equation}{Equations}

\newcommand{\R}{\mathbb{R}} %
\newcommand{\N}{\mathbb{N}} %
\newcommand{\Z}{\mathbb{Z}} %
\renewcommand{\c}[1]{{\mathcal{#1}}} %
\renewcommand{\b}[1]{{\boldsymbol{#1}}} %
\renewcommand{\epsilon}{\varepsilon} %
\newcommand{\polytope}[1]{\mathsf{#1}}
\newcommand\pol[1][P]{\polytope{#1}}
\newcommand{\polQ}{\pol[Q]}
\newcommand{\polS}{\pol[S]}
\newcommand{\polF}{\pol[F]}
\newcommand{\polD}{\pol[D]}
\newcommand{\Chim}{\pol[Chim]}

\newcommand{\set}[2]{\left\{ #1 \;\middle|\; #2 \right\}} %
\newcommand{\ssm}{\smallsetminus} %
\newcommand{\coloneqq}{\mbox{\,\raisebox{0.2ex}{\scriptsize\ensuremath{\mathrm:}}\ensuremath{=}\,}} %
\newcommand{\simplex}{\polytope{\Delta}} %
\DeclareMathOperator{\conv}{conv} %
\DeclareMathOperator{\nVol}{nVol}
\DeclareMathOperator{\vol}{Vol}
\DeclareMathOperator{\Ehr}{Ehr}
\DeclareMathOperator{\UniEq}{\simeq_{\Z}}

\newcommand{\ie}{\textit{i.e.,}~} %
\newcommand{\eg}{\textit{e.g.,}~} %
\newcommand{\aka}{\textit{a.k.a.}~} %
\definecolor{darkblue}{rgb}{0,0,0.7} %
\definecolor{green}{RGB}{57,181,74} %
\definecolor{violet}{RGB}{147,39,143} %
\newcommand{\darkblue}{\color{darkblue}} %
\newcommand{\defn}[1]{\textsl{\darkblue #1}} %
\newcommand{\mathdefn}[1]{{\darkblue #1}} %

\newcommand{\OEIS}[1]{\href{https://oeis.org/#1}{\cite[#1]{OEIS}}} %

\newcommandx{\Sylv}[2][1=k,2=d]{
\IfInteger{#1}
{\ifnum #1 < 0 {\pol[Base]_{#2}} \else {\pol[Sylv]^{#1}_{#2}}\fi}
{\pol[Sylv]^{#1}_{#2}}
}
\newcommandx{\Sk}[2][1=k,2=d]{\Sylv[#1][#2]}
\newcommandx{\Base}[1][1=d]{\Sylv[-1][#1]}
\newcommandx{\SimplSet}[2][1=d,2=k]{\c S(d, k)}
\newcommandx{\EhrPoly}[2][1=P,2=t]{\Ehr(#1;#2)}
\newcommandx{\hstarPoly}[2][1=P,2=t]{h^{\ast}(#1;#2)}

\makeatletter
\newcommand{\sectionnotoc}[1]{%
  {\let\addcontentsline\@gobblethree
   \section*{#1}}%
}

\usepackage{todonotes}

\thanks{}

\AtEndDocument{%
\bigskip\bigskip\bigskip\bigskip\bigskip
\footnotesize
\noindent
\begin{minipage}[t]{0.65\textwidth}
\textsc{Universität Osnabrück, Germany}\\
Martina Juhnke: \texttt{martina.juhnke@uni-osnabrueck.de}\\
Germain Poullot: \texttt{germain.poullot@uni-osnabrueck.de} \\
Jhon B. Caicedo: \texttt{jhon.bladimir.caicedo.portilla@uni-osnabrueck.de}
\end{minipage}
\hspace{-1.25cm}
\begin{minipage}[t]{0.42\textwidth}
\textsc{Centro de Investigaci\'on en Matem\'aticas, Guanajuato}\\
Federico Castillo: \texttt{federico.castillo@cimat.mx}
\end{minipage}
}

\begin{document}

\title{Sylvester simplices:\linebreak
Triangulations and Ehrhart-theoretic aspects}
\author{Jhon B. Caicedo, Federico Castillo, Martina Juhnke and Germain Poullot}
\date{}

\begin{abstract}
The Sylvester simplex $\Sylv$ is a $d$-dimensional lattice simplex with exactly $k$ interior lattice points.
Sylvester simplices are conjectured to be the volume maximizers among all $d$-dimensional lattice polytopes with exactly $k$ interior lattice points for any $k\geq 1$.
Even stronger, it is conjectured that they maximize (entry-wise) the $h^\ast$-vector among all $d$-dimensional lattice polytopes with exactly $k$ interior lattice points.
Yet, Sylvester simplices seem to be rarely studied in their own right.
In particular, their Ehrhart-theoretic properties are far from being well understood.

In the present article, we tackle this problem.
We describe flag, regular and unimodular triangulations for the Sylvester simplices, and prove that their $h^\ast$-vectors are unimodal.
Moreover, we explicitly determine the values of some entries of their $f^\ast$-vectors, and prove that they are Ehrhart magic positive up to dimension $6$ but not in dimension $7$.
We conclude by detailing tables of Ehrhart-theoretic quantities (numbers of lattice points, Ehrhart polynomials, local and boundary $h^\ast$-vectors, $f^\ast$-vectors) for Sylvester simplices of dimensions 7 and lower.
\end{abstract}

\maketitle

\vspace{-0.5cm}
\tableofcontents

\vspace{-0.5cm}
\sectionnotoc{Acknowledgements}

Jhon B. Caicedo was supported by DFG grant JU 3097/4-1.
The authors deeply thank Gabriele Balletti, Matthias Beck, Christian Haase and  Benjamin Nill for insightful conversations.
This project started during the Intensive Research Program on Combinatorial Geometries and Geometric Combinatorics,
held at the Centre de Recerca Matemàtica in October-November 2025, and funded by the Severo Ochoa and María de Maeztu Program for Centers and Units of Excellence in R\&D (CEX2020-001084-M), the Institute de Matemàtica de la Universitat de Barcelona, and the Spanish projects PID2022-137283NB-C21 and RED2024-153572-T of MICIU/AEI /10.13039/501100011033.\\

\sectionnotoc{Use of AI declaration}

We used AI models (namely ChatGPT using GPT-5.5 then GPT-5.6 Sol, and Gemini 3.1) to:
\begin{itemize}
\item Correct mistakes in English writing (\ie not to write for us),
\item Search the literature when we were not sure of the first reference for a result (\ie not to summarize articles);
\item Improve the readability of our tables in Appendix~\ref{app:Tables} (\ie not to compute their content);
\item After having identified all the patterns that were visible to our human eyes, we also asked Gemini 3.1 to search for numerical coincidences in \Cref{tab:f_star_coefficients}, and it hinted at the values of $x_{d, d-1}$ and $y_{d, d-1}$ presented in \Cref{prop:fStarExplicitValues}: we then designed a better expression and proved it ourselves;
\item Improve our implementation of the computation (\ie not to guess the values) of $h^\ast(\Sylv; t)$ derived from \Cref{thm:hStarSylvester} in order to compute the case $d = 7$ and $k = 0$: this numerical experiment led to \Cref{prop:NotMagicPositive}.
\end{itemize}

Apart from these specific points, all statements, proofs, concepts, notations, figures, references to the literature, etc. were made by us, as humans, using non-AI-powered tools.
Computations of the Ehrhart-theoretic quantities in Appendix~\ref{app:Tables} were run using SageMath~\cite{Sage} and our own  code (except for $d = 7$), not relying on AI-powered computations.\\

\section{Introduction}\label{sec:Intro}

Let $\mathdefn{[n]} \coloneqq \{1, \dots, n\}$ for any positive integer $n$.
Fix integers $d\geq 1$ and $k\geq 0$.
We denote by $\mathdefn{\b e_1}, \dots, \mathdefn{\b e_d}$ the standard basis vectors of $\R^d$, and by $\mathdefn{\b 0} \coloneqq (0, \ldots, 0)$ the zero-vector of $\R^d$.
A \defn{lattice polytope} is a polytope whose vertices lie in $\Z^d$ (\ie are \defn{lattice points}).

We denote by $\mathdefn{\mathscr{P}_d(k)}$ the family of all $d$-dimensional lattice polytopes in $\R^d$ with exactly $k$ interior lattice points, and by $\mathdefn{\mathscr{S}_d(k)}$ the family of all simplices in $\mathscr{P}_d(k)$.
It is known that, for every fixed $k \geq 1$, the volume of a polytope in $\mathscr{P}_d(k)$ is bounded from above (see \cite{hensley1983lattice}), and in recent years many researchers have worked to determine optimal bounds, \eg \cite{averkov2015largest, averkov2020lattice, averkov2020local, Balletti_dual_volume}.
Note that, for $k = 0$, the volume of the simplices in $\mathscr S_d(0)$ (and hence of the polytopes in $\mathscr P_d(0)$) is not bounded, since the famous Reeve simplices \cite{Reeve1957} have no interior lattice points but can have arbitrarily large volume.

In \cite{wills1982lattice}, Wills, Zaks and Perles define the $d$-dimensional simplex
\begin{equation*}\label{eq:sylvester_definition}
\mathdefn{\Sylv} \coloneqq \conv (\b 0,\, s_1\b e_1,\, \ldots,\, s_{d-1}\b e_{d-1},\, (k+1)(s_d - 1)\b e_d), \text{ with }  s_{0}=1, s_{i}= 1+ \prod_{j=0}^{i-1}s_j, \text{ for } i\geq 1.
\end{equation*}
The sequence $ ( s_n )_{n\in \mathbb{N}} $ is called  \defn{Sylvester sequence}.
Its first terms are $1, 2, 3, 7, 43, 1807$, see \OEIS{A000058}.
In this work, we will refer to the simplices $\Sylv$ as \defn{Sylvester simplices}: the same simplices also appear under the names \defn{Zaks--Perles--Wills simplices} \cite{wills1982lattice,averkov2020local},  \defn{Hensley simplices} \cite{hensley1983lattice,averkov2015largest}, or \defn{Lagarias--Ziegler simplices} \cite{lagarias1991bounds}. %
In addition, the facet of the Sylvester simplex opposite to the vertex $\b e_d$ is the \defn{Sylvester base}, \ie $\mathdefn{\Sylv[-1]} \coloneqq  \conv (\b 0,\, s_1\b e_1,\, \ldots,\, s_{d-1}\b e_{d-1})$.

The set of interior lattice points of the simplex $\Sylv$ is $ \left\{ (1,\dots,1,i) \mid i \in [k] \right\} $, see \Cref{prop:SylvesterSimplexBasicFacts}~\eqref{item:SylvInteriorLatticePoints}.
Its volume is given by:

\begin{equation*}
\vol(\Sylv) = \dfrac{1}{d!}(k+1)(s_d-1)\prod_{j<d} s_j = \dfrac{1}{d!}(k+1)(s_d-1)^2.
\end{equation*}

The connection between $\Sylv$ and the bounded volume for $\mathscr{S}_d(k)$ and $\mathscr{P}_d(k)$ is given by the following conjecture of Balletti and Kasprzyk.

\begin{conjecture}[{\cite[Conjecture~1.5]{balletti2016three}}]\label{conj:volume}
Let $d\geq 3$ and $k\geq 1$.
Then for every $\pol\in\mathscr{P}_d(k)$
$$ \vol(\pol) \leq \vol(\Sylv).    
$$
Moreover, $\Sylv$ is the \emph{only} polytope achieving this upper bound except when $d = 3$ and $k = 1$. 
\end{conjecture}

For $k=1$, this conjecture has been proven when $\pol$ is a simplex by Averkov, Kr\"umpelmann and Nill \cite[Theorem~2.2]{averkov2015largest}, and when $\pol$ is reflexive by Balletti, Kasprzyk and Nill \cite[Corollary~1.2]{Balletti_dual_volume}.
Moreover, this conjecture holds true for arbitrary $3$-dimensional lattice polytopes by Kasprzyk’s classification of canonical Fano threefolds \cite[Theorem~4.2]{Kasprzyk} (see also \cite[Section~1.1]{Balletti_dual_volume}).
For arbitrary lattice polytopes in dimensions $d\geq 4$, the conjecture remains open.
Balletti, Kasprzyk and Nill proved the corresponding sharp upper bound for the volume of the dual polytope. 
Moreover, Averkov, Kr\"umpelmann and Nill proved a nearly optimal bound on the volume of simplices in $\mathscr S_d(k)$ in \cite[Theorem~1.2]{averkov2020lattice}, namely $(d+1)\cdot \vol(\Sylv)$.

\medskip

Generally, research around the Sylvester simplex has focused on its volume.
Yet, Balletti \cite[Section~8]{balletti2021enumeration} showed by enumerating all $3$-dimensional simplices with at most $11$ interior lattice points that other Ehrhart-theoretic quantities attain their maxima over $\mathscr S_3(k)$ or $\mathscr P_3(k)$ exactly for the Sylvester simplex $\Sylv[k][3]$, if $1\leq k\leq 11$.

\begin{conjecture}\label{conj:hstar}
For $d\geq 3$ and $k\geq 1$, all $\pol\in\mathscr{P}_d(k)$ satisfy
$$h^\ast_i(\pol) \leq h^\ast_i(\Sylv) ~~ \text{ for all }~~ i=1,\dots,d.$$
Moreover, $\Sylv$ is the \emph{only} polytope achieving these upper bounds at once except if $d = 3$, $k = 1$.
\end{conjecture}

The $3$-dimensional case appears in \cite[Conjecture~6.1]{balletti2016three} and in a refined version in \cite[Conjecture~8.7]{balletti2021enumeration}.
Note that \Cref{conj:hstar} implies \Cref{conj:volume}, since $\vol(\pol) = \frac{1}{d!}\sum_i h^\ast_i(\pol)$.

It is worth commenting on the case $d=2$, which, following \cite{balletti2016three}, we exclude from the statement of the conjecture.
In dimension $2$, Scott's classification \cite{scott1976convex} of possible $h^\ast$-vectors implies that $h^\ast_1 \le 3h^\ast_2 + 3$ whenever $h^\ast_2 \geq 2$.
The Sylvester triangle $\conv\{(0,0), (2,0), (0,2\cdot (k+1))\}$ satisfies $h^\ast_2=k$ and $h^\ast_1=3k+3$, so for $k\ge2$ it attains this upper bound with equality.
Two caveats are worth pointing out.
First, the maximizer is not unique: the rectangle with vertices $(0,0)$, $(2,0)$, $(2,k+1)$, $(0,k+1)$ has the same $h^\ast$-vector, $h^\ast_2=k$ and $h^\ast_1=3k+3$.
Second, the case $h^\ast_2=1$ behaves differently: the true maximum for $h^\ast_1$ there is $7$, not $6$, and it is achieved by the triangle with vertices $(0,0)$, $(3,0)$, $(0,3)$.

But there may be a great clue we can extract from dimension $2$.
Haase and Schicho \cite{haase2009lattice} show that whenever $h^\ast_1$ is maximal for a given $h^\ast_2$, the interior lattice points must all lie on a single line.
The same property holds for every Sylvester simplex in higher dimensions, see \Cref{prop:SylvesterSimplexBasicFacts}~\eqref{item:SylvInteriorLatticePoints}, and it may be the key to the general pattern.

\medskip

After presenting the very basic properties of the Sylvester simplex in \Cref{sec:PrelimAndBasicFacts}, we will present two ways of triangulating the Sylvester simplices in \Cref{sec:DissectingSylvesterSimplex}.
Notably, we prove:

\begin{thmUniv}[{\Cref{thm:fruTriangulationSylvesterSimplices}}]\label{thmA}
The Sylvester simplex $\Sylv$ admits a flag, regular and unimodular triangulation for all $d \geq 1$ and $k \geq 0$.
\end{thmUniv}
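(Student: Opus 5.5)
The plan is to build the triangulation by induction on $d$, using the recursion $s_d-1=s_{d-1}(s_{d-1}-1)$ of the Sylvester sequence. The goal is a \emph{dissection} of $\Sylv$ by a small number of lattice hyperplanes into pieces that are each unimodularly equivalent to one of two things: a lower-dimensional Sylvester simplex (or a prism/pyramid over one), or a polytope whose facet normals form a totally unimodular system, after a unimodular change of coordinates.

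First I will record the facet description
\[
\Sylv=\set{\b x\in\R^d_{\ge 0}}{\sum_{i<d}\frac{s_d-1}{s_i}\,x_i+\frac{x_d}{k+1}\le s_d-1}.
\]
For $k\ge 1$ I would cut with the hyperplanes $x_d=j(s_d-1)$, $j\in[k]$. I expect this to yield the lower piece $\Sylv[0]$ together with pieces that are unimodular images of one another. Since the interior points $(1,\dots,1,i)$ lie on the segment joining these slices, this should reduce everything to $k=0$ and to a handful of ``frustum'' pieces. If instead this cut introduces non-lattice vertices, I will replace it by a unimodular shear $x_d\mapsto x_d-(s_d-1)x_{d-1}\cdot(\text{const})$ adapted to the vertex $(k+1)(s_d-1)\b e_d$, and cut afterwards.

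For $k=0$ the main step is to slice by $x_{d-1}=s_{d-1}-1$. This separates a piece that projects onto $\Sylv[0][d-1]$ from a remaining piece near the vertex $s_{d-1}\b e_{d-1}$. The identity $s_d-1=s_{d-1}(s_{d-1}-1)$ should let me exhibit the first piece as a lattice pyramid or Cayley-type polytope over $\Sylv[0][d-1]$, whose heights are integral multiples compatible with a \emph{staircase} (alcoved-like) refinement. The second piece should have only $0/\pm1$-type facet normals after a unimodular transformation. The induction hypothesis gives a flag, regular, unimodular triangulation of $\Sylv[0][d-1]$. I will extend it to the pyramid pieces via a staircase/Cayley triangulation, which preserves unimodularity and flagness because no new minimal non-faces of size larger than $2$ are created. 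On the totally unimodular pieces I will use the canonical triangulation induced by the arrangement of all integer translates of the facet hyperplanes, i.e.\ the alcoved triangulation. That triangulation is unimodular, regular (it is induced by a convex piecewise-linear function given by a sum of distance functions to the hyperplanes) and flag.

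Compatibility on common walls is automatic if every piece is triangulated as the restriction of \emph{one} global hyperplane arrangement $\mathcal H$, together with one global refinement of the Cayley pieces. Regularity of the glued triangulation then follows by adding the height function of $\mathcal H$ (a sum of $|\ell_H(\b x)-c_H|$ terms) to a small generic perturbation inside the cells. Flagness follows because every simplex is determined by pairwise compatibility conditions with the arrangement. The step I expect to be the main obstacle is the $k=0$ slice: I need to verify that the piece containing the long facet really becomes totally unimodular (or a Cayley polytope of unimodularly triangulable factors) after a unimodular map, and that its triangulation agrees with the inductive one on the wall $x_{d-1}=s_{d-1}-1$. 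If this fails, I would first try replacing the single cut by the family $x_{d-1}=c$ for $c=0,\dots,s_{d-1}-1$, making each slab a lattice prism-like Cayley polytope.
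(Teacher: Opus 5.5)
Your dissection breaks down at its first concrete step: the hyperplanes you cut along create non-lattice vertices, so the pieces are not even lattice polytopes.

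\textbf{The $k=0$ cut.} The hyperplane $x_{d-1}=s_{d-1}-1$ meets the edge from $s_{d-1}\b e_{d-1}$ to $s_i\b e_i$ ($1\le i\le d-2$) in the point $\frac{s_i}{s_{d-1}}\b e_i+(s_{d-1}-1)\b e_{d-1}$. For $d=3$ this is $(\tfrac23,2,0)\in\conv(\b 0,2\b e_1,3\b e_2,6\b e_3)$. This point violates $\sum_{i\le d-2}x_i/s_i+x_{d-1}/(s_{d-1}-1)\le 1$, so the lower piece does not project onto $\Sylv[0][d-1]$ either. Your fallback family $x_{d-1}=c$ has the same defect.

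\textbf{The $k\ge1$ cut.} The hyperplane $x_d=s_d-1$ meets the edge from $s_i\b e_i$ to the apex in $\frac{k}{k+1}s_i\b e_i+(s_d-1)\b e_d$. This is non-lattice whenever $(k+1)\nmid s_i$ (e.g.\ $k=1$, $i=2$, $d\ge 3$). Moreover, the lower slab strictly contains $\Sylv[0]$. The nested simplices $\Sylv[p]$ share the vertices $s_i\b e_i$ and are separated by the tilted hyperplanes $\sum_{i<d}\frac{s_d-1}{s_i}x_i+\frac{x_d}{p+1}=s_d-1$, not by horizontal ones. Even with these correct layers, which are unimodular copies of $\Sylv[0]$ by \Cref{lem:UnimodularEquivalenceLayering}, two things would remain. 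First, the triangulations of consecutive layers must agree on the common facet, which is the image of $\Sylv[-1]$ on one side and of the facet opposite $\b 0$ on the other. Second, you would need a separate argument that the glued complex is flag and regular.

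\textbf{The fallback tools.} They do not supply this. Integer translates of a totally unimodular family of hyperplanes cut space into lattice cells that need not be simplices; normals $\b e_1,\b e_2$ give unit squares. So the arrangement itself triangulates only in the alcoved (type-$A$) case. The induction hypothesis gives you some flag, regular, unimodular triangulation of $\Sylv[0][d-1]$, not one cut out by a global arrangement $\mathcal H$, so agreement on the walls is not automatic. Finally, ``pairwise compatibility'' is not a proof of flagness.

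\textbf{The missing idea.} What is missing is where the wall goes, and which operation handles the $x_d$-direction.
\begin{itemize}
\item Work inside the base $\Sylv[-1]=\conv(\b 0,s_1\b e_1,\dots,s_{d-1}\b e_{d-1})$. Cut along the facet $\polF_{d-1}$ of $\Sylv[0][d-1]$ opposite $\b 0$, i.e.\ along $\sum_{i\le d-2}\frac{s_{d-1}-1}{s_i}x_i+x_{d-1}=s_{d-1}-1$, not along $x_{d-1}=s_{d-1}-1$.
\item This splits $\Sylv[-1]$ into $\Sylv[0][d-1]\times\{0\}$ and the pyramid $\conv(\polF_{d-1}\cup\{s_{d-1}\b e_{d-1}\})$. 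The apex has lattice distance $1$ from $\polF_{d-1}$.
\item Thus $\Sylv[-1]$ is a one-point extension of $\Sylv[0][d-1]$ in which only $\polF_{d-1}$ and its faces are visible. Coning the induced triangulation of $\polF_{d-1}$ preserves flagness and regularity \cite[Thm.~2.7]{CaicedoJuhnkePoullot}, and it preserves unimodularity because the height is one.
\item No slicing in $x_d$ and no reduction to $k=0$ is then needed. We have $\Sylv=\Chim(\Sylv[-1],0,u)$, with $u$ integral because $s_i\mid s_d-1$. The chimney theorem \cite[Thm.~2.8 \& Cor.~2.9]{Unimodular_triangulation_existence} transfers the flag, regular, unimodular triangulation from $\Sylv[-1]$ to $\Sylv$ for all $k\ge 0$ at once, which closes the induction.
\end{itemize}
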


This implies that the simplex $\Sylv$ has the \emph{integer decomposition property} (IDP), among others.
Since the triangulation from \Cref{thmA} is not really explicit (as it relies on Chimney polytopes and one-point extensions),  we propose a second (non-unimodular) triangulation of $\Sylv$ into unimodularly equivalent simplices (\Cref{lem:UnimodularEquivalenceLayering}). We refer to this second triangulation as the \defn{layering triangulation} of $\Sylv$.
As a consequence we obtain the following result.

\begin{thmUniv}[{\Cref{thm:HaveAffineCoefficients}}]\label{thmB}
The coefficients $h^\ast_i(\Sylv)$ are affine functions in $k$.
\end{thmUniv}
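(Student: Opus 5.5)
The plan is to slice $\Sylv$ along the $\b e_d$-direction into $k+1$ layers and to show that every layer, read modulo its lower boundary, has the same $h^\ast$-contribution. Write $N \coloneqq s_d - 1$, so that the last coordinate of $\Sylv$ ranges over $[0,(k+1)N]$. Cutting at the hyperplanes $x_d = jN$ for $j = 1,\dots,k$ gives a dissection of $\Sylv$ (the layering triangulation) into a top simplex $\Sk[0]$, translated by $kN\b e_d$, and $k$ frusta. Each frustum is then triangulated in the standard way into simplices, following \Cref{lem:UnimodularEquivalenceLayering}. The key fact to extract from that lemma is that the pieces in layer $j$ are images of the pieces in layer $1$ under a lattice-preserving affine map. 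Concretely, the map is a shear fixing the base directions composed with the translation by $N\b e_d$, and it is a map of $\Z^d$ because $s_{i}$ divides $N$ for all $i<d$, since $N = \prod_{i<d} s_i$.

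The Ehrhart series is then computed as a half-open decomposition. Write $\Sylv$ as the disjoint union of $\Sk[0]$ and of $k$ half-open layers $L_j$, each of which excludes its lower facet $\{x_d=(j-1)N\}$ (or excludes the upper one, whichever direction makes the counts consistent). The unimodular equivalence shows that all the $L_j$ are lattice-equivalent as half-open polytopes, so they share a common $h^\ast$-polynomial $g(t)$, as half-open $h^\ast$-polynomials are additive under disjoint half-open decompositions. This yields
\[
\hstarPoly[\Sylv] = \hstarPoly[{\Sk[0]}] + k\, g(t).
\]
It follows that each coefficient is $a_i + k b_i$ with $a_i, b_i$ independent of $k$, which is exactly the claim.

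The main obstacle is justifying that the half-open pieces are really congruent, including their boundaries. The layers are not translates of one another, because the simplex tapers: the cross-section at height $x_d$ is the base scaled by $1 - x_d/((k+1)N)$, so the layers shrink as one moves up. I therefore expect that the literal slicing has to be replaced by the simplices from \Cref{lem:UnimodularEquivalenceLayering}. That is, write $\Sylv$ as the union of $k+1$ simplices that are unimodularly equivalent, for instance $\conv(\b 0, s_1\b e_1,\dots,s_{d-1}\b e_{d-1}, jN\b e_d, (j+1)N\b e_d)$ restricted suitably, or better the simplices obtained by coning the segment $[jN\b e_d,(j+1)N\b e_d]$ over the base facet. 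These cone-pieces $\conv(\Base, jN\b e_d, (j+1)N\b e_d)$ do form a triangulation of $\Sylv$, since $\Sylv$ is the join of $\Base$ with the segment $[\b 0, (k+1)N\b e_d]$, and the apex-subdivided segment induces this triangulation. Each piece is the image of the $j=0$ piece under the shear $x \mapsto x + jN\,\lambda(x)\,\b e_d$, where $\lambda$ is the affine functional that equals $1$ on the base vertices and $0$ on the segment. To make this map integral I would use $\lambda(x) = 1 - x_d/N$ composed appropriately, and check integrality via $s_i \mid N$.

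Finally, I would apply a half-open decomposition (Köppe–Verdoolaege) with a generic point. This assigns to every piece except the first the same excluded facet type, namely the facet shared with the previous piece. Under the unimodular maps these excluded facets correspond to one another, so all pieces except the first contribute the same half-open $h^\ast$-polynomial $g(t)$. Together with the first piece this gives the affine formula above.
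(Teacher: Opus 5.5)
Your eventual strategy is the paper's. You write $\Sylv=\Sylv[0]\sqcup\bigsqcup_{j=1}^{k}(\Sylv[j]\ssm\Sylv[j-1])$, identify every half-open piece with $\Sylv[0]\ssm\Base$, and conclude $h^\ast(\Sylv;t)=h^\ast(\Sylv[0];t)+k\,h^\ast(\Sylv[0]\ssm\Base;t)$. However, the concrete objects you write down are wrong in three places, and as stated the argument does not go through.

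(i) The horizontal slicing in your first two paragraphs must be dropped entirely, not patched. The section of $\Sylv$ at $x_d=jN$ is $\frac{k+1-j}{k+1}\Base$ shifted to height $jN$. So the slabs are not lattice polytopes, and the top piece is not a translate of $\Sylv[0]$.

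(ii) Since $\b 0\in\Base$, your cone-pieces are $\conv(\Base,jN\b e_d,(j+1)N\b e_d)=\Sylv[j]$. These are nested, not a triangulation, and a $(d-1)$-simplex joined with a segment would be $(d+1)$-dimensional. You must replace $\Base$ by its facet $\conv(s_1\b e_1,\dots,s_{d-1}\b e_{d-1})$ opposite $\b 0$. The pieces are then $\overline{\Sylv[j]\ssm\Sylv[j-1]}$, and the nesting $\Sylv[0]\subset\Sylv[1]\subset\cdots\subset\Sylv$ gives the half-open decomposition directly, with no generic point needed. If you do use K\"oppe--Verdoolaege, the point must lie in the interior of $\Sylv[0]$ to get the pattern you claim.

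(iii) The map is also wrong. With $\lambda$ equal to $1$ on the $s_i\b e_i$ and $0$ on the $x_d$-axis, $x\mapsto x+jN\lambda(x)\b e_d$ fixes the axis and lifts the base vertices, which is the opposite of what is needed. Your fallback $\lambda(x)=1-x_d/N$ gives $x_d\mapsto(1-j)x_d+jN$, which has determinant $1-j$ and still moves the $s_i\b e_i$. The correct map is
\[
x\longmapsto x+j\Bigl(N-\sum_{i<d}\tfrac{N}{s_i}x_i\Bigr)\b e_d .
\]
It is integral because $s_i\mid N$, has determinant $1$, and fixes each $s_i\b e_i$. It sends $\b 0\mapsto jN\b e_d$ and $N\b e_d\mapsto (j+1)N\b e_d$, so it carries $\Base$ onto the removed facet $\conv(s_1\b e_1,\dots,s_{d-1}\b e_{d-1},jN\b e_d)$. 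This is the matrix $U$ of \Cref{lem:UnimodularEquivalenceLayering} with $k$ replaced by $j$.

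With these corrections your argument coincides with the paper's proof. Alternatively, you can simply invoke \Cref{lem:UnimodularEquivalenceLayering}, which states exactly $\Sylv[j]\ssm\Sylv[j-1]\simeq_{\mathbb Z}\Sylv[0]\ssm\Base$ for every $j\geq 1$, and combine it with additivity of half-open $h^\ast$-polynomials.
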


The same holds for the main Ehrhart-theoretic quantities, \eg the number of lattice points of~$\Sylv$, the coefficients of its $f^\ast$-vector and its Ehrhart polynomial (whether expressed in the magic basis or not) and their counterparts for local or boundary $h^\ast$- and $f^\ast$-vectors.
This affine structure allows us to determine the $h^\ast$-vector of $\Sylv$ for all $k\geq 0$ just by computing it for $k \in \{0,1\}$, which we present in \Cref{ssec:example_small_dimension} and in \Cref{tab:LatticePointsNbr,tab:ehrhart_coefficients,tab:magic_coefficients,tab:hstar_coefficients,tab:local_h_coefficients,tab:boundary_h_coefficients,tab:f_star_coefficients} for dimensions $d \leq 6$, and in Appendix \ref{ssec:TablesD7} for $d = 7$.

In \Cref{sec:fStarPolynomialsSylvesterSimplices}, we turn our attention to the $f^\ast$-vectors of $\Sylv$, that is, counting the number of simplices of each dimension in a unimodular triangulation of $\Sylv$.
We show that the $f^\ast$-vectors of $\Sylv$ satisfy a surprising recursive formula on $d$ (\Cref{thm:f_star_induction}), and extract the number of simplices of dimensions $d$, $d-1$ and $d-2$ of any unimodular triangulation of $\Sylv$ (\Cref{prop:fStarExplicitValues}).

Last but not least, in \Cref{sec:Computing_hStar}, we detail another way of computing the $h^\ast$-polynomial of $\Sylv$ (\Cref{thm:hStarSylvester}), relying on the fact that it is a \emph{diagonal simplex}, \ie its vertices are $\b 0$ and points lying on the coordinate axes.
We derive similar expressions for the local $h^\ast$-vectors and the boundary $h^\ast$-vectors (\Cref{prop:local_h_star_symmetry,prop:boundaryhStarSylvester}).
Then, using Ehrhart--McDonald reciprocity, we prove an unusual symmetry for the entries of the $h^\ast$-vector of $\Sylv$ (\Cref{prop:SymmetryhStar}), showing:

\begin{thmUniv}[{\Cref{thm:hStarUnimodal}}]\label{thmC}
For $d \geq 1$ and $k \geq 0$, the coefficients $h_i^\ast(\Sylv)$ form a unimodal sequence with mode at $\frac{d}{2}$ if $d$ is even, and with mode at $\frac{d-1}{2}$ or $\frac{d+1}{2}$ if $d$ is odd.
\end{thmUniv}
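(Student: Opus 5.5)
We plan to prove the unimodality of $h^\ast(\Sylv)$ by combining the explicit description of $h^\ast(\Sylv;t)$ as a diagonal simplex (\Cref{thm:hStarSylvester}) with the symmetry of \Cref{prop:SymmetryhStar}, which comes from Ehrhart--Macdonald reciprocity. We treat the two halves of the sequence differently. The symmetry should transfer the increasing part below the middle into the decreasing part above it. So the real content is a monotonicity statement $h^\ast_i(\Sylv)\le h^\ast_{i+1}(\Sylv)$ for $i<\lfloor d/2\rfloor$ (and the analogue at the middle when $d$ is odd).

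\textbf{Setup.} For a diagonal simplex with vertices $\b 0$ and $a_j\b e_j$, the lattice points of the fundamental parallelepiped of the cone over $\Sylv$ correspond to tuples $(x_1,\dots,x_d)$ with $0\le x_j<a_j$. Such a tuple has height
\[
  \Bigl\lceil \textstyle\sum_j x_j/a_j\Bigr\rceil,
\]
where $a_j=s_j$ for $j<d$ and $a_d=(k+1)(s_d-1)$. Thus $h^\ast_i$ counts tuples of height $i$. The Egyptian-fraction identity
\[
  \sum_{j<d}\frac{1}{s_j}+\frac{1}{s_d-1}=1
\]
is the arithmetic fact behind \Cref{prop:SymmetryhStar}. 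Under the involution $x_j\mapsto a_j-1-x_j$ on the coordinates with $x_j\neq 0$, the height $i$ is sent to roughly $(\#\text{support})-i$, up to a correction governed by this identity. I will take the precise symmetry from \Cref{prop:SymmetryhStar} as given. By \Cref{thm:HaveAffineCoefficients} each $h^\ast_i$ is affine in $k$, so it suffices to prove the needed inequalities for $k=0$ and to check that the slopes in $k$ satisfy the same inequalities.

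\textbf{Increasing half.} I would split the parallelepiped points by their support, that is, by the face $F$ of $\Sylv$ containing the corresponding ray. This gives the box-polynomial decomposition $h^\ast(\Sylv;t)=\sum_F B_F(t)$, where each $B_F$ is palindromic and centred at $(\dim F+1)/2$. Faces of $\Sylv$ are again diagonal simplices of Sylvester type (sub-Egyptian sums).

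For small $i$, I would then build an explicit injection from height-$i$ tuples to height-$(i+1)$ tuples. The candidate map increases the coordinate $x_1\in\{0,1\}$ (since $s_1=2$), or more generally the first coordinate that can be raised without overshooting. The aim is to show that while $i<d/2$ the height strictly increases by one and the map is injective. If the direct injection fails at the boundary, a fallback is to use the regular flag unimodular triangulation of \Cref{thm:fruTriangulationSylvesterSimplices}. There $h^\ast$ equals the $h$-vector of a regular triangulation of a ball, and one can try to write it as the $h$-vector of the boundary sphere plus a nonnegative correction, exploiting that all interior lattice points lie on a line.

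\textbf{Main obstacle.} I expect the main difficulty to be the middle inequality when $d$ is odd, and handling $h^\ast_{\lfloor d/2\rfloor}$ versus $h^\ast_{\lceil d/2\rceil}$ in general. There the symmetry of \Cref{prop:SymmetryhStar} is not an exact palindrome, so the two candidate modes must be compared directly. Here I would use the explicit count from \Cref{thm:hStarSylvester}. The height-$i$ tuples split according to whether $x_d$ is $0$, which gives the contribution of the Sylvester base $\Base$, or nonzero, which gives a contribution linear in $k$. This reduces the comparison to the base $\Base$ together with induction on $d$, using that $\Base$ has the same form with $d$ replaced by $d-1$.
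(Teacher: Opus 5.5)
\textbf{There is a genuine gap: you give no working argument for the lower half, which is the step your plan rests on.}

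Your reduction is correct. Since $h^\ast_i(\Sylv)=a_{d,i}k+b_{d,i}$ with $a_{d,i}=b_{d,d-i}$ (\Cref{thm:HaveAffineCoefficients}, \Cref{cor:Symmetry_h_star}), the lower half being nondecreasing for all $k\ge 0$ is equivalent to the upper half being nonincreasing for all $k\ge 0$. Either one is equivalent to the whole theorem. The problem is that you then chose to prove the lower half, and none of your proposed mechanisms does so.

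\begin{itemize}
\item The injection fails as stated. Raising $x_1$ from $0$ to $1$ adds $1/2$ to $\sum_j x_j/a_j$. This leaves the height $\lceil\sum_j x_j/a_j\rceil$ unchanged whenever the fractional part of the sum lies in $(0,1/2]$. The map is also undefined on tuples with $x_1=1$, and ``the first coordinate that can be raised without overshooting'' is never specified, let alone shown to be injective.
\item The box decomposition $h^\ast=\sum_F B_F$ does not give monotonicity either. A summand with $\dim F\le d-2$ is palindromic about $(\dim F+1)/2<d/2$, so such summands generally start decreasing before index $d/2$. Monotonicity of the sum would need a compensation argument that you do not supply.
\item Your fallback through the regular unimodular triangulation targets the wrong half. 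In that generality only the upper half is controlled, and the lower half can genuinely fail. Combined with the upper-half result below, Ferroni's non-unimodal example in the remark after \Cref{thm:hStarUnimodal} shows exactly this.
\item Your ``main obstacle'' is not one. For odd $d$ the statement allows either middle index as the mode, so $h^\ast_{(d-1)/2}$ and $h^\ast_{(d+1)/2}$ never need to be compared.
\end{itemize}

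The missing ingredient is the known upper-half result. By Athanasiadis (Hibi--Stanley), any lattice polytope with a regular unimodular triangulation satisfies $h^\ast_{\lfloor (d+1)/2\rfloor}\ge\dots\ge h^\ast_d$. \Cref{thm:fruTriangulationSylvesterSimplices} supplies such a triangulation of $\Sylv$ for every $k$.
\begin{itemize}
\item At $k=0$ this gives $b_{d,i}\ge b_{d,i+1}$ for $i\ge\lfloor (d+1)/2\rfloor$.
\item Letting $k\to\infty$ gives $a_{d,i}\ge a_{d,i+1}$ for the same range.
\item The symmetry $a_{d,i}=b_{d,d-i}$ turns these into $b_{d,i}\le b_{d,i+1}$ and $a_{d,i}\le a_{d,i+1}$ for $i\le\lfloor (d-2)/2\rfloor$.
\end{itemize}
So both coefficient sequences are unimodal with the stated modes, and hence so is $a_{d,i}k+b_{d,i}$.

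In short, run your symmetry transfer in the opposite direction. Start from the half that a general theorem gives for all $k$ simultaneously; its large-$k$ behaviour is exactly what yields the other half.
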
 

We conclude by discussing magic positivity (which implies both Ehrhart positivity and $h^\ast$-real-rootedness, see \cite[Figure 1]{FerroniHigashitani2024ExamplesCounterexamplesEhrhartTheory}) in \Cref{sec:MagicPositivity}.
We prove:

\begin{thmUniv}[{\Cref{coro:magic_positive,prop:NotMagicPositive}}]\label{thmD}
 For $1\leq d \leq 6$ and all $k \geq 0$ the Ehrhart polynomial $\Ehr(\Sylv; t)$ is magic positive. Hence, $\Sylv$ is Ehrhart positive and  $h^\ast(\Sylv; t)$ is real-rooted.

For any $k\geq 0$, the linear coefficient of the  Ehrhart polynomial $\Ehr(\Sylv[k][7]; t)$ is negative. In particular, $\Sylv[k][7]$ is  not magic positive.
\end{thmUniv}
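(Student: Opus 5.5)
The statement has two halves: magic positivity for $d\le 6$, and failure of magic positivity for $d=7$ via a negative linear Ehrhart coefficient. Both rest on \Cref{thm:HaveAffineCoefficients}.

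\textbf{Reduction to $k\in\{0,1\}$.} By \Cref{thm:HaveAffineCoefficients}, every coefficient of $\Ehr(\Sylv;t)$ is an affine function $a+bk$ of $k$. This holds in the monomial basis and in the magic basis. An affine function is nonnegative for all $k\ge 0$ exactly when $a\ge 0$ and $b\ge 0$. Both constants are read off from the values at $k=0$ and $k=1$: $a$ is the value at $k=0$, and $b$ is the value at $k=1$ minus the value at $k=0$.

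\textbf{Magic positivity for $d\le 6$.} I first compute $h^\ast(\Sylv;t)$ for $k\in\{0,1\}$ and $d\le 6$. These are finite lattice-point counts, which \Cref{thm:hStarSylvester} makes explicit; the results are tabulated in \Cref{tab:hstar_coefficients}. I then rewrite the Ehrhart polynomial in the magic basis $\{t^i(1+t)^{d-i}\}$ used in \cite{FerroniHigashitani2024ExamplesCounterexamplesEhrhartTheory}. Equivalently, I expand $h^\ast$ in the corresponding basis, which is a triangular change of basis with explicit binomial entries. This gives \Cref{tab:magic_coefficients}. It then suffices to check two things for each $d\le 6$:
\begin{itemize}
\item every magic coefficient is nonnegative at $k=0$;
\item every magic coefficient does not decrease from $k=0$ to $k=1$.
\end{itemize}
Once magic positivity is established, Ehrhart positivity and real-rootedness of $h^\ast$ follow from the implications cited in \cite[Figure 1]{FerroniHigashitani2024ExamplesCounterexamplesEhrhartTheory}.

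\textbf{Failure for $d=7$.} Write the linear Ehrhart coefficient of $\Sylv[k][7]$ as $c_1(k)=\alpha+\beta k$. I compute $h^\ast(\Sylv[k][7];t)$ for $k=0$ and $k=1$ using \Cref{thm:hStarSylvester}. This is computationally heavy, since the volume is roughly $(s_7-1)^2$, which is huge. The point of the diagonal-simplex formula is that it turns the count into a sum over residue classes that can actually be enumerated.

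The coefficient $c_1$ is a fixed linear combination of the $h^\ast_i$ with rational coefficients, obtained by expanding $\sum_i h^\ast_i\binom{t+d-i}{d}$. I then check that $\alpha<0$ and $\beta\le 0$. This gives $c_1(k)<0$ for all $k\ge 0$, so $\Sylv[k][7]$ is not Ehrhart positive and hence not magic positive.

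\textbf{Expected obstacles.} If $\beta>0$ occurred, the affine argument alone would fail for large $k$. I do not expect this: the $k$-dependence comes from stacking unimodular copies in the layering triangulation, and that stacking contributes a multiple of the Ehrhart data of a lower-dimensional piece with its own small linear coefficient. The real obstacle is the exact $d=7$ computation. If it is too costly, I would use the layering decomposition to derive a closed formula for $\beta$, and compute only the single case $k=0$ numerically.
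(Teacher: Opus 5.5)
Your proposal is essentially the paper's proof: affineness in $k$ (\Cref{lem:MagicCoeffAreAffine}) reduces everything to the computed constant and slope terms, which are nonnegative in \Cref{tab:magic_coefficients} for $d\le 6$ (Ehrhart positivity and real-rootedness then follow from \cite{Magic_Petter}), while for $d=7$ both the constant term and the slope of the linear Ehrhart coefficient are negative. Two side remarks are off but harmless: the change of basis to $\{t^i(1+t)^{d-i}\}$ is not triangular (for $d=2$ one has $\binom{t+2}{2}=(t+1)^2-\tfrac12 t(t+1)$ and $\binom{t}{2}=t^2-\tfrac12 t(t+1)$), and the stacked layers are full-dimensional copies of $\Sylv[0]\ssm\Base$, not lower-dimensional pieces --- by the reciprocity argument of \Cref{prop:SymmetryhStar} one has $\Ehr(\Sylv[1];t)-\Ehr(\Sylv[0];t)=(-1)^d\Ehr(\Sylv[0];-t-1)$, so the slope is determined by the $k=0$ data, which is exactly your fallback.
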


To motivate future research, we conjecture (\Cref{conj:MagicPositive}) that $\Ehr(\Sylv[k]; t)$ has a negative  coefficient for all $d \geq 7$ and hence is not magic positive for $d \geq 7$.
On the other hand, we also conjecture (\Cref{conj:hStarRealRooted}) that the $h^\ast$-polynomial of $\Sylv$ is real-rooted for all $d\geq 1$ and $k\geq 0$ which we proved for $d\leq 6$.

\section{Preliminaries}\label{sec:PrelimAndBasicFacts}

A sequence of non-negative integers $\b a = (a_1, \dots, a_s)$ is \defn{unimodal} if there exists $m\in[s]$, called the \defn{mode} of $\b a$, such that $(a_1, \dots, a_m)$ is  weakly increasing, while $(a_m, \dots, a_s)$ is weakly decreasing.
A notorious property is that if the polynomial $\sum_{i=1}^s a_i t^i$ is \defn{real-rooted} (\ie does not have any complex non-real root) and has non-negative coefficients, then $\b a$ is unimodal.
We refer to Br\"and\'en's handbook \cite{Branden2015-UnimodalityLogConcavityRealRootednessGammaPositivity} for all the ``well-known'' facts on unimodality and real-rootedness, and to \cite[Figure~1]{FerroniHigashitani2024ExamplesCounterexamplesEhrhartTheory} for a particularly pedagogical illustration of the importance of unimodality and real-rootedness in the study of Ehrhart-theoretic quantities.

\subsection{Lattice polytopes}

We now recall the usual key notions concerning lattice polytopes.

The \defn{normalized volume} of a full-dimensional polytope $\pol$ is $\mathdefn{\nVol(\pol)} \coloneqq (\dim\pol)! \cdot \vol(\pol)$ where $\vol(\pol)$ is its Euclidean volume.

A lattice polytope $\pol = \conv(\b v_1, \dots, \b v_n)$ is \defn{reflexive} if it has a unique interior lattice point~$\b p$, and if the polar polytope $(\pol-\b p)^{\triangle} \coloneqq \set{\b x\in \R^d}{\langle \b x, \b v_i - \b p\rangle \leq 1 \text{ for } i\in [n]}$ is a lattice polytope.

The \defn{primitive vector} associated to a vector $\b x = (x_1, \dots, x_d)\neq \b 0$ is the vector $\frac{1}{c}\b x$, where $ c= \gcd(|x_1|, \dots, |x_d|)$.
A polytope $\pol\subseteq\R^d$ is \defn{smooth} if, at each vertex, the primitive vectors of its adjacent edges form a $\Z$-basis of $\Z^d$.

Since we are working with lattice points and lattice polytopes, most of the quantities we care about are invariant under an automorphism of the lattice $ \mathbb{Z}^{d} $.
To make this more precise we denote a map $\varphi:\R^d\to\R^d$ of the form $\varphi(\b x)=U\b x+\b b$, where $U\in\Z^{d\times d}$ satisfies $\det(U)=\pm 1$ and $\b b\in\Z^d$, an \defn{affine unimodular transformation}. 
Two lattice polytopes $\pol,\polytope{Q}\subseteq\R^d$ are \defn{unimodularly equivalent}, denoted $\mathdefn{\pol \UniEq \polQ}$, if $\polytope{Q}=\varphi(\pol)$ for some affine unimodular transformation $\varphi$.
Since a unimodular transformation induces a bijection between the lattice points of its input and its output, it follows that if 
$\pol \UniEq \polQ$, then $|\pol \cap \mathbb{Z}^{d}| = |\polytope{Q} \cap \mathbb{Z}^{d}|$.

This invariance becomes particularly useful when a lattice polytope is decomposed into simpler pieces, leading naturally to the study of triangulations.
\begin{definition}
A \defn{triangulation} of a $d$-polytope $\pol$ is a finite collection $\c T$ of $d$-simplices satisfying
\begin{itemize}
\item $\pol = \bigcup_{\simplex\in \c T}\simplex$, and
\item for all $\simplex_i,\simplex_j \in \c T$, the intersection $\simplex_i \cap \simplex_j$ is a face of both $\simplex_i$ and $\simplex_j$.
\end{itemize}

Let $V(\c T)$ be the collection of all the vertices of the simplices of $\c T$.
We say that a triangulation~$\c T$ is \defn{unimodular} if each $d$-simplex has normalized volume equal to~$1$;
\defn{flag} if $\c T$ is completely determined by its $1$-skeleton (\ie if for $X\subseteq V(\c T)$ if vertices in $X$ are pairwise linked by edges of simplices of $\c T$, then $\conv(X)$ is a face of some simplex of $\c T$);
and \defn{regular} if there exists a function\linebreak $\omega:V(\c T)\to\R$ such that the maximal simplices of $\c T$ are precisely the projections of the lower facets of $\conv\bigl((\b v,\omega(\b v))\mid \b v\in V(\c T)\bigr)\subseteq\R^{d+1}$ onto $\R^d$.

Flag, regular and unimodular triangulations are sometimes called \emph{quadratic triangulation}, see for instance \cite[Figure~1]{FerroniHigashitani2024ExamplesCounterexamplesEhrhartTheory} for additional background on this type of triangulations.
\end{definition}

\subsection{Ehrhart polynomials, \texorpdfstring{$h^\ast$}{h*}-polynomials and \texorpdfstring{$f^\ast$}{f*}-polynomials}\label{subs:fandh}
\label{ssec:ehrhart}

For a lattice polytope $\pol\subseteq \R^d$, let $\mathdefn{\Ehr(\pol; m)} \coloneqq \bigl| m\pol \cap \mathbb{Z}^d \bigr|$, where $\mathdefn{m\pol} \coloneqq \set{m\cdot \b p}{\b p \in \pol}$ denotes the $m\textsuperscript{th}$ dilation of $\pol$ for an integer $m\geq 0$.
Then $\Ehr(\pol; m)$ agrees with a polynomial function in $m$ of degree $\dim \pol$ with rational coefficients, called  \defn{Ehrhart polynomial} of $\pol$, see \cite{Ehrhart_poly}.
Furthermore, if two lattice polytopes are unimodularly equivalent, then they have the same Ehrhart polynomial.

\begin{definition}
The \defn{Ehrhart series} of a $d$-dimensional lattice polytope $ \pol $ is the generating function
\begin{equation*}\label{eq:ehrhart_series}
	\sum_{m\geq 0} \Ehr(\pol;m)\cdot t^m =\frac{h^{\ast}_0(\pol) + h^{\ast}_1(\pol) \cdot t + \cdots + h^{\ast}_d(\pol) \cdot t^d}{(1-t)^{d+1}} \,,
\end{equation*}
where $\mathdefn{h^\ast(\pol; t)} = h^{\ast}_0(\pol) + h^{\ast}_1(\pol) \cdot t + \cdots + h^{\ast}_d(\pol) \cdot t^d$ is called the \defn{$h^{\ast}$-polynomial}, and its vector of  coefficients $h^\ast(\pol)\coloneqq \bigl(h^\ast_0(\pol), \ldots, h^\ast_d(\pol)\bigr)$ is called the \defn{$h^{\ast}$-vector} of $\pol$.
\end{definition}

The $ h^{\ast}$-polynomial carries the same information as the Ehrhart polynomial, but turns out to have more amenable properties (see \cite[Sections~3.5 \& 3.6]{Beck_book}).
For every lattice polytope $\pol$:
\begin{itemize}
\item for each $ i \in [d] $, the coefficient $h^\ast_i(\pol)$ is a non-negative integer \cite{stanley1980decompositions}, and
\item we have $\nVol(\pol) = \sum_{i=0}^d h^\ast_i(\pol)$, and
\item the first entries are  $h^\ast_0(\pol) = 1$ and $h^\ast_1(\pol) = |\pol \cap \mathbb{Z}^d| - d - 1$, and
\item the last entry is the number of interior lattice points of $\pol$, \ie $ h^\ast_d(\pol) = |\pol^\circ \cap \mathbb{Z}^d|$.
\end{itemize}

When $\pol[S] = \conv(\b v_0, \ldots, \b v_d)\subseteq \mathbb{R}^d$ is a lattice $d$-simplex, its $h^\ast$-vector admits a beautiful geometric interpretation.
Indeed, let its associated \defn{half-open fundamental parallelepiped} be:
\begin{equation}\label{eq:half_open_para}
	\mathdefn{\Pi^\circ_{\polS}} \coloneqq \set{\sum_{j=0}^d \lambda_j \begin{pmatrix} \b v_j \\ 1\end{pmatrix}}{0\leq \lambda_j < 1 \text{ for } 0\leq j\leq d} \,.
\end{equation}
It is well-known, see \cite[Corollary 3.11]{Beck_book}, that we have $h^\ast_i(\polS) = \left|\set{\b x\in\Pi^\circ_{\polS}\cap\mathbb{Z}^{d+1}}{x_{d+1}=i}\right|$, \ie the coefficient $h^\ast_i(\polS)$ is the number of lattice points in $\Pi^\circ_{\polS}$ whose last coordinate is $i$.
Following Breuer \cite{Felix_f_star}, the \defn{$f^\ast$-vector} of a $d$-dimensional lattice
polytope $\pol$ is the vector $\bigl(f_0^\ast(\pol),\ldots,f_d^\ast(\pol)\bigr)$ defined by
\[
  \Ehr(\pol;n)=\sum_{i=0}^d f_i^\ast(\pol)\binom{n-1}{i} \,.
\]
We also set $f_{-1}^\ast(\pol)=1$.  The $f^\ast$- and $h^\ast$-vector carry the same information and one can recover one from the other as follows (with the convention $h^\ast_{d+1}(\pol) = 0$):

\begin{align*}
  f_k^\ast(\pol)
  &=\sum_{j=0}^{k+1}\binom{d-j+1}{k-j+1}h_j^\ast(\pol)
  &&\text{for } 0\le k\le d,%
  \\
  h_k^\ast(\pol)
  &=\sum_{j=-1}^{k-1}(-1)^{k-j-1}
    \binom{d-j}{k-j-1}f_j^\ast(\pol)
  &&\text{for } 0\le k\le d.%
\end{align*}
Equivalently, 
\[
  (1+t)^{d+1}\cdot
  h^\ast\!\left(\pol;\frac{t}{1+t}\right)=\sum_{i=-1}^d f_i^\ast(\pol) \, t^{i+1}
\]
and the polynomial $f^\ast(\pol;t) = \sum_{i=-1}^d f_i^\ast(\pol)\,t^{i+1}$ is called \defn{$f^\ast$-polynomial} of $\pol$.

\newpage
Breuer proved that all $f_i^\ast(\pol)$ are non-negative integers \cite{Felix_f_star}.
Some further properties of the $f^\ast$-vectors are
\begin{itemize}
\item The function $ f^{\ast}_i(\pol) $ is a \defn{valuation} on polytopes: that is if $\pol\cup\polQ$ is a polytope, then $f^\ast_i(\pol\cup\polQ) + f^\ast_i(\pol\cap\polQ) = f^\ast_i(\pol) + f^\ast_i(\polQ)$ for any $-1\leq i\leq d$.
\item The first entries are  $f^\ast_{-1}(\pol) = 1$ and $f^\ast_0(\pol) = |\pol \cap \mathbb{Z}^d|$.
\item The last entry is $ f^\ast_d(\pol) = \nVol(\pol)$.
\end{itemize}

The valuation property is key as it will allow us to compute $ f^\ast(\pol; t)$ from a (unimodular) triangulation.

\begin{remark}\label{rmk:BekteMcMullen}
The existence of a unimodular triangulation has important consequences in the study of lattice polytopes.
By \cite[Theorem 1]{BetkeMcMullen1985-LatticePoints}, if $\c T$ is a unimodular triangulation of a lattice $d$-polytope $\pol$, then the $h^{\ast}$- and $f^{\ast}$-vectors coincide with the $h$- and $f$-vectors of the triangulation.
In particular, $f^\ast_i(\pol)$ is the number of $i$-dimensional simplices of any unimodular triangulation of~$\pol$.
\end{remark}

\begin{example}
Consider the $3$-dimensional cube $\pol[Cube]_3 = [0,1]^3$.
This cube has a unimodular triangulation:
$\c T = \left\{ \conv(\b 0, \b e_{\pi(1)}, \b e_{\pi(1)} + \b e_{\pi(2)}, \b e_{\pi(1)} + \b e_{\pi(2)} + \b e_{\pi(3)}) \mid \pi:[3]\to[3] \text{ a bijection}\right\}$.
By counting faces of $\c T$, we get that the $ f^{\ast}$-vector of $\pol[Cube]_3$ is $ (1,8,19,18,6)$.
For example, $ f^{\ast}_{3}(\pol[Cube]_3) = 6$ as there are six simplices in the triangulation $\c T$.
Since $\c T$ is unimodular, $6$ is also the normalized volume of $\pol[Cube]_3$.

The relation between $h^\ast(\pol[Cube]_3)$ and $f^\ast(\pol[Cube]_3)$ allows to determine the $ h^{\ast}$-vector of $\pol[Cube]_3$, namely $(1,4,1,0)$.
Notice that $ h^{\ast}_{3}(\pol[Cube]_3) = 0$ as $\pol[Cube]_3$ has no interior lattice points.
\end{example}
We refer to \cite{BetkeMcMullen1985-LatticePoints,Felix_f_star,BeckDeligeorgakiHlavacekValenciaPorras2024-fStarVectors} for a more detailed study of the $f^\ast$-polynomial.

\begin{remark}
Since the $f^\ast$-vector is a non-negative combination of the $h^\ast$-vector, it follows that \Cref{conj:hstar} would imply an analogous maximality for $f^\ast$-vectors among polytopes with a fixed number of interior points.
\end{remark}

In fact, from the expansion of $f^\ast$ in terms of $h^\ast$ it is not hard to establish an analogue of \Cref{conj:hstar}; that is, for polytopes with fixed volume, rather than interior points.

\begin{theorem}\label{thm:max_fstar}
Let $d \ge 1$ and $K \ge 1$ be integers, and let $\pol \subseteq \mathbb{R}^n$ be a lattice $d$-polytope with $f^\ast_d(\pol) = K$.
Then for every $-1 \le i \le d$ we have
\[
f^\ast_i(\pol) \leq f^\ast_i(\simplex_{d,K}) = \binom{d+1}{i+1} + (K-1)\cdot \binom{d}{i},
\]
where $\simplex_{d,K} \coloneqq \conv(\b 0, \b e_1, \dots, \b e_{d-1}, K \b e_d)$.
\end{theorem}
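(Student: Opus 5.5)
Since $f^\ast_i(\pol)=\sum_{j=0}^{i+1}\binom{d-j+1}{i-j+1}h^\ast_j(\pol)$ is a combination of the $h^\ast$-entries with non-negative coefficients, I will bound it by maximizing this linear form over all admissible $h^\ast$-vectors. The constraints I use are: $h^\ast_0=1$; $h^\ast_j\ge 0$ for all $j$ (Stanley); and $\sum_{j=0}^d h^\ast_j=\nVol(\pol)=f^\ast_d(\pol)=K$. Writing $c_j\coloneqq\binom{d-j+1}{i-j+1}$, this gives
\[
f^\ast_i(\pol)=c_0+\sum_{j=1}^{i+1}c_j h^\ast_j(\pol)\le \binom{d+1}{i+1}+\Bigl(\max_{1\le j\le i+1}c_j\Bigr)\sum_{j\ge1}h^\ast_j(\pol).
\]
Since $\sum_{j\ge 1}h^\ast_j=K-1$, it remains only to identify the largest coefficient.

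For $1\le j\le i+1$, the coefficient is $c_j=\binom{d-j+1}{i-j+1}=\binom{d-j+1}{d-i}$. Its lower index $d-i$ is fixed while its upper index $d-j+1$ decreases as $j$ grows, so $c_j$ is weakly decreasing in $j$. Hence the maximum is $c_1=\binom{d}{i}$, and we obtain $f^\ast_i(\pol)\le\binom{d+1}{i+1}+(K-1)\binom{d}{i}$. The boundary case $i=-1$ is trivial, since both sides equal $1$. For $i=d$ the bound reads $K\le 1+(K-1)$, which holds with equality and is consistent.

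It remains to show that $\simplex_{d,K}$ attains the bound, that is, that its $h^\ast$-vector is $(1,K-1,0,\dots,0)$. I will read this off the half-open fundamental parallelepiped \eqref{eq:half_open_para}. Its lattice points are $\frac{m}{K}(K\b e_d,1)+\frac{K-m}{K}(\b 0,1)$ for $m=0,\dots,K-1$, which equals $(m\b e_d,1)$ at height $1$ for $m\ge1$. This gives $h^\ast_0=1$ and $h^\ast_1=K-1$. Alternatively, one can note that $\simplex_{d,K}$ has $d+K$ lattice points and normalized volume $K$, which forces all of the volume $K-1$ beyond $h^\ast_0$ to sit in $h^\ast_1$. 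Substituting into the $f^\ast$ formula gives exactly $\binom{d+1}{i+1}+(K-1)\binom{d}{i}$.

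There is no real obstacle; the only point needing care is the monotonicity of $c_j$ in $j$ and keeping track of the index conventions, in particular the convention $h^\ast_{d+1}=0$ when $i=d$.
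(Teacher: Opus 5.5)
Your proof is correct and is essentially the paper's argument. You bound the non-negative combination $\sum_{j\ge1}\binom{d-j+1}{i-j+1}h^\ast_j$ by $\binom{d}{i}\sum_{j\ge1}h^\ast_j=\binom{d}{i}(K-1)$, and you check $h^\ast(\simplex_{d,K})=(1,K-1,0,\dots,0)$ from the lattice-point count $d+K$ and normalized volume $K$. One small slip in your parallelepiped alternative, which does not affect the conclusion: for $m=0$ the lattice point is the origin at height $0$ (with $\lambda_0=0$), not $\frac{K}{K}(\b 0,1)$, because $\lambda_0=1$ is excluded from $\Pi^\circ$.
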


\begin{proof}
Since $h^\ast_0(\pol) + \dots + h^\ast_d(\pol) = K$ and they are non-negative with $h^\ast_0(\pol) = 1$, we have that $h^\ast_1(\pol) + h^\ast_2(\pol) + \dots + h^\ast_d(\pol) \leq K - 1$.
Since $\binom{d}{i} \geq \binom{d-j+1}{i-j+1} $ for $j \geq 1$, we get for $0 \leq i \leq d-1$
\[
\begin{array}{rcl}
f_i^\ast(\pol)
&=& \sum_{j=0}^{i+1} \binom{d-j+1}{i-j+1}h_j^\ast(\pol) = \binom{d+1}{i+1} h^\ast_0(\pol) ~+~ \sum_{j=1}^{i+1} \binom{d-j+1}{i-j+1}h_j^\ast(\pol) \\
&\leq& \binom{d+1}{i+1} h^\ast_0(\pol) + \binom{d}{i} \bigl(h^\ast_1(\pol) + h^\ast_2(\pol) + \dots + h^\ast_d(\pol)\bigr) \\
&=& \binom{d+1}{i+1} + \binom{d}{i}\cdot (K-1)
\end{array}
 \]

Equality is achieved if and only if the only non-zero term in the sum $\sum_{j=1}^{i+1} \binom{d-j+1}{i-j+1}h_j^\ast(\pol)$ is the one for $j = 1$, that is if and only if $h^\ast_2(\pol) = \dots = h^\ast_d(\pol) = 0$ and $h^\ast_1(\pol) = K-1$.

It is straightforward to check that $\simplex_{d,K}$ has normalized volume $K$ and $d+K$ lattice points, hence its $h^\ast$-vector is indeed $(1, K-1, 0, \dots, 0)$.
\end{proof}

The maximizer $\simplex_{d, K}$ is not unique:
any polytope with the same $h^\ast$-vector is a maximizer too.
Batyrev and Nill \cite[Theorem 2.5]{batyrev2006multiples} classified all such polytopes, and curiously they are all simplices.
This is another meta-evidence that simplices are ``extremal'' polytopes.

\subsection{Sylvester simplices}

\medskip

We quickly review some easy properties of the Sylvester simplex.
We do not claim that these results are original (or difficult), but rather wish to gather these basic facts in a single place.

\newpage
We will first need the following straightforward properties of the Sylvester sequence:
\begin{compactitem}
\item the first terms are $1, 2, 3, 7, 43, 1\,807, 3\,263\,443$, see \OEIS{A000058} (where $s _0 = 1$),
\item for all $0\leq i < d$, the number $\frac{s_d-1}{s_i} = \prod_{j< d,\, j\ne i} s_j$ is a positive integer,
\item for all $d\geq 1$, we have: $\tfrac{1}{s_d-1} + \sum_{1\leq i < d} \tfrac{1}{s_i} = 1$.
\end{compactitem}

As the Sylvester simplex is not the only one to exhibit these properties, we define the following class of simplices.

\begin{definition}
A \defn{diagonal $d$-simplex} is a simplex $\simplex\subseteq\R^d$ of the form $\simplex = \conv(\b 0, a_1\b e_1, \dots, a_d\b e_d)$ for integers $d \geq 2$ and $0 < a_1 \leq \dots \leq a_d$.
A diagonal $d$-simplex is \defn{$k$-Egyptian} for $k \geq 0$ if:
\begin{equation}\label{eq:k_egyptian}
	(k+1) \text{ divides } a_d \,, \qquad a_{d-1} \leq \frac{a_d}{k+1} \,, \qquad \text{ and } \qquad \frac{k+1}{a_d} ~+~ \sum_{i=1}^{d-1} \frac{1}{a_i} = 1\,.
\end{equation}

For $d = 1$, by convention, we consider $[0, k+1]$ to be the only $k$-Egyptian diagonal $1$-simplex.
\end{definition}

The name ``Egyptian'' comes from the notion of \emph{Egyptian fractions} which are fractions of the form $\frac{1}{m}$ for some integer $m \geq 1$.
It is well-known since Sylvester's original article \cite{Sylvester1880-OnVulgarFractions} that the sum of the inverses of the first $d$ terms of the Sylvester sequence, \ie $\sum_{i = 1}^d \frac{1}{s_i}$, is the closest possible underestimate of $1$ by any sum of $d$ Egyptian fractions.

\begin{example}\label{ex:Egyptian}
As the properties of the Sylvester sequence indicate, the Sylvester simplex $\Sylv$ is a $k$-Egyptian diagonal $d$-simplex.

Moreover, the simplex $\conv\{\b 0, d\b e_1, \dots, d\b e_d\}$ is a $0$-Egyptian diagonal $d$-simplex for any $d \geq 1$. 

If $\conv(\b 0, a_1\b e_1, \dots, a_{d-1}\b e_{d-1}, a_d\b e_d)$ is $k$-Egyptian, then $\conv(\b 0, a_1\b e_1, \dots, a_{d-1}\b e_{d-1}, \frac{k+2}{k+1}a_d\b e_d)$ is $(k+1)$-Egyptian.
\end{example}

\begin{proposition}\label{prop:SylvesterSimplexBasicFacts}
For $d\geq 1$, $k\geq 0$, any $k$-Egyptian diagonal $d$-simplex $\simplex=\conv(\b 0, a_1\b e_1, \dots, a_d\b e_d)$  
\begin{compactenum}[(a)]

\item is $d$-dimensional and has vertices $\b0$ and $a_i\b e_i$ for $i\in [d]$,\label{item:SylvVertices}\label{item:SylvDimension}

\item is defined by the inequalities
$x_i \geq 0$ for all $i\in[d]$ and $\sum_{i=1}^{d}\frac{1}{a_i}x_i\le 1$,
\label{item:SylvInequalities}

\item has volume $\vol(\simplex) = \tfrac{1}{d!} a_1\cdot a_2\cdots a_d$, and normalized volume $\nVol(\simplex) = a_1\cdot a_2\cdots a_d$,\label{item:SylvVolume}
\item has exactly $k$ interior lattice points, namely $(1, \dots, 1, j)$ for $j \in [k]$, \label{item:SylvInteriorLatticePoints}

\item has a single lattice point in the relative interior of the facet opposing $\b 0$, namely $(1, \dots, 1, k+1)$,\label{item:SylvInteriorLatticePointsInFacets}

\item has a single lattice point in the relative interior of the facet opposing $a_d \b e_d$, namely $(1, \dots, 1, 0)$,\label{item:SylvInteriorLatticePointsInFacets2}

\item is a reflexive polytope if and only if $k=1$ and
$a_i$ divides $a_d$ for all $i\in[d-1]$,\label{item:SylvReflexiveK1}

\item is a smooth polytope if and only if $d=1$ or $\bigl(d \geq 2$, $k = 0$ and $a_i = d$ for all $i\in[d]\bigr)$.\label{item:SylvNeverSmooth}

\end{compactenum}
\end{proposition}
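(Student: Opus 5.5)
Parts (a)--(c) are immediate for a diagonal simplex with all $a_i>0$. The points $\b 0, a_1\b e_1,\dots,a_d\b e_d$ are affinely independent, so they are the vertices and the simplex is $d$-dimensional. The facet through all $a_i\b e_i$ lies on the hyperplane $\sum_i x_i/a_i = 1$, and the other facets lie on the coordinate hyperplanes; this gives the inequalities in (b). For (c) we use the determinant: the normalized volume is $\det(\operatorname{diag}(a_1,\dots,a_d)) = a_1\cdots a_d$.

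For (d)--(f) we work directly with the inequalities of (b).

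(d) An interior lattice point has every $x_i\geq 1$, so
\[\sum_{i<d} \tfrac{x_i}{a_i} \ge \sum_{i<d}\tfrac{1}{a_i} = 1-\tfrac{k+1}{a_d}.\]
Hence $x_d/a_d < (k+1)/a_d$, that is, $x_d\le k$. If some $x_i\ge 2$ with $i<d$, then the left-hand side increases by at least $1/a_i \ge 1/a_{d-1}\ge (k+1)/a_d$, using $a_i\le a_{d-1}\le a_d/(k+1)$. Strict interiority then fails. Conversely, every point $(1,\dots,1,j)$ with $1\le j\le k$ satisfies all inequalities strictly, since $\sum_{i<d}\tfrac{1}{a_i} + \tfrac{j}{a_d} = 1-\tfrac{k+1-j}{a_d} < 1$.

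(e) On the facet $\sum x_i/a_i=1$ with all $x_i\geq1$, the same argument forces $x_i=1$ for $i<d$, because any $x_i\ge2$ would overshoot. Then $x_d=k+1$ is forced.

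(f) On the facet $x_d=0$ with all other $x_i\ge 1$, the equality $\sum_{i<d}\tfrac1{a_i}+\tfrac{k+1}{a_d}=1$ shows that $(1,\dots,1,0)$ is strictly inside. Any $x_i\ge2$ gives a sum of at least $1-\tfrac{k+1}{a_d}+\tfrac1{a_i}\ge 1$, which is not strictly inside.

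For $d=1$, all of (a)--(f) are trivial checks on $[0,k+1]$.

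(g) Reflexivity requires exactly one interior point, so $k=1$ by (d), with interior point $\b p=(1,\dots,1,1)$. Translating by $-\b p$, the facet inequalities become $-x_i\le 1$ and $\sum_i x_i/a_i \le 1-\sum_i 1/a_i$. When $k=1$, the condition $\tfrac{2}{a_d}+\sum_{i<d}\tfrac1{a_i}=1$ gives $1-\sum_i 1/a_i = 1/a_d$, so the last inequality reads $\sum_i (a_d/a_i)x_i \le 1$. The polar is therefore a lattice polytope iff each $a_d/a_i$ is an integer, which is exactly the stated divisibility. Here I use the standard fact that $\pol-\b p$ is reflexive iff its facet normals, scaled to right-hand side $1$, are integral.

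(h) Smoothness: at the vertex $\b 0$ the primitive edge directions are $\b e_i$, which is fine. At the vertex $a_d\b e_d$ the edge directions are $a_i\b e_i - a_d\b e_d$ and $-\b e_d$; their primitive vectors have determinant $\pm\prod_{i<d} a_i/\gcd(a_i,a_d)$. Smoothness at all vertices requires, for every $j$, that $a_j/\gcd(a_i,a_j)=1$ for all $i$, i.e.\ all $a_i$ are equal to some common value $a$. The Egyptian equation then gives $(d-1)/a + (k+1)/a = 1$, so $a = d+k$. Moreover $a_d/(k+1)=a$ is an integer only if $k+1$ divides $a$, and the condition $a_{d-1}\le a_d/(k+1)$ gives $a\le a/(k+1)$, forcing $k=0$ and hence $a=d$. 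Conversely, the simplex $d\simplex_{\text{std}}$ is smooth, being a dilate of a unimodular simplex.

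The main care is in (g) and (h): computing the primitive edge vectors and handling the general vertex $a_j\b e_j$ uniformly. The determinant at a vertex $a_j \b e_j$ with $j<d$ is analogous, with the Egyptian conditions entering only through the final case analysis.
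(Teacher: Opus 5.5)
Your proof is correct and follows the paper's argument essentially item by item: the inequalities of (b) together with $1/a_i\ge 1/a_{d-1}\ge (k+1)/a_d$ for (d) and (f), the explicit polar $\conv\bigl(-\b e_1,\dots,-\b e_d,(a_d/a_1,\dots,a_d/a_{d-1},1)\bigr)$ for (g), and the edge determinants $\pm\prod_{i\ne j}a_i/\gcd(a_i,a_j)$ at the vertices $a_j\b e_j$ for (h), where you also supply the converse that the paper leaves implicit. The only real deviation is (e), where you rerun the direct overshoot argument instead of using the paper's trick: the paper embeds $\simplex$ into the $(k+1)$-Egyptian simplex with last vertex $\frac{k+2}{k+1}a_d\b e_d$ and then invokes (d). Both routes are fine, and the stray ``$a_d/(k+1)=a$'' in (h) should read $a/(k+1)$, though that sentence is not needed for your conclusion.
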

 
\begin{proof}
	We verify the properties one at a time.
\begin{compactenum}[(a)]
\item This holds by definition.

\item Clearly, for all $i\in [d]$, all the vertices of $\simplex$ except $a_i\b e_i$ satisfy the equality $x_i = 0$.
Besides, all vertices of $\simplex$ except $\b 0$ (\ie of the form $a_i\b e_i$) satisfy the equality $\sum_i \frac{1}{a_i}x_i = 1$.

\item Since the normalized volume of $\simplex$ is equal to $\left|\det\left(\begin{pmatrix}1\\\b 0\end{pmatrix}, \begin{pmatrix}1\\ a_1\b e_1\end{pmatrix},\dots, \begin{pmatrix}1\\ a_d \b e_d\end{pmatrix}\right)\right|$ and since the Euclidean volume is equal to the normalized volume divided by $d!$, the claim follows.

\item A lattice point is in the interior if it satisfies all the defining inequalities in  \eqref{item:SylvInequalities} strictly.
It is easy to check that for $j\in [k]$, the point $(1, \dots, 1, j)$ satisfies the conditions.
Conversely, if $(x_1, \dots, x_d)$ is an interior lattice point of $\simplex$, then we must have  $ x_{i} \geq 1 $ for $ i \in [d] $ and  $ \sum_{i} x_{i}/a_{i} < 1 $.
If the first $ d-1 $ entries are equal to one, then the last one can only be in $ [k] $.
It remains to show that none of the first entries can be greater than one.
Suppose $ x_{i} \geq 2 $ with $ i \in [d-1] $.
Then
\begin{equation*}\label{eq:auxilliary_interior_points}
	1 > \sum_{i=1}^d \frac{x_{i}}{a_{i}} \geq \frac{1}{a_{i}} + \sum_{i=1}^d \frac{1}{a_{i}} \geq \frac{(k+1)}{a_{d}} + \sum_{i=1}^d \frac{1}{a_{i}} \geq 1,
\end{equation*}
by the defining properties of $k$-Egyptian simplices \eqref{eq:k_egyptian}.
This contradiction shows that there are only $ k $ interior lattice points.

\item Consider the simplex $\simplex' = \conv(\b 0, a_1\b e_1, \dots, a_{d-1}\b e_{d-1}, \frac{k+2}{k+1}a_d\b e_d)$. It is $(k+1)$-Egyptian (cf. \Cref{ex:Egyptian}) and  $\simplex\subseteq \simplex'$.
In particular, by (d), the simplex $\simplex'$ contains one more interior lattice point than the simplex $\simplex$, namely $(1, \dots, 1, k+1)$.
By the facet description of \eqref{item:SylvInequalities}, this point lies on the facet $\polF$ of $\simplex$ opposite  the vertex $\b 0$.
Since the relative interior of $\polF$ is in the interior of $\simplex'$, this is the unique interior lattice point of $\polF$.
\item Consider the facet $\polF_d=\conv(0,a_1\b e_1,\ldots,a_{d-1}\b e_{d-1})$ opposite to the vertex $a_d\b e_d$. By (b), a lattice point $\b x=(x_1,\ldots,x_{d-1},x_d)$ lies in its relative interior if and only if $x_d=0$, $x_i\geq 1$ for $1\leq i\leq d-1$ and $\sum_{i=1}^{d-1}\frac{1}{a_i}x_i<1$. 
By \eqref{eq:k_egyptian}, we have $\sum_{i=1}^{d-1}\frac{1}{a_i}=1-\frac{k+1}{a_d}<1$, so the point $(1,\ldots,1,0)$ satisfies these conditions. 
If $\b x\neq(1,\ldots,1,0)$ lies in the relative interior of $\polF_d$, then $x_j\geq 2$ for some $1\leq j\leq d-1$. It follows from \eqref{eq:k_egyptian} that 
\[
\sum_{i=1}^{d-1}\frac{x_i}{a_i}\geq \sum_{i=1}^{d-1}\frac1{a_i}+\frac1{a_j}\geq 1-\frac{k+1}{a_d}+\frac{k+1}{a_d}
=1,
\]
which yields a contradiction. In particular, $(1,\ldots,1,0)$ is the unique lattice point in the relative interior of $\polF_d$. 

\item By \eqref{item:SylvInteriorLatticePoints}, if $k \ne 1$, then $\simplex$ does not have a unique interior lattice point, so it is not reflexive.
If $k = 1$, then the polar of $\simplex - (1, \dots, 1, 1)$ is the simplex $\conv\bigl(-\b e_1, \dots, -\b e_d, (\frac{a_d}{a_1}, \dots, \frac{a_d}{a_{d-1}}, 1)\bigr)$: the reader can use~\eqref{item:SylvInequalities}, or refer to \cite[Prop.~4.4]{Nill2007VolumeLatticePointsReflexiveSimplices}.
Hence, for $d\geq1$, $\simplex$ is reflexive if and only if the polar of $\simplex - (1, \dots, 1, 1)$ is a lattice simplex, that is if and only if $a_i$ divides $a_d$ for all $i\in[d-1]$.

\item The case $d = 1$ is trivial.
For $d \geq 2$, suppose that $\simplex$ is smooth. 
Then the primitive vectors of the edges adjacent to $a_i\b e_i$ form a basis of $\Z^d$ for all $i\in [d]$.
For $i = d$, this implies that the determinant of the following matrix is $\pm1$:
$$\begin{pmatrix}
\tfrac{a_1}{\gcd(a_1, a_d)} & & \\
& \ddots & & \\
& & \tfrac{a_{d-1}}{\gcd(a_{d-1}, a_d)} & \\
\tfrac{-a_d}{\gcd(a_1, a_d)} & \dots & \tfrac{-a_d}{\gcd(a_{d-1}, a_d)} & 1
\end{pmatrix}$$
Hence $\prod_{j < d} \tfrac{a_j}{\gcd(a_j, a_d)} = 1$, which yields $\tfrac{a_j}{\gcd(a_j, a_d)} = 1$ for all $j\in [d-1]$. It follows that $a_d$ divides $a_j$ for all $1\leq j <d$. Repeating the determinant argument at every vertex, yields that also $a_d$ divides $a_j$ for all $1\leq j< d$. We conclude that  $a_j = \pm a_d$ for all $j\in [d-1]$.
Hence $a_1 = a_2 = \dots = a_d$ since $0\leq a_1 \leq \dots \leq a_d$.
Since $\simplex$ is $k$-Egyptian, we obtain $k = 0$ by $a_{d-1}\leq \frac{a_d}{k+1}$, and finally, $a_i = d$ for all $i\in [d]$ by $\frac{0+1}{a_d} + \sum_{i=1}^{d-1} \frac{1}{a_i} = 1$.
\qedhere
\end{compactenum}
\end{proof}

\begin{remark}
In the case of the Sylvester simplex, some items of \Cref{prop:SylvesterSimplexBasicFacts} simplify slightly:
\begin{compactenum}
\item[\eqref{item:SylvInequalities}] Since $\frac{s_d - 1}{s_i}$ is an integer for all $i\in [d-1]$, one can write the last facet inequality with integer (co-prime) coefficients as $x_d + \sum_{i=1}^{d-1} \tfrac{(k+1)\cdot (s_d-1)}{s_i}x_i = (k+1)\cdot (s_d - 1)$;

\item[\eqref{item:SylvVolume}] $\vol(\Sylv) = \tfrac{k+1}{d!}\cdot (s_d-1)^2$, and normalized volume $\nVol(\Sylv) = (k+1)\cdot(s_d-1)^2$;

\item[\eqref{item:SylvInteriorLatticePointsInFacets} and \eqref{item:SylvInteriorLatticePointsInFacets2}] Both facets, the one opposing $\b 0$ and the one opposing $s_d \b e_d$ have one lattice point in their relative interior. Namely, $(1,\ldots,1,k+1)$ respectively $(1,\ldots,1,0)$. We will see in \Cref{lem:UnimodularEquivalenceLayering}, that these two facets are even  unimodularly equivalent. %

\item[\eqref{item:SylvReflexiveK1}] Reflexivity holds if and only if $k = 1$;

\item[\eqref{item:SylvNeverSmooth}] Smoothness holds if and only if $d = 1$ or $(d, k) = (2, 0)$.
\end{compactenum}
\end{remark}

\begin{example}
The $0$-Egyptian diagonal $d$-simplex $\simplex = \conv(\b 0, d\b e_1, \dots, d\b e_d)$ is smooth,  has no interior lattice point and hence is not reflexive. It has volume $\vol(\simplex) = \frac{d^d}{d!}$, and is defined by the inequalities $x_i \geq 0$ for all $i\in [d]$ and $\sum_i x_i \leq d$.

Though it has many lattice points in the facet opposing $d\b e_d$, the only one in the relative interior of this facet is $(1,\ldots,1,0)$. The points   $(2, 1, \dots, 1, 0)$, $(1, 2, 1, \dots, 1, 0)$, $(1, 1, 2, 1, \dots, 1, 0)$, $\dots$, $(1, \dots, 1, 2, 0)$ lie on the boundary of this facet since they are also contained in the facet opposing $\b 0$. These are not the only lattice points on the boundary of the facet opposing $\b e_d$.
\end{example}

Among the slightly less obvious properties of the Sylvester simplices are those relative to its widths.
The \defn{width in direction $\b c\in\R^d$} of a polytope $\pol\subseteq\R^d$ is
$$\mathdefn{\omega(\pol, \b c)} \coloneqq \max_{\b x\in \pol}\langle\b x, \b c\rangle - \min_{\b y\in \pol}\langle \b y, \b c\rangle \,.$$

Recall that for $\b x\in \R^d$ and $p\geq 1$ its \defn{$p$-norm} is $\mathdefn{\|\b x\|_p} \coloneqq \left(\sum_{j=1}^d |x_j|^p\right)^{1/p}$. 
The \defn{minimal width} (\aka \defn{thickness}, \defn{minimal breadth}) of $\pol$ is the minimum of $\omega(\pol, \b c)$ where $\b c$ ranges over a certain subset of $\R^d$, \eg the \defn{lattice width} of $\pol$ is $\min\set{\omega(\pol, \b c)}{\b c\in \Z^d \text{ primitive}}$; while its \defn{Euclidean width} is given by $\min\set{\omega(\pol, \b c)}{\b c\in \R^d \text{ with } \|\b c\|_2 = 1}$.
Similarly, the \defn{maximal width} (\aka \defn{diameter}) of $\pol$ is the maximum of $\omega(\pol, \b c)$ where $\b c$ ranges over a certain subset of $\R^d$.
Usually, one guarantees that $\omega(\pol, \b c) \ne 0$ for all $\b c$ in the chosen subset of $\R^d$, and that this subset is compact (implying that minima and maxima are attained).

For $d = 1$, as $\Sylv[k][1] = [0, k+1]$, the unique width of $\Sylv[k][1]$ is $k+1$. 
For $d\geq 2$, we refer to \Cref{tab:Width} for the minimal and maximal widths of $\Sylv$.
We omit the proofs since it is not the main topic of the present article.

\begin{table} 
\setlength{\tabcolsep}{3.5pt}

\begin{adjustbox}{center,max width=0.96\textheight,max totalheight=0.70\textwidth}
\begin{tabular}{@{}c c | c c @{}}
\toprule
Name & $\b c\in$ & minimal width $\omega(\Sylv, \b c)$ & maximal width $\omega(\Sylv, \b c)$ \\
\midrule

Lattice & $\Z^d$, primitive & $2$ & $+\infty$ \\
\addlinespace[2pt]

$1$-norm & $\R^d$, $\|\b c\|_1 = 1$ & $\frac{(k+1)\cdot(s_d-1)}{(k+1)\cdot(s_d-1) - k}$ & $(k+1)\cdot(s_d-1)$ \\
\addlinespace[2pt]

Euclidean & $\R^d$, $\|\b c\|_2 = 1$ & $1/\sqrt{\frac{1}{(k+1)^2\cdot(s_d-1)^2} + \sum_{j < d} \frac{1}{s_j^2}}$ & $\sqrt{s_{d-1}^2 + (k+1)^2\cdot(s_d-1)^2}$ \\
\addlinespace[2pt]

$\infty$-norm & $\R^d$, $\|\b c\|_{\infty} = 1$ & $2$ & $(k+1)\cdot(s_d-1) + s_{d-1}$ \\

\bottomrule
\end{tabular}
\end{adjustbox}
\caption{Minimal and maximal widths of $\Sylv$ for $ d\geq 2$}
\label{tab:Width}
\end{table}

\section{Several ways of dissecting Sylvester simplices}\label{sec:DissectingSylvesterSimplex}

In this section, we study triangulations of Sylvester simplices.

\subsection{A flag, regular, and unimodular triangulation for Sylvester simplices}\label{ssec:ChimneyPolytopesPointOfView}

We begin by showing that $\Sylv$ admits a flag, regular, and unimodular triangulation.
We combine two constructions that preserve having such a triangulation: chimney polytopes and one-point extensions.
We will proceed by induction on $d$.
In what follows, recall that $\Sylv[-1] \subseteq \R^d$ is the $(d-1)$-dimensional simplex whose vertices are $\b0$ and $s_i\b e_i$ for $i\in [d-1]$.

Let $\polytope{Q}\subseteq\R^{d-1}$ be a lattice polytope.
Let $\ell$ and $u$ be two affine linear functionals $\R^{d-1} \to \R$ with integer coefficients such that $\ell \leq u$ on $\polytope{Q}$. 
The associated \defn{chimney polytope} is:
$$\mathdefn{\Chim(\polytope{Q}, \ell, u)} \coloneqq \set{(\b x,y)\in\R^{d-1}\times \R}{\b x \in \polytope{Q}, ~ \ell(\b x)\leq y \leq u(\b x)} ~\subseteq~\R^{d}\, .$$

\begin{example}
Let $\polQ = [0, 2]$ be the line segment.
Define $\ell (x) = 0$ and $u(x) = -x + 3$.
As illustrated in \Cref{fig:ChimneyAndOnePointExtension} (left), we get: $\Chim(\polQ, \ell, u) = \conv\bigl((0,0), (2,0), (2,1), (0,3)\bigr)$.
\end{example}

\begin{figure}
    \centering
    \includegraphics[width=0.4\linewidth]{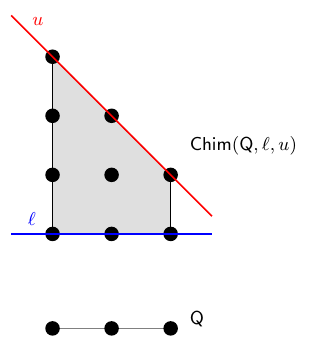}\hspace{1cm}
    \includegraphics[width=0.44\linewidth]{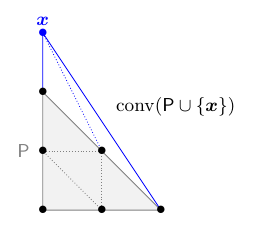}
    \caption{(Left) Example of a chimney polytope of a $1$-dimensional polytope.
    (Right) Example of a one-point extension of the polytope $\pol$ by the point $\b x$: a single facet is visible from $\b x$ (the top-most of $\pol$), and the triangulation of $\pol$ is extended to $\conv(\pol\cup\{\b x\})$ by adding $\conv(\simplex\cup\{\b x\})$ for each simplex $\simplex$ of the triangulation induced on this facet.}
    \label{fig:ChimneyAndOnePointExtension}
\end{figure}

We abuse notation, extending naturally this notion to any polytope $\polQ\subseteq\R^{d}$ contained in the hyperplane $\set{\b x\in \R^{d}}{x_{d} = 0}$.
Namely, if $\polQ \subseteq \set{\b x\in \R^{d}}{x_{d} = 0}$, then we define:
\[
	\Chim(\polytope{Q}, \ell, u) \coloneqq \set{(x_1, \dots, x_{d-1}, y)\in\R^{d}}{(x_1, \dots, x_{d-1}, 0) \in \polytope{Q}, ~ \ell(\b x)\leq y \leq u(\b x)} ~\subseteq~\R^{d}\, .
\]

\begin{proposition}\label{prop:SylvesterSimplexIsChimney}
Fix integers $ d,k $.
Consider the pair of affine linear functionals on $ \R^d $
\begin{equation*}
\ell(\b x) = 0 \qquad \text{and} \qquad u(\b x) = (k+1)\cdot\left(s_d - 1 - \sum_{j = 1}^{d-1} \frac{s_d - 1}{s_j} x_j\right),
\end{equation*}
where $(s_n)_{n\in \N}$ is the Sylvester sequence.
Then we have 
\[
	\Sylv = \Chim(\Sylv[-1], \ell, u)\, .
\]
\end{proposition}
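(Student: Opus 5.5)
The plan is to compare the two sides through their inequality descriptions. By \Cref{prop:SylvesterSimplexBasicFacts}~\eqref{item:SylvInequalities}, applied to the $k$-Egyptian diagonal simplex $\Sylv$ with $a_i = s_i$ for $i<d$ and $a_d=(k+1)(s_d-1)$, we have
\[
\Sylv=\set{\b x\in\R^d}{x_i\ge 0 \text{ for } i\in[d],\ \sum_{i=1}^{d-1}\frac{x_i}{s_i}+\frac{x_d}{(k+1)(s_d-1)}\le 1}.
\]
The base $\Sylv[-1]$, seen inside $\set{\b x}{x_d=0}$, is the $(d-1)$-simplex $\conv(\b 0, s_1\b e_1,\dots,s_{d-1}\b e_{d-1})$. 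By the same computation in dimension $d-1$ (or directly, since it is a diagonal simplex), it is described by $x_i\ge 0$ for $i\in[d-1]$ and $\sum_{i<d} x_i/s_i\le 1$.

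For the main step, unfold the definition of $\Chim(\Sylv[-1],\ell,u)$. A point $(x_1,\dots,x_{d-1},y)$ lies in it if and only if $x_i\ge 0$ for $i<d$, $\sum_{i<d}x_i/s_i\le1$, $0\le y$, and $y\le u(\b x)$. Since $\frac{s_d-1}{s_j}x_j=(s_d-1)\frac{x_j}{s_j}$, we can rewrite $y\le u(\b x)$ as
\[
\frac{y}{(k+1)(s_d-1)}\le 1-\sum_{j<d}\frac{x_j}{s_j},
\]
which is exactly the last facet inequality of $\Sylv$ with $x_d=y$.

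It then remains to compare the two systems. The chimney system is the $\Sylv$ system plus the extra inequality $\sum_{i<d}x_i/s_i\le1$. This extra inequality is implied by the others, because $x_d\ge0$ together with the last facet inequality gives it. Hence the two sets coincide.

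The same remark disposes of the side condition $\ell\le u$ on $\Sylv[-1]$ required in the definition of a chimney polytope: it is equivalent to $\sum x_j/s_j\le 1$. Also, $u$ has integer coefficients because $(s_d-1)/s_j$ is an integer for $j<d$, by the listed properties of the Sylvester sequence.

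There is no real obstacle here; the only care needed is the bookkeeping of the embedding convention $\Sylv[-1]\subseteq\{x_d=0\}$ and the degenerate case $d=1$. For $d=1$ the base is the point $\b0$ and $u\equiv (k+1)(s_1-1)=k+1$, which gives $[0,k+1]=\Sylv[k][1]$.
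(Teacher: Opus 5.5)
Your proof is correct, but it takes a different route from the paper: you compare inequality descriptions, whereas the paper compares vertices.

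\textbf{The paper's route.} It evaluates the lifted points $(\b v,\ell(\b v))$ and $(\b v,u(\b v))$ at the vertices of $\Sylv[-1]$. At $\b 0$ they give $\b 0$ and $(k+1)(s_d-1)\b e_d$. At each $s_i\b e_i$ both lifts coincide with $s_i\b e_i$. So the two simplices have the same vertices. This tacitly uses that a chimney over $\polQ$ with affine $\ell\le u$ is the convex hull of these lifted vertices. That fact is true, because a height between $\ell(\b x)$ and $u(\b x)$ is a convex combination of the lifts. The vertex computation also makes visible that $\ell=u$ at every vertex except $\b 0$, which the paper exploits later in the layering of \Cref{ssec:UnimodularLayering}.

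\textbf{Your route.} You avoid that convex-hull fact. Rewriting $y\le u(\b x)$ as $\frac{y}{(k+1)(s_d-1)}\le 1-\sum_{j<d}\frac{x_j}{s_j}$ identifies it with the facet inequality from \Cref{prop:SylvesterSimplexBasicFacts}~\eqref{item:SylvInequalities}. The only extra inequality in the chimney system, $\sum_{j<d} x_j/s_j\le 1$, is redundant once $y\ge 0$ is imposed. Your argument also explicitly checks two things the paper leaves implicit: the hypothesis $\ell\le u$ on $\Sylv[-1]$, and the degenerate case $d=1$.
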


\begin{proof}
First, as $s_d = 1 + \prod_{j < d} s_j$, we indeed get that $\frac{s_d-1}{s_j}$ is an integer for all $j\in[d-1]$, so $\ell$ and $u$ are affine linear functionals with integer coefficients.
We just need to compute the vertices of $\Chim(\Sylv[-1], \ell, u)$ which are of the form $\bigl(\b v, \ell(\b v)\bigr)$ and $\bigl(\b v, u(\b v)\bigr)$ for $\b v$ a vertex of $\Sylv[-1]$.
\begin{compactitem}
\item For $\b v = \b 0$, we have:
$\bigl(\b v, \ell(\b v)\bigr) = \b 0$ and $\bigl(\b v, u(\b v)\bigr) = (k+1)(s_d - 1)\b e_d$.

\item For $\b v = s_i\b e_i$, we have:
$\bigl(\b v, \ell(\b v)\bigr) = s_i\b e_i$ and $\bigl(\b v, u(\b v)\bigr) = s_i\b e_i$.
\end{compactitem}
Hence, $\Sylv$ and $\Chim(\Sylv[-1], \ell, u)$ are simplices with the same vertices: they are equal.
\end{proof}

We now introduce one-point extensions, following the notation we used in \cite[Section~2.4]{CaicedoJuhnkePoullot}.
For a lattice polytope $\pol$ and a lattice point $\b x\notin \pol$, the \defn{one-point extension} of $\pol$ by $\b x$ is the polytope $\conv\bigl(\pol\cup\{\b x\}\bigr)$.
A face $Y\subseteq\pol$ is \defn{visible from $\b x$} if $\conv\bigl(Y\cup\{\b x\}\bigr) \cap \pol = Y$ (see \Cref{fig:ChimneyAndOnePointExtension} (right)).

\begin{lemma}\label{lem:SylvesterSimplexBasisIsOnePointExtension}
The simplex $\Sylv[-1]$ is a one-point extension of $\Sylv[0][d-1] \times \{0\}$ by the point $s_{d-1}\b e_{d-1}$.
The faces of $\Sylv[0][d-1] \times \{0\}$ visible from $s_{d-1}\b e_{d-1}$ are the facet opposite to $\b 0$ in $\Sylv[0][d-1] \times \{0\}$, and all the faces of this facet.
\end{lemma}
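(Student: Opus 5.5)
We will unpack the definitions explicitly. Here $\Sylv[0][d-1]\times\{0\} = \conv\bigl(\b 0, s_1\b e_1,\dots,s_{d-2}\b e_{d-2}, (s_{d-1}-1)\b e_{d-1}\bigr)\subseteq\R^d$, whereas $\Sylv[-1]=\conv\bigl(\b 0, s_1\b e_1,\dots,s_{d-2}\b e_{d-2}, s_{d-1}\b e_{d-1}\bigr)$.

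\emph{Containment.} Since $s_{d-1}-1 < s_{d-1}$, the point $(s_{d-1}-1)\b e_{d-1}$ lies on the segment $[\b 0, s_{d-1}\b e_{d-1}]$. Hence $\Sylv[0][d-1]\times\{0\}\subseteq\Sylv[-1]$. All vertices of $\Sylv[-1]$ other than $s_{d-1}\b e_{d-1}$ are also vertices of the smaller simplex. Therefore $\conv\bigl((\Sylv[0][d-1]\times\{0\})\cup\{s_{d-1}\b e_{d-1}\}\bigr)=\Sylv[-1]$, and clearly $s_{d-1}\b e_{d-1}\notin\Sylv[0][d-1]\times\{0\}$. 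This gives the first claim.

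\emph{Visible faces.} We work inside the hyperplane $x_d=0$, identified with $\R^{d-1}$. By \Cref{prop:SylvesterSimplexBasicFacts}~\eqref{item:SylvInequalities}, the simplex $\polQ\coloneqq\Sylv[0][d-1]$ is given by the inequalities $x_i\geq 0$ for $i\in[d-1]$ together with $\sum_{i<d-1} x_i/s_i + x_{d-1}/(s_{d-1}-1)\leq 1$. Set $\b x = s_{d-1}\b e_{d-1}$.
\begin{compactitem}
\item The point $\b x$ violates only the last inequality, since $s_{d-1}/(s_{d-1}-1)>1$, and satisfies every $x_i\geq 0$.
\item A face of a polytope is visible from an external point exactly when every facet containing it is violated by that point, i.e.\ when the face lies in the union of facets whose inequalities $\b x$ violates. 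This is the standard beneath-beyond characterization.
\item The only violated facet is the facet $\polF$ opposite $\b 0$, given by $\sum_{i<d-1} x_i/s_i + x_{d-1}/(s_{d-1}-1)=1$. So the visible faces are exactly $\polF$ and its faces.
\end{compactitem}

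To avoid relying on the cited characterization, we can check both directions directly.

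\emph{Faces of $\polF$ are visible.} Let $Y\subseteq\polF$ be a face. Take $\b y=\lambda\b x+(1-\lambda)\b z$ with $\b z\in Y$ and $\lambda\in(0,1]$. Its value under the last functional is $\lambda\, s_{d-1}/(s_{d-1}-1)+(1-\lambda)>1$. Hence $\b y\notin\polQ$, so $\conv(Y\cup\{\b x\})\cap\polQ=Y$.

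\emph{Faces not contained in $\polF$ are not visible.} Let $Y$ be a face not contained in $\polF$, so $Y$ contains $\b 0$. The segment from $\b 0$ to $\b x$ contains $(s_{d-1}-1)\b e_{d-1}\in\polQ$. If this point is not in $Y$, visibility fails immediately. If it is in $Y$, then $Y$ contains both $\b 0$ and $(s_{d-1}-1)\b e_{d-1}$. In that case pick a point of $Y$ in the relative interior of $Y$ that lies off $\polF$. Segments from it toward $\b x$ stay inside $\polQ$ for a short while, because $\b x$ satisfies all other facet inequalities and the point is strict for the last one. So $\conv(Y\cup\{\b x\})\cap\polQ\supsetneq Y$, and again visibility fails.

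The only mildly delicate step is this last case analysis. Presenting it via the facet-violation criterion keeps it short, and no other obstacle is expected.
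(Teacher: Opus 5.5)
Your containment argument and your check that faces of $\polF$ are visible are correct. The converse, however, breaks in its second subcase. If $Y$ contains both $\b 0$ and $(s_{d-1}-1)\b e_{d-1}$, then $\b x=s_{d-1}\b e_{d-1}$ is an affine combination of these two points, so $\b x\in\operatorname{aff}(Y)$. The segment from a relative-interior point of $Y$ towards $\b x$ therefore stays in $\operatorname{aff}(Y)$. Its initial piece inside $\polQ$ lies in $\operatorname{aff}(Y)\cap\polQ=Y$, because $Y$ is a face. So you do not obtain $\conv(Y\cup\{\b x\})\cap\polQ\supsetneq Y$. In fact $\conv(Y\cup\{\b x\})\subseteq\operatorname{aff}(Y)$ forces $\conv(Y\cup\{\b x\})\cap\polQ=Y$ for every such $Y$.

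For instance, for $d=3$ one has $\polQ=\conv(\b 0,2\b e_1,2\b e_2)$ and $\b x=3\b e_2$. The edge $Y=[\b 0,2\b e_2]$ satisfies $\conv(Y\cup\{\b x\})\cap\polQ=[\b 0,3\b e_2]\cap\polQ=Y$, and the same holds trivially for $Y=\polQ$. So if the criterion $\conv(Y\cup\{\b x\})\cap\polQ=Y$ is read literally, these faces count as visible. That is how the paper phrases the definition, and it is what your direct check uses. Under that reading the second claim is false, so no argument along your lines can close this case.

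What you need is the standard beneath--beyond notion. It is also the notion required for \cite[Thm.~2.7]{CaicedoJuhnkePoullot} to make sense, since coning $[\b 0,(s_{d-1}-1)\b e_{d-1}]$ with $\b x$ would produce a degenerate simplex. Under this notion, $Y$ is visible if $[\b y,\b x]\cap\polQ=\{\b y\}$ for every $\b y\in Y$; equivalently, $Y$ lies in some facet whose inequality $\b x$ violates. Your first direction then goes through verbatim. The second becomes one line: every nonempty face not contained in $\polF$ contains $\b 0$, and $[\b 0,\b x]\cap\polQ=[\b 0,(s_{d-1}-1)\b e_{d-1}]\neq\{\b 0\}$.

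Also fix your bullet claiming a face is ``visible exactly when every facet containing it is violated''. That is not the right criterion: for $d\geq 3$ the vertex $s_1\b e_1$ lies on the facet $\{x_{d-1}=0\}$, which $\b x$ does not violate, yet it is visible. The correct criterion is your ``i.e.'' clause, being contained in some violated facet, and that is what your conclusion actually uses. With these repairs your proof is a detailed version of the paper's, which only compares the two vertex sets and declares the statement immediate.
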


\begin{proof}
This is immediate from the definitions:
The $(d-1)$-dimensional simplices $\Sylv[-1]$ and $\Sylv[0][d-1]\times\{0\}$ in $\R^d$ have in common the vertices $\b 0$, and $s_i\b e_i$ for all $i\in[d-2]$; while $\Sylv[-1]$ has a vertex $s_{d-1}\b e_{d-1}$, and $\Sylv[0][d-1]$ has a vertex $(s_{d-1}-1)\b e_{d-1}$.
\end{proof}

To prove \Cref{thmA} we make use of the following two results

\begin{compactenum}[(a)]
\item {\cite[Thm.~2.8 \& Cor.~2.9]{Unimodular_triangulation_existence}} If $\polytope{Q}$ admits a flag, regular, and unimodular triangulation, then $\Chim(\polytope{Q}, \ell, u)$ also admits a flag, regular, and unimodular triangulation, for all affine linear functionals $\ell$ and $u$ with integer coefficients such that $\ell\leq u$ on $\polytope{Q}$.
\label{item:fruTriangulationViaChimney}

\item \cite[Thm.~2.7]{CaicedoJuhnkePoullot} If $\pol$ admits a flag and regular triangulation $\c T$, then the one-point extension by the point $\b x$, \ie $\conv(\pol\cup\{\b x\})$, also admits one.
Moreover, the simplices of this triangulation of $\conv(\pol\cup\{\b x\})$ are the simplices of $\c T$ together with all the simplices of the form $\conv(\simplex \cup\{\b x\})$ for $\simplex\in\c T$ a simplex in a face of $\pol$ visible from $\b x$.\label{item:frTriangulationOnePointExtension}
\end{compactenum}

First, we focus on the Sylvester bases $\Sylv[-1]$.

\begin{proposition}\label{prop:InductionStepViaOnePointExtension}
If $\Sylv[0][d-1]$ admits a regular, flag, and unimodular triangulation, then $\Sylv[-1]$ admits a regular, flag, and unimodular triangulation.
\end{proposition}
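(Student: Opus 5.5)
Start from a regular, flag and unimodular triangulation $\c T$ of $\Sylv[0][d-1]$, and transport it to $\Sylv[0][d-1]\times\{0\}\subseteq\R^d$. The three properties are preserved because this embedding is a lattice isomorphism onto the hyperplane $x_d=0$. By \Cref{lem:SylvesterSimplexBasisIsOnePointExtension}, $\Sylv[-1]$ is the one-point extension of $\Sylv[0][d-1]\times\{0\}$ by $\b x = s_{d-1}\b e_{d-1}$. The visible faces are exactly the facet $\polF$ opposite $\b 0$ together with its faces. Result \eqref{item:frTriangulationOnePointExtension} then gives a regular and flag triangulation $\c T'$ of $\Sylv[-1]$. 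Its maximal simplices are those of $\c T$ together with $\conv(\simplex\cup\{\b x\})$ for each maximal simplex $\simplex$ of $\c T$ lying in $\polF$, where such $\simplex$ are $(d-2)$-simplices. It remains to check unimodularity of the new simplices.

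Since everything lives in the $(d-1)$-dimensional lattice $\Z^{d-1}\times\{0\}$, I would work in $\R^{d-1}$. There $\polF$ lies in the hyperplane $H=\set{\b y}{\sum_{i=1}^{d-2}\frac{y_i}{s_i} + \frac{y_{d-1}}{s_{d-1}-1} = 1}$. This uses that $\Sylv[0][d-1]$ has last vertex $(0+1)(s_{d-1}-1)\b e_{d-1}$. The normalized volume of $\conv(\simplex\cup\{\b x\})$ equals the normalized $(d-2)$-volume of $\simplex$ inside the lattice $H\cap\Z^{d-1}$, multiplied by the lattice distance from $\b x$ to $H$. Each $\simplex$ is a maximal face of a unimodular simplex of $\c T$ lying in the lattice hyperplane $H$, so it is unimodular with respect to $H\cap\Z^{d-1}$. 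Thus it suffices to show that $\b x$ has lattice distance $1$ from $H$.

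The primitive integer normal of $H$ is obtained by multiplying by $s_{d-1}-1=\prod_{j<d-1}s_j$. This gives the integer functional $\phi(\b y)=\sum_{i\le d-2}\frac{s_{d-1}-1}{s_i}y_i + y_{d-1}$, which is primitive because the coefficient of $y_{d-1}$ is $1$, and $H=\{\phi = s_{d-1}-1\}$. Then $\phi(\b x)=s_{d-1}$, so the lattice distance is $s_{d-1}-(s_{d-1}-1)=1$. Hence every new simplex is unimodular, and $\c T'$ is regular, flag and unimodular.

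The only delicate step is this distance computation. It needs the precise identification of the supporting hyperplane of $\polF$ and the primitivity of $\phi$; both follow from $s_{d-1}-1=\prod_{j<d-1}s_j$. The degenerate case $d=2$, where $\Sylv[0][1]=[0,1]$ and the base is $[0,2]$, I would check directly.
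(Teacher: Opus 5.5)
Your proof is correct and follows the paper's route: you extend a flag regular triangulation using result (b) on one-point extensions, then check that coning the simplices of $\polF$ from $s_{d-1}\b e_{d-1}$ preserves unimodularity. Your computation that $s_{d-1}\b e_{d-1}$ lies at lattice distance $1$ from the hyperplane of $\polF$, via the primitive functional $\phi$, is exactly what the paper's ``comparison of volumes via determinants'' asserts; you simply make it explicit.
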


\begin{proof}
Combining \eqref{item:frTriangulationOnePointExtension} \cite[Thm.~2.7]{CaicedoJuhnkePoullot} and \Cref{lem:SylvesterSimplexBasisIsOnePointExtension} guarantees that a regular and flag triangulation $\c T_\circ$ of $\Sylv[0][d-1]$ can be extended into a regular and flag triangulation $\c T$ of $\Sylv[-1]$.
It remains to prove that $\c T$ stays unimodular.

The $(d-2)$-dimensional simplex $\polF \coloneqq \conv\bigl(s_1\b e_1, \dots s_{d-2}\b e_{d-2}, (s_{d-1}-1)\b e_{d-1}\bigr)$ and the $(d-1)$-dimensional simplex $\conv\bigl(s_1\b e_1, \dots, s_{d-2}\b e_{d-2}, (s_{d-1}-1)\b e_{d-1}, s_{d-1}\b e_{d-1}\bigr)$ have the same volume (in their respective lattices), namely $\prod_{i\leq d-2} s_i$. 
The same comparison of volumes via determinants show that if $\simplex$ is a simplex of $\c T_\circ$ lying inside the facet $\polF$ of $\Sylv[0][d-1]$, then $\conv(\simplex, s_{d-1}\b e_{d-1})$ and $\simplex$ have the same volume (in their respective lattices).
Thus, by \eqref{item:frTriangulationOnePointExtension} \cite[Thm.~2.7]{CaicedoJuhnkePoullot} and \Cref{lem:SylvesterSimplexBasisIsOnePointExtension}, if $\c T_\circ$ is unimodular, then so is $\c T$ because the only faces of $\Sylv[0][d-1]$ visible from $s_{d-1}\b e_{d-1}$ lie in $\polF$.
This proves the claim and thus the proposition.
\end{proof}

We generate the Sylvester simplices iteratively by taking one-point extensions and chimney polytopes while maintaining a flag, regular and unimodular triangulation, as illustrated in \Cref{fig:SylvSimplicesSequence}.
This allows us to prove \Cref{thmA}.

\begin{figure}
    \centering
    \includegraphics[width=0.99\linewidth]{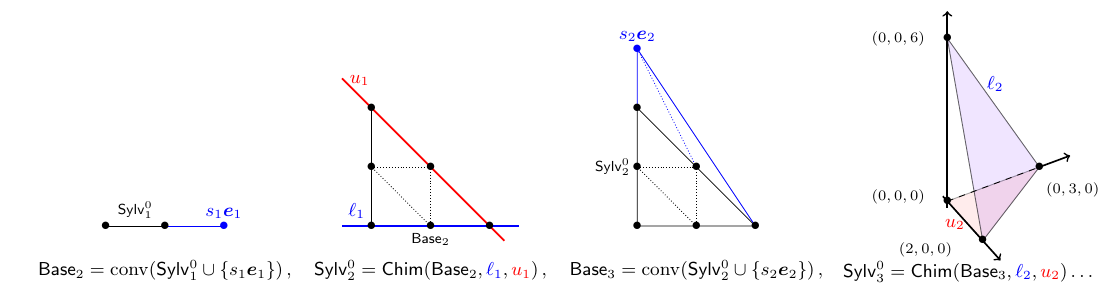}
    \caption{The sequence $(\Sylv[0][1], \Sylv[-1][1], \Sylv[0][2], \Sylv[-1][2], \Sylv[0][3],\dots)$ can be constructed taking iteratively one-point extensions and chimney polytopes.
    \vspace{-0.4cm}
    }
    \label{fig:SylvSimplicesSequence}
\end{figure}

\begin{theorem}\label{thm:fruTriangulationSylvesterSimplices}
The simplex $\Sylv$ admits a flag, regular, and unimodular triangulation for all $d\geq 1$ and $k\geq 0$.
The simplex $\Sylv[-1]$ also admits a flag, regular, and unimodular triangulation for $d\geq 1$.
\end{theorem}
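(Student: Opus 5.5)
We argue by induction on $d$, proving both claims at once along the sequence $\Sylv[0][1], \Sylv[-1][1], \Sylv[0][2], \Sylv[-1][2], \dots$ of \Cref{fig:SylvSimplicesSequence}. Each Sylvester simplex $\Sylv$ is obtained from the Sylvester base $\Sylv[-1]$ by the chimney construction of \Cref{prop:SylvesterSimplexIsChimney}. Each base $\Sylv[-1]$ is obtained from $\Sylv[0][d-1]$ by the one-point extension of \Cref{lem:SylvesterSimplexBasisIsOnePointExtension}. So it suffices to check a base case and to show that both constructions preserve the existence of a flag, regular and unimodular triangulation.

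\emph{Base case.} For $d = 1$, the base $\Sylv[-1][1] = \conv(\b 0) = \{\b 0\}$ is a single lattice point. Its trivial triangulation is flag, regular and unimodular. The simplex $\Sylv[k][1] = [0, k+1]$ is triangulated by the unit segments $[j, j+1]$ for $0 \le j \le k$. This triangulation is unimodular, flag (it is a $1$-dimensional complex), and regular (use the convex weight $\omega(j) = j^2$). Alternatively, it follows from the chimney step below applied to the point $\Sylv[-1][1]$.

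\emph{Step 1: from $\Sylv[0][d-1]$ to $\Sylv[-1]$.} Suppose that $\Sylv[0][d-1]$ has a flag, regular and unimodular triangulation; for $d \ge 2$ this is the case $k=0$ of the induction hypothesis. Then \Cref{prop:InductionStepViaOnePointExtension} gives such a triangulation for $\Sylv[-1]$. This is where the real content sits, namely the volume comparison showing that coning over the visible facet keeps the simplices unimodular; that is already handled in \Cref{prop:InductionStepViaOnePointExtension}. There is one small point to check: $\Sylv[-1]$ lives in the hyperplane $x_d = 0$ of $\R^d$, whereas $\Sylv[0][d-1]$ lives in $\R^{d-1}$. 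Under the identification $\R^{d-1} \cong \R^{d-1}\times\{0\}$, lattice points correspond and lattice volumes are preserved, so the properties of a triangulation transfer unchanged.

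\emph{Step 2: from $\Sylv[-1]$ to $\Sylv$.} By \Cref{prop:SylvesterSimplexIsChimney}, $\Sylv = \Chim(\Sylv[-1], \ell, u)$, where $\ell = 0$ and $u$ is an affine functional with integer coefficients. To apply result \eqref{item:fruTriangulationViaChimney}, we must check that $\ell \le u$ on $\Sylv[-1]$. By \Cref{prop:SylvesterSimplexBasicFacts}\eqref{item:SylvInequalities} applied to the base, every point of $\Sylv[-1]$ satisfies $\sum_{j<d} x_j/s_j \le 1$. Since $\sum_{j<d} 1/s_j = 1 - 1/(s_d-1)$, this gives
\[
u(\b x) \;\ge\; (k+1)(s_d-1)\Bigl(1 - \sum_{j<d} \tfrac{x_j}{s_j}\Bigr) \;\ge\; 0 \;=\; \ell(\b x).
\]
Result \eqref{item:fruTriangulationViaChimney} then yields a flag, regular and unimodular triangulation of $\Sylv$ for every $k \ge 0$. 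In particular this holds for $k = 0$, which feeds Step 1 at dimension $d+1$ and closes the induction.

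The only delicate step is the unimodularity in the one-point extension, and it is already settled by \Cref{prop:InductionStepViaOnePointExtension}. What remains here is the bookkeeping: ordering the induction correctly, checking $\ell \le u$, and handling the embedding $\R^{d-1}\times\{0\}$ used in the abuse of notation for chimneys.
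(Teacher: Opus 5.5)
Your proposal is correct and follows essentially the same route as the paper. Both start from the base case $\Sylv[k][1]=[0,k+1]$ and $\Sylv[-1][1]=\{\b 0\}$, use \Cref{prop:InductionStepViaOnePointExtension} to pass from $\Sylv[0][d-1]$ to $\Sylv[-1]$, and combine the chimney result with \Cref{prop:SylvesterSimplexIsChimney} to pass from $\Sylv[-1]$ to $\Sylv$ for all $k\geq 0$. Your explicit check that $\ell\le u$ on $\Sylv[-1]$ is a small detail the paper leaves implicit. Two minor remarks on it: $u(\b x)=(k+1)(s_d-1)\bigl(1-\sum_{j<d}x_j/s_j\bigr)$ holds with equality, and the facet description you use is valid for any diagonal simplex, not only Egyptian ones.
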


\begin{proof}
As it is a segment, the simplex $\Sylv[k][1] = [0, k+1]$ admits a flag, regular, and unimodular triangulation for all $k\geq 0$, and so (trivially) does $\Sylv[-1][1] = \{0\}$.

Suppose that, for a fixed $d$, and for all $k$, all simplices $\Sylv$ admit a flag, regular, and unimodular triangulation.
By \Cref{prop:InductionStepViaOnePointExtension}, as in particular $\Sylv[0][d]$ admits one, so does  $\Sylv[-1][d+1]$. 
Combining \eqref{item:fruTriangulationViaChimney} \cite[Thm.~2.8 \& Cor.~2.9]{Unimodular_triangulation_existence} and \Cref{prop:SylvesterSimplexIsChimney}, we deduce that $\Sylv[k][d+1]$ admits a flag, regular and unimodular triangulation for all $k\geq 0$.
The claim follows by induction.
\end{proof}

\begin{corollary}
For $d\geq 1$, $k\geq 0$, both $\Sylv$ and $\Sylv[-1]$ have the integer decomposition property.
\end{corollary}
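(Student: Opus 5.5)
The plan is to derive the corollary directly from \Cref{thm:fruTriangulationSylvesterSimplices}. A unimodular triangulation implies the integer decomposition property. So the only thing to write down is the standard argument that a lattice polytope admitting a unimodular triangulation is IDP. We apply it to $\Sylv$ and to $\Sylv[-1]$. In the latter case we work inside the lattice $\Z^{d-1}\times\{0\}$ of its affine hull, where $\Sylv[-1]$ is full-dimensional. Flagness and regularity are not needed.

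First, let $\pol$ be a lattice $n$-polytope with a unimodular triangulation $\c T$, let $m\geq 1$, and let $\b p\in m\pol\cap\Z^n$. Since $m\pol=\bigcup_{\simplex\in\c T} m\simplex$, the point $\b p$ lies in $m\simplex$ for some $\simplex=\conv(\b v_0,\dots,\b v_n)\in\c T$.

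Next we lift to the cone. The point $(\b p,m)$ lies in the cone over $\simplex\times\{1\}$. Unimodularity means that the vectors $(\b v_j,1)$ form a $\Z$-basis of $\Z^{n+1}$, because their determinant is $\pm\nVol(\simplex)=\pm1$. Equivalently, the half-open parallelepiped $\Pi^\circ_{\simplex}$ from \eqref{eq:half_open_para} contains only the origin as a lattice point, since $h^\ast(\simplex;t)=1$. Hence we can write
\[
(\b p,m)=\sum_j c_j(\b v_j,1)
\]
with $c_j\geq0$ real, and these coefficients are forced to be integers.

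Finally, comparing last coordinates gives $\sum_j c_j=m$. So $\b p$ is the sum of $m$ lattice points of $\simplex\subseteq\pol$, namely $c_j$ copies of $\b v_j$ for each $j$. This is exactly IDP.

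There is no real obstacle. The only point needing care is to state the IDP for the lower-dimensional $\Sylv[-1]$ with respect to the lattice of its affine span. Since $\Sylv[-1]$ contains $\b 0$ and lies in $\{x_d=0\}$, this lattice is just $\Z^{d-1}\times\{0\}$, so the above argument applies verbatim in $\R^{d-1}$.
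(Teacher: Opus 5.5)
Your proposal is correct and matches the paper's proof, which simply combines \Cref{thm:fruTriangulationSylvesterSimplices} with the standard fact that a (regular) unimodular triangulation implies the integer decomposition property. The paper only cites that fact, whereas you prove it via the $\Z$-basis $(\b v_j,1)$ of the cone over a unimodular simplex, and you rightly note that neither regularity nor flagness is needed.
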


\begin{proof}
Having a regular and unimodular triangulation implies having the integer decomposition property, see \cite[Figure~1]{FerroniHigashitani2024ExamplesCounterexamplesEhrhartTheory} for a comprehensive survey.
\Cref{thm:fruTriangulationSylvesterSimplices} yields the claim.
\end{proof}

\subsection{The layering of Sylvester simplices}\label{ssec:UnimodularLayering}

In \Cref{ssec:ChimneyPolytopesPointOfView}, we described the Sylvester simplex as a chimney polytope $\Sylv = \Chim(\Sylv[-1], \ell, u)$ for affine linear functionals $\ell$ and $u$ with $\ell\leq u$ (on $\Sylv$). 
Yet, we did not exploit the fact that $\ell(\b v) = u(\b v)$ for all but one of the vertices $\b v$ of $\Sylv[-1]$ (see \Cref{fig:SylvSimplicesSequence}).
This allows for a finer description of how $\Sylv$ depends on $k$.
Therefore, we now construct a triangulation of $\Sylv$ using copies of $\Sylv[0]$: although this triangulation is not unimodular, it will be much  more convenient in the discussion thereafter.
See \Cref{fig:Layering} for an illustration.
We denote $\mathdefn{\overline{\pol[X]}}$ the closure of $\pol[X]\subseteq\R^d$.

\begin{figure}
    \centering
    \includegraphics[width=0.99\linewidth]{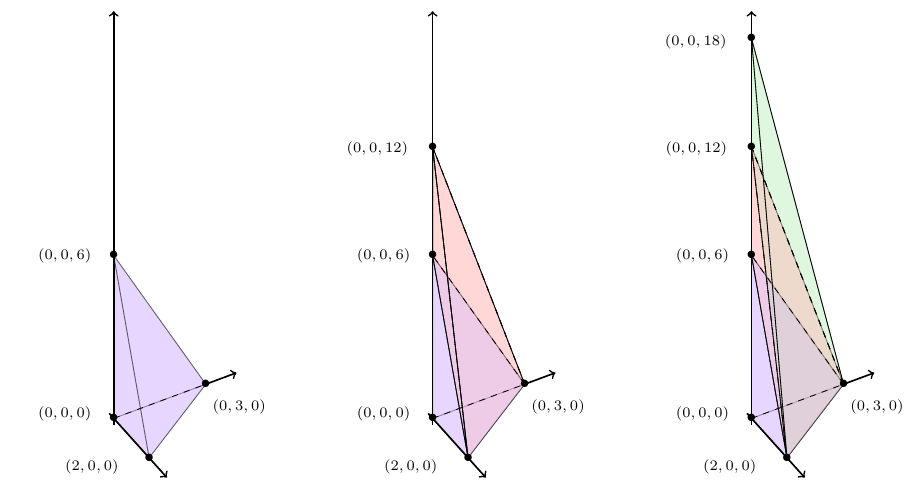}
    \caption{The simplices $\Sylv[0][3]$, $\Sylv[1][3]$ and $\Sylv[2][3]$ drawn via the layering of \Cref{lem:UnimodularEquivalenceLayering} (the scale on the vertical axis is not linear, but only indicative).}
    \label{fig:Layering}
\end{figure}

\begin{lemma}\label{lem:UnimodularEquivalenceLayering}
Let $d\geq2$ and $k\geq1$, and let $\polF_d^{k-1}$ denote the facet of
$\Sylv[k-1][d]$ opposite to $\b 0$. There is an affine
unimodular equivalence
$ \Sylv[0]
 \simeq_{\mathbb Z}
 \overline{\Sylv[k][d]
            \ssm\Sylv[k-1][d]}$
which maps $\Base$ onto $\polF_d^{k-1}$. Consequently,
\[
 \Sylv[k][d]
 \ssm\Sylv[k-1][d]
 \simeq_{\mathbb Z}
 \Sylv[0]\ssm\Base.
\]
In particular, $\Base$ is unimodularly equivalent to
the facet $\polF_d^0$ of $\Sylv[0]$ opposite to $0$.
\end{lemma}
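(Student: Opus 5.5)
Set $N \coloneqq s_d-1$. The plan is to write down an explicit affine unimodular map, check it on vertices, and then take care of the set-difference statement.

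First we identify the region. Since $\Sylv[k-1][d]\subseteq\Sylv[k][d]$ and both share the vertices $\b 0, s_1\b e_1,\dots,s_{d-1}\b e_{d-1}$, the closure $\overline{\Sylv[k][d]\ssm\Sylv[k-1][d]}$ is the simplex
\[
\pol[R]\coloneqq\conv\bigl(s_1\b e_1,\dots,s_{d-1}\b e_{d-1},\,kN\b e_d,\,(k+1)N\b e_d\bigr).
\]
This follows from the chimney description of \Cref{prop:SylvesterSimplexIsChimney}: $\pol[R]$ is the set of $(\b x,y)$ with $\b x\in\Base$ and $u_{k-1}(\b x)\le y\le u_k(\b x)$, where $u_k$ denotes the upper functional. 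We have $u_k-u_{k-1}=N-\sum_{j<d}\frac{N}{s_j}x_j$. Its facet $\{y=u_{k-1}(\b x)\}$ is exactly $\polF_d^{k-1}$.

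Next we construct the map. Take $\varphi(\b x,y)=(\b x,\,y+u_{k-1}(\b x)-kN)+kN\b e_d$, which simplifies to $\varphi(\b x,y)=(\b x,\,y+u_{k-1}(\b x))$. More precisely, define
\[
\varphi(x_1,\dots,x_{d-1},y)\coloneqq\Bigl(x_1,\dots,x_{d-1},\;y+kN-k\sum_{j<d}\tfrac{N}{s_j}x_j\Bigr).
\]
Its linear part is lower unitriangular with integer entries, since $N/s_j\in\Z$. Hence $\varphi$ is an affine unimodular transformation. It is a shear along $\b e_d$ that maps the region $0\le y\le u_0(\b x)$ over $\Base$ onto the region $u_{k-1}(\b x)\le y\le u_k(\b x)$. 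To verify this we compare vertices: $\varphi(\b 0)=kN\b e_d$, $\varphi(N\b e_d)=(k+1)N\b e_d$, and $\varphi(s_i\b e_i)=s_i\b e_i$, because $kN-kN=0$. So $\varphi(\Sylv[0])=\pol[R]$. Moreover $\Base=\conv(\b 0,s_i\b e_i)$ is sent to $\conv(kN\b e_d,s_i\b e_i)$. This is the facet of $\Sylv[k-1][d]$ opposite $\b 0$, because that simplex has top vertex $kN\b e_d$. Thus $\varphi(\Base)=\polF_d^{k-1}$.

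Then we derive the set-difference statement. Because $\varphi$ is a bijection, $\varphi(\Sylv[0]\ssm\Base)=\pol[R]\ssm\polF_d^{k-1}$. It remains to show that $\pol[R]\ssm\polF_d^{k-1}=\Sylv[k][d]\ssm\Sylv[k-1][d]$. The inclusion $\supseteq$ holds because $\Sylv[k][d]=\pol[R]\cup\Sylv[k-1][d]$ via the chimney decomposition, and $\pol[R]\cap\Sylv[k-1][d]=\polF_d^{k-1}$. For the intersection, both sets lie over $\Base$, and the only overlap is where $y=u_{k-1}(\b x)$. This is the one point needing care, and it reduces to the chimney inequalities. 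The final claim comes from specializing to $k=1$: then $\Base\simeq_\Z\polF_d^0$ via $\varphi$, and $\polF_d^0$ is the facet of $\Sylv[0]$ opposite $\b 0$.

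The main obstacle is the bookkeeping that $\pol[R]\cap\Sylv[k-1][d]$ is exactly the facet. Everything else is a vertex check. I expect the main step to be observing that $\varphi$ is simply the shear by $u_{k-1}$, which is integral because the Sylvester divisibility makes $u$ an integer functional.
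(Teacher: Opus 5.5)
Your proposal is correct and is essentially the paper's proof. Your shear $\varphi(\b x,y)=(\b x,\,y+u_{k-1}(\b x))$ is exactly the map given by the paper's matrix $U$ in homogeneous coordinates, and you verify it by the same vertex check. The one addition is that you justify, via the chimney description $\Sylv[k][d]=\Chim(\Base,0,u_k)$, that $\Sylv[k][d]\ssm\Sylv[k-1][d]$ equals $\pol[R]\ssm\polF_d^{k-1}$; the paper simply asserts this.
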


\begin{proof}
We define $\b v'_i = s_i\b e_i$ for $i\in [d-1]$, and $\b v'_0 = k\cdot(s_d - 1)\b e_d$ and $\b v'_d = (k+1)\cdot (s_d - 1)\b e_d$.
The set $\Sylv\ssm \Sylv[k-1]$ is the half-open simplex on these vertices, \ie it is the simplex $\conv(\b v'_i \;|\; 0\leq i\leq d)$ without the facet opposing the vertex $\b v'_d$.

Similarly, we define $\b v_i = s_i\b e_i$ for $i\in [d-1]$, and $\b v_0 = \b 0$ and $\b v_d = (s_d-1) \b e_d$, which are the vertices of the half-open simplex $\Sylv[0]\ssm \Sylv[-1]$.
Consider the $(d+1)\times(d+1)$ matrix:
$$U = \begin{pmatrix}
1 & 0 & \dots & \dots & 0 \\
0 & \ddots & \ddots && \vdots \\
\vdots & \ddots & \ddots & \ddots & \vdots \\
0 & \dots & 0 & 1 & 0\\
+k\tfrac{s_d - 1}{s_0} & -k\tfrac{s_d - 1}{s_1} & \dots & -k\tfrac{s_d - 1}{s_{d-1}} & 1
\end{pmatrix}$$

It is easily seen that $U\begin{pmatrix} 1 \\ \b v_i\end{pmatrix} = \begin{pmatrix} 1 \\ \b v'_i\end{pmatrix}$, since $s_0 = 1$.
Moreover, as $s_d - 1 = \prod_{i < d} s_i$, all entries of $U$ are integers. We also have $\det U = 1$. It follows that the affine unimodular transformation $U$ maps $\Sylv[0][d]$ to $\overline{\Sylv[k][d] \ssm\Sylv[k-1][d]}$, where $\Base$ is mapped onto $\polF_d^{k-1}$. The claim follows.
\end{proof}

We can now prove \Cref{thmB}.

\begin{theorem}\label{thm:HaveAffineCoefficients}
Let $d \geq 2$.
Each coefficient of the polynomials $\Ehr(\Sylv;t)$,  $h^\ast(\Sylv; t)$ and $f^\ast(\Sylv; t)$ as well as the number of lattice points $\bigl|\Sylv\cap \Z^d\bigr|$ is an affine function in $k$. In the last three cases, the coefficients of these functions are non-negative integers.
\end{theorem}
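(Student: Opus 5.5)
The plan is to use the layering of \Cref{lem:UnimodularEquivalenceLayering}. The sets $\Sylv[j][d]\ssm\Sylv[j-1][d]$ for $j=1,\dots,k$ are pairwise disjoint. Together with $\Sylv[0]$ they give the disjoint decomposition
\[
\Sylv \;=\; \Sylv[0]\;\sqcup\;\bigsqcup_{j=1}^{k}\bigl(\Sylv[j][d]\ssm\Sylv[j-1][d]\bigr).
\]
By the lemma, each piece of the union is unimodularly equivalent to the half-open simplex $\Sylv[0]\ssm\Base$.

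The crucial observation is that this decomposition is compatible with dilation. For every integer $m\geq 1$ we have $m\Sylv = m\Sylv[0]\sqcup\bigsqcup_j m\bigl(\Sylv[j][d]\ssm\Sylv[j-1][d]\bigr)$. The affine unimodular map $U$ from the lemma is linear on homogenized coordinates, so it maps the cone over $\Sylv[0]$ onto the cone over the closure of each layer. Hence it also maps $m(\Sylv[0]\ssm\Base)$ bijectively, on lattice points, onto $m(\Sylv[j][d]\ssm\Sylv[j-1][d])$. Counting lattice points then gives
\[
\Ehr(\Sylv;m)=\Ehr(\Sylv[0];m)+k\bigl(\Ehr(\Sylv[0];m)-\Ehr(\Base;m)\bigr).
\]
This identity also holds at $m=0$, since both sides equal $1$ there. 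The main point to check carefully is this dilation compatibility: one must verify that the dilated half-open simplices are still disjoint and cover $m\Sylv$. This is clear, because $\Sylv[j-1][d]\subseteq\Sylv[j][d]$ and dilation commutes with set differences.

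Affinity in $k$ then follows. Every coefficient of $\Ehr(\Sylv;t)$ equals $A+kB$, where $A$ is the corresponding coefficient of $\Ehr(\Sylv[0];t)$ and $B$ is the difference of the coefficients of $\Ehr(\Sylv[0];t)$ and $\Ehr(\Base;t)$, the latter viewed as a polynomial of degree $d-1$ in dimension $d$. The $h^\ast$- and $f^\ast$-vectors are fixed linear transforms of the Ehrhart polynomial, with the dimension $d$ held constant, so their entries are affine in $k$ too. The number of lattice points is $\Ehr(\Sylv;1)$.

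For integrality and non-negativity in these three cases, the constant term is the value at $k=0$, namely $h^\ast_i(\Sylv[0])$, $f^\ast_i(\Sylv[0])$ and $|\Sylv[0]\cap\Z^d|$. These are non-negative integers by Stanley's and Breuer's results. The slope is the difference between the values at $k=1$ and $k=0$, so it is an integer. It remains to show the slopes are non-negative.

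\begin{itemize}
\item \emph{Lattice points:} the slope counts lattice points of $\Sylv[0]\ssm\Base$, which is non-negative.
\item \emph{$h^\ast$-vector:} the slope is $h^\ast_i(\Sylv[0])-h^\ast_i(\Base)$, with $h^\ast$ of the facet padded by $h^\ast_d=0$. Monotonicity of $h^\ast$-vectors under inclusion (Stanley) gives $h^\ast_i(\Base)\le h^\ast_i(\Sylv[0])$. One could argue this directly instead, since the half-open parallelepiped of the facet embeds into that of the simplex at the same height.
\item \emph{$f^\ast$-vector:} the slope is a non-negative combination of the $h^\ast$ slopes, by the $h^\ast$-to-$f^\ast$ formula in dimension $d$. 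One could alternatively use a unimodular triangulation of $\Sylv[0]$ restricting to $\Base$ and count faces not contained in $\Base$.
\end{itemize}

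Invoking Stanley's monotonicity is the step I expect to need the most care.
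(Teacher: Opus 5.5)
Your layering decomposition and dilation argument are sound. They match the paper's approach, which works at the level of $h^\ast$ and writes $h^\ast(\Sylv;t)=h^\ast(\Sylv[0];t)+k\,h^\ast(\Sylv[0]\ssm\Base;t)$. However, your identification of the $h^\ast$-slope is wrong.

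The passage from an Ehrhart function to its $h^\ast$-vector depends on the dimension. Since $\Base$ is a $(d-1)$-polytope, $\sum_{m\ge 0} \Ehr(\Base;m)t^m = h^\ast(\Base;t)/(1-t)^{d}$. Over the denominator $(1-t)^{d+1}$ the numerator is therefore $(1-t)\,h^\ast(\Base;t)$, not $h^\ast(\Base;t)$ padded by $h^\ast_d=0$. Hence the slope of $h^\ast_i(\Sylv)$ is
\[
h^\ast_i(\Sylv[0]) - h^\ast_i(\Base) + h^\ast_{i-1}(\Base),
\]
not $h^\ast_i(\Sylv[0]) - h^\ast_i(\Base)$. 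Your formula already fails at $i=d$. It gives slope $h^\ast_d(\Sylv[0])-0=0$, whereas $h^\ast_d(\Sylv)=k$ (the number of interior lattice points), so the true slope is $1$. For $d=2$ you would get $(0,2,0)$ instead of the true $(0,3,1)$. So the inequality you obtain from Stanley's monotonicity, or from embedding the facet's parallelepiped, is true but concerns the wrong quantity. Your first argument for the $f^\ast$-slopes, which rests on the $h^\ast$-slopes, inherits the error.

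The repair is short. The correct slope is still non-negative, since $h^\ast_i(\Base)\le h^\ast_i(\Sylv[0])$ by monotonicity and $h^\ast_{i-1}(\Base)\ge 0$. Alternatively, as in the paper, note that $h^\ast(\Sylv[0];t)-(1-t)\,h^\ast(\Base;t)$ is the $h^\ast$-polynomial of the half-open simplex $\Sylv[0]\ssm\Base$. Its coefficients count lattice points, graded by height, in the associated half-open parallelepiped, which gives non-negativity without any monotonicity theorem.

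Your alternative argument for $f^\ast$ is correct as it stands. The basis $\binom{n-1}{i}$ does not depend on the dimension, so for $f^\ast$ the padding $f^\ast_d(\Base)=0$ is legitimate. The slope $f^\ast_i(\Sylv[0])-f^\ast_i(\Base)$ then counts the $i$-dimensional faces of a unimodular triangulation of $\Sylv[0]$ that are not contained in $\Base$.
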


\begin{proof}
This statement may seem obvious to an expert from the decomposition proven in \Cref{lem:UnimodularEquivalenceLayering}, but we detail it here for traceability.
We can compute $h^\ast(\Sylv; t)$  via the following decomposition into half-open $d$-simplices
$$\Sylv = \Sylv[0] \sqcup \bigsqcup_{p = 1}^{k} (\Sylv[p]\ssm \Sylv[p-1]).$$
Let $H_1(t) = h^\ast(\Sylv[0]; t)$ and $H_2(t) = h^\ast(\Sylv[0] \ssm\Sylv[-1]; t)$.
Using the unimodular equivalence of \Cref{lem:UnimodularEquivalenceLayering}, we get 
\[
h^\ast(\Sylv; t) = h^\ast(\Sylv[0];t) + \sum_{p = 1}^{k} h^\ast(\Sylv[p]\ssm \Sylv[p-1];t) = H_1(t) + \sum_{p=1}^k H_2(t) = H_1(t) + k\cdot H_2(t),
\]
where the $h^\ast$-polynomial of a half-open simplex is defined analogously to the usual one (see \eg \cite[Chapter 5.5]{BeckSanyal}). As $H_1(t)$ and $H_2(t)$ have non-negative integer coefficients (see \cite[Equation (5.5.1)]{BeckSanyal}), $h_i^\ast(\Sylv)$ is an affine function in $k$ with non-negative integral coefficients. 

As the coefficients of $f^\ast(\Sylv; t)$ are non-negative integral linear combinations of the coefficients of $h^\ast(\Sylv; t)$, they are also affine functions  in $k$ with non-negative integral coefficients.
Similarly, as the coefficients of the Ehrhart polynomial can be written as linear combinations of the $h^\ast$-vector entries, it follows that also these coefficients are affine functions in $k$. 
Finally, since $|\Sylv\cap\Z^d| = h^\ast_1(\Sylv) + d + 1$, the number of lattice points of $\Sylv$ is also an affine function in $k$ with non-negative integral coefficients.
\end{proof}

We now fix some notation that will be used in the following sections. 
As, by \Cref{thm:HaveAffineCoefficients}, $h_i^\ast(\Sylv)$ and $f_i^\ast(\Sylv)$ are affine functions in $k$ with non-negative integral coefficients, there exist non-negative integers $a_{d, i}$, $b_{d, i}$, $x_{d, i}$ and $y_{d, i}$ such that

\begin{equation*}\label{eq:affine}
h_i^\ast(\Sylv)=a_{d, i}\cdot k + b_{d, i} \quad \text{ and }\quad f_{i}^\ast(\Sylv)=x_{d, i}\cdot k+y_{d, i}  \,.
\end{equation*}

We will  prove in \Cref{cor:Symmetry_h_star} and \Cref{thm:f_star_induction} that 
\[
a_{d, i} = b_{d, d-i} \text{ for all } 0\leq i\leq d \text{ and }  y_{d, i} - x_{d, i} = 1 + \sum_{p=0}^{i} y_{d-i-1+p,\,\, p} \text{ for all } 0 \leq i < d. 
\]

We invite the reader to check these statements for $d\leq 6$ in \Cref{tab:hstar_coefficients} and \Cref{tab:f_star_coefficients}.
In the upcoming sections, we prove other facts that the reader might discover or verify by looking at the data.

\begin{remark}\label{rmk:UnimodLayeringForDiagonalSimplices}
Note that \Cref{thm:HaveAffineCoefficients} also implies that $\vol(\Sylv)$ is an affine function (actually it is linear in $k+1$ since $\vol(\Sylv[-1]) = 0$ because $\Sylv[-1]$ is not $d$-dimensional).
This is straightforward because we have seen that: $\nVol(\Sylv) = (k+1)\cdot (s_d - 1)^2$.
\end{remark}

\begin{remark}
Fix a family of diagonal $d$-simplices $\simplex_k = \conv(\b 0, a_1\b e_1, \dots, (k+1)a_d\b e_d)$ such that $a_i$ divides $a_d$ for all $i\in [d-1]$.
The proof of \Cref{lem:UnimodularEquivalenceLayering} shows that the simplex $\simplex_k$ admits the same type of layering as the Sylvester simplex.
Precisely: $\simplex_{k+1}\ssm \simplex_k ~\UniEq~ \simplex_{k}\ssm\simplex_{k-1}$ for all $k \geq 1$, and the Ehrhart-theoretic quantities (number of lattice points, coefficients of the Ehrhart, $h^\ast$- and $f^\ast$-polynomials, etc.) are affine functions of $k$.

Note that this does not require $\simplex_k$ to be $k$-Egyptian.
\end{remark}

\begin{remark}\label{ssec:example_small_dimension}
Using \Cref{thm:HaveAffineCoefficients}, we can compute the number of lattice points of $\Sylv$, and the polynomials $\Ehr(\Sylv; t)$, $h^\ast(\Sylv; t)$ and $f^\ast(\Sylv; t)$ for all $k$ by evaluating them only at $k = 0$ and $k = 1$ (via a computer experiment).
We refer to \Cref{tab:LatticePointsNbr,tab:ehrhart_coefficients,tab:magic_coefficients,tab:local_h_coefficients,tab:boundary_h_coefficients,tab:hstar_coefficients,tab:f_star_coefficients} of Appendix~\ref{app:Tables} for the explicit values.

For $d \leq 4$, we can directly enumerate all the lattice points and compute the desired quantities from there.
However, for $d \in\{5, 6, 7\}$, there are too many lattice points for such a straightforward method to work:
instead, we use \Cref{thm:hStarSylvester} and its consequences.

The leading coefficient of $\Ehr(\Sylv; t)$ and $f_d^\ast(\Sylv)$ can be directly deduced from $\nVol(\Sylv)$:
$$d!\cdot [t^d] \Ehr(\Sylv; t) =  f_d^\ast(\Sylv) = \nVol(\Sylv) = (k+1)\cdot (s_d-1)^2\,.$$
Similarly, the number of lattice points of $\Sylv$ can be read  directly from $\Ehr(\Sylv; t)$, $h^\ast(\Sylv; t)$ and $f^\ast(\Sylv; t)$ via
$$\Ehr(\Sylv; 1) = d+1 + h_1^\ast(\Sylv) = f_0^\ast(\Sylv)= \bigl|\Sylv\cap \Z^d\bigr|. $$
\end{remark}

\section{The \texorpdfstring{$f^\ast$}{f*}-vectors of Sylvester simplices }\label{sec:fStarPolynomialsSylvesterSimplices}

In this section, we show a partial recursive formula for the $ f^\ast $-vectors of Sylvester simplices.
To do so, we will compute $f_i^\ast(\Sylv)$ via a unimodular triangulation of $\Sylv$ (see \Cref{subs:fandh}, especially \Cref{rmk:BekteMcMullen}).
Some immediate properties can be derived for $f^\ast(\Sylv)$, namely:

\begin{proposition}\label{prop:BasicPropertiesFstar}
If $d\geq 1$, $k\geq 0$, then $f_{d}^\ast(\Sylv)=(k+1)\cdot(s_d - 1)^2$ and  $\sum_{i=-1}^{d} (-1)^i f_i^\ast(\Sylv) = 0$.
Consequently, 
\[
x_{d, d} = y_{d, d} = (s_d - 1)^2 \quad \text{and} \quad \sum_{i=-1}^{d} (-1)^i \cdot x_{d, i} = \sum_{i = -1}^{d} (-1)^i \cdot y_{d, i} = 0 \,.
\]
\end{proposition}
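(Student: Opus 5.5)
The plan has two independent parts, one for each identity in the statement, followed by extracting the coefficients in $k$.

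For the first identity, use the list of properties of $f^\ast$-vectors from \Cref{subs:fandh}, which gives $f_d^\ast(\pol) = \nVol(\pol)$. By \Cref{prop:SylvesterSimplexBasicFacts}~\eqref{item:SylvVolume}, specialized to the Sylvester simplex (see the remark following it), $\nVol(\Sylv) = (k+1)(s_d-1)^2$. Writing $f_d^\ast(\Sylv) = x_{d,d}\,k + y_{d,d}$ and comparing coefficients of $k$ then gives $x_{d,d} = y_{d,d} = (s_d-1)^2$. This comparison is legitimate because the identity holds for every integer $k \geq 0$, so two affine functions of $k$ agree identically.

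For the alternating sum, evaluate the $f^\ast$-polynomial at $t=-1$. From the identity $(1+t)^{d+1} h^\ast(\pol; \tfrac{t}{1+t}) = \sum_{i=-1}^d f_i^\ast(\pol)\, t^{i+1}$, we get $\sum_i f_i^\ast(\pol)(-1)^{i+1} = \lim_{t\to -1} (1+t)^{d+1} h^\ast(\pol;\tfrac{t}{1+t})$. Since $h^\ast$ has degree at most $d$, each term $(1+t)^{d+1-j} t^j h^\ast_j$ has a factor $(1+t)$ and therefore vanishes at $t=-1$. So the alternating sum is $0$ for every lattice polytope.

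Alternatively, I can argue combinatorially via \Cref{rmk:BekteMcMullen} and the unimodular triangulation of \Cref{thm:fruTriangulationSylvesterSimplices}. The alternating sum $\sum_{i\ge -1}(-1)^i f_i$ of the face numbers of a triangulation of a $d$-ball, including the empty face, equals $-(\chi(\text{ball}) - 1) = 0$. Either route works; the algebraic one avoids topology, so I will use that. Finally, substitute $f_i^\ast(\Sylv) = x_{d,i}k + y_{d,i}$, valid for all $k\ge 0$ by \Cref{thm:HaveAffineCoefficients}. The affine function $k\mapsto \sum_i(-1)^i(x_{d,i}k+y_{d,i})$ vanishes identically, so both its slope $\sum_i (-1)^i x_{d,i}$ and its constant term $\sum_i(-1)^i y_{d,i}$ are zero.

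There is one bookkeeping subtlety. We have $f^\ast_{-1}=1$ for all $k$, so $x_{d,-1}=0$ and $y_{d,-1}=1$. Also, \Cref{thm:HaveAffineCoefficients} is stated for $d\ge 2$, so the case $d=1$ must be checked directly: $\Sylv[k][1]=[0,k+1]$ has $f^\ast = (1, k+2, k+1)$, and $1-(k+2)+(k+1)=0$. No step is a real obstacle; the only care needed is handling these edge conventions.
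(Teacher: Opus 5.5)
Your proposal is correct, but it takes a different route from the paper's proof.

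The paper argues geometrically. It takes the unimodular triangulation of \Cref{thm:fruTriangulationSylvesterSimplices} and reads $f^\ast$ as face numbers via Betke--McMullen. Then $f_d^\ast(\Sylv)$ counts maximal simplices, hence equals $\nVol(\Sylv)$, and the alternating sum is the reduced Euler characteristic of a triangulated $d$-ball, namely $0$. This is exactly the alternative you sketched and set aside. (In passing: $\sum_{i\ge -1}(-1)^i f_i=\chi-1$, not $-(\chi-1)$, which is harmless here since it vanishes.)

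Your chosen argument is purely algebraic. It uses $f_d^\ast=\sum_{j\le d}h_j^\ast=\nVol$, and $f^\ast(\pol;-1)=0$ because in $\sum_j h_j^\ast t^j(1+t)^{d+1-j}$ every term with $j\le d$ is divisible by $1+t$.

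What your route buys is generality and economy. Both identities hold for every lattice $d$-polytope, with no triangulation needed. The only Sylvester-specific inputs are then the volume formula and the affine dependence on $k$ from \Cref{thm:HaveAffineCoefficients}. The paper's route gives the identities a combinatorial meaning, but leans on the existence of a unimodular triangulation for what is really a general fact.

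You also handle the conventions $x_{d,-1}=0$, $y_{d,-1}=1$ more carefully than the paper does. Likewise for the case $d=1$, which \Cref{thm:HaveAffineCoefficients} does not cover.
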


We invite the reader to verify these statements for $d\leq 6$ in \Cref{tab:f_star_coefficients}. 

\begin{proof}
Since, by \Cref{thm:fruTriangulationSylvesterSimplices},  $\Sylv$ admits a unimodular triangulation, $f_{d}^\ast(\Sylv)$ is equal to the number of maximal simplices of any unimodular triangulation of $\Sylv$, which is equal to its (normalized) volume, \ie $(k+1)\cdot(s_d - 1)^2$ (cf. \Cref{subs:fandh}).
The second statement follows since $\sum_{i=-1}^{d} (-1)^if_i^\ast(\Sylv)$ is equal to the reduced Euler characteristic of the unimodular triangulation, which is $0$. Comparing the constant and linear terms in $k$ proves the last two assertions.
\end{proof}

\begin{remark}\label{lem:fstar_Base}
We want to point out that the definition of $\Sylv$ makes sense even for $k=-1$. By definition, we have 
$\pol[Sylv]_d^{-1}=\Base$, which in turn implies that $f_i^\ast(\Base)=f_i^\ast(\pol[Sylv]_d^{-1})$ for $i\geq 0$.
\end{remark}

We now state a recursion for the $f^\ast$-vector entries.
\begin{theorem}\label{thm:f_star_induction}
For all integers $d \geq 2$ and $0\leq i\leq d$ we have the following recursion:
\[
f^\ast_i(\Sylv[-1]) = f^\ast_{i}(\Sylv[0][d-1]) + f^\ast_{i-1}(\Sylv[-1][d-1]) = \sum_{q=-1}^{i}f_{q}^\ast(\Sylv[0][d-1-i+q]).
\]
Consequently, for all $d\geq 2$ and $0\leq i\leq d-1$, we have
\[
y_{d,i}-x_{d,i} ~=~ y_{d-1,i}+y_{d-1,i-1}-x_{d-1,i-1} ~=~ \sum_{q=-1}^{i} y_{d-1-i+q,\,q}
\]
where we set $y_{d,-1}=1$ and $x_{d,-1}=0$.
\end{theorem}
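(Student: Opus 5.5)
The first equality comes from counting faces in the unimodular triangulation of $\Base$ built in \Cref{prop:InductionStepViaOnePointExtension}. The identity is then unrolled, and finally converted to the $x,y$ coefficients using the layering.

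\textbf{First equality.} By \Cref{lem:SylvesterSimplexBasisIsOnePointExtension} and result \eqref{item:frTriangulationOnePointExtension}, a unimodular triangulation $\c T_\circ$ of $\Sylv[0][d-1]\times\{0\}$ extends to a unimodular triangulation $\c T$ of $\Base$. The new simplices of $\c T$ are exactly the cones $\conv(\simplex\cup\{s_{d-1}\b e_{d-1}\})$, where $\simplex$ runs over the simplices of $\c T_\circ$ contained in the facet $\polF$ opposite $\b 0$, including the empty simplex, which gives the new vertex itself. Hence the number of $i$-simplices of $\c T$ equals the number of $i$-simplices of $\c T_\circ$ plus the number of $(i-1)$-simplices of $\c T_\circ|_{\polF}$.

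It remains to identify the second count. By \Cref{lem:UnimodularEquivalenceLayering} (applied in dimension $d-1$), $\polF$ is unimodularly equivalent to $\Sylv[-1][d-1]$. The restriction of a unimodular triangulation to a face is a unimodular triangulation of that face, so by \Cref{rmk:BekteMcMullen} this count is $f^\ast_{i-1}(\Sylv[-1][d-1])$, independent of the triangulation. Together with $f^\ast_i(\Sylv[0][d-1])$ for the first count and \Cref{rmk:BekteMcMullen} for $\Base$, this gives the first equality.

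\textbf{Second equality.} Iterate the first one: apply it to $f^\ast_{i-1}(\Sylv[-1][d-1])$, then to $f^\ast_{i-2}(\Sylv[-1][d-2])$, and so on. After $j$ steps we have the partial sum $\sum_{q} f^\ast_q(\Sylv[0][d-1-i+q])$ plus a remainder $f^\ast_{i-j-1}(\Sylv[-1][d-j-1])$. Stop when the index reaches $-1$, i.e.\ at $f^\ast_{-1}(\Sylv[-1][d-1-i])=1=f^\ast_{-1}(\Sylv[0][d-1-i])$, which is exactly the $q=-1$ term. Two points need checking here. Each dimension must stay at least $1$ (and the recursion must hold for the dimensions reached), which works since $i\le d$ forces $d-1-i\geq -1$. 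The edge case $i=d$ is separate: there the $q=-1$ term has dimension $-1$ and is to be read as $1$, while $f^\ast_d(\Sylv[0][d-1])=0$. I would handle it by the convention that $\Sylv[-1][1]$ is a point.

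\textbf{Consequence.} By \Cref{thm:HaveAffineCoefficients}, the function $k\mapsto f^\ast_i(\Sylv)=x_{d,i}k+y_{d,i}$ is affine, and the layering of \Cref{lem:UnimodularEquivalenceLayering} still makes sense at $k=-1$. By \Cref{lem:fstar_Base}, for $i\ge0$ we have $y_{d,i}-x_{d,i}=f^\ast_i(\Base)$. The justification is that the half-open decomposition $\Sylv[0]=\Base\sqcup(\Sylv[0]\ssm\Base)$ gives $f^\ast(\Sylv[0])-f^\ast(\Base)=$ the slope contribution, and valuativity makes this rigorous. Substituting $f^\ast_i(\Sylv[0][d-1])=y_{d-1,i}$ and $f^\ast_{i-1}(\Sylv[-1][d-1])=y_{d-1,i-1}-x_{d-1,i-1}$ (using $y_{d,-1}=1$, $x_{d,-1}=0$ when $i=0$) gives the displayed formulas for $0\le i\le d-1$.

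\textbf{Main obstacle.} The delicate steps are two. First, justifying $y_{d,i}-x_{d,i}=f^\ast_i(\Base)$, which requires checking that the lower-dimensional $\Base$ fits the affine formula at $k=-1$. Second, the bookkeeping of the visible faces, namely that only the faces of $\polF$ contribute and that the empty face contributes the new vertex.
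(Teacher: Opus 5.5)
Your route is the paper's. You cone the simplices of the facet $\polF$ of $\Sylv[0][d-1]$ opposite $\b 0$ from $s_{d-1}\b e_{d-1}$, identify $\polF$ with $\Base[d-1]$ via \Cref{lem:UnimodularEquivalenceLayering} (for $d=2$ both are single lattice points), unroll, and substitute the affine forms. The first equality and the middle expression $y_{d-1,i}+y_{d-1,i-1}-x_{d-1,i-1}$ are fine on the whole stated range. Your justification of $y_{d,i}-x_{d,i}=f_i^\ast(\Base)$ via $\Ehr(\Sylv;n)=(k+1)\Ehr(\Sylv[0];n)-k\,\Ehr(\Base;n)$ is in fact more careful than \Cref{lem:fstar_Base}, which asserts it ``by definition''.

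The gap is in the boundary bookkeeping of the unrolled sum.
\begin{itemize}
\item \emph{Wrong range.} Reaching the $q=-1$ term means applying the first equality to $\Base[d],\Base[d-1],\dots,\Base[d-i]$. That equality needs dimension at least $2$: in dimension $1$ it would read $f_0^\ast(\Base[1])=f_0^\ast(\Sylv[0][0])+f_{-1}^\ast(\Base[0])$, i.e.\ $1=1+1$. So the right condition is $d-i\ge 2$, not ``$d-1-i\ge -1$'', and your argument proves the summed formula only for $0\le i\le d-2$.
\item \emph{Indexing.} The $q=-1$ term is $f_{-1}^\ast(\Sylv[0][d-2-i])$, not $f_{-1}^\ast(\Sylv[0][d-1-i])$. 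In particular it has dimension $-2$ when $i=d$, not $-1$.
\item \emph{Failure at $i=d-1$.} This is more than a hole in the write-up: with the $q=-1$ term read as $1$, the identity is false. For $d=2$, $i=1$ one has $f_1^\ast(\Base[2])=\nVol([0,2])=2$, whereas $1+f_0^\ast(\Sylv[0][0])+f_1^\ast(\Sylv[0][1])=3$. In the $y$-form, the table gives $y_{4,3}-x_{4,3}=42$, but $y_{-1,-1}+y_{0,0}+y_{1,1}+y_{2,2}+y_{3,3}=1+1+1+4+36=43$.
\item \emph{Failure at $i=d$.} The left side is $f_d^\ast(\Base)=0$, since $\Base$ is $(d-1)$-dimensional. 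So reading the $q=-1$ term as $1$ cannot work, and the convention that $\Base[1]$ is a point does not address this.
\end{itemize}

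The paper's proof (``applying the induction hypothesis'') passes over the same point, so the statement itself must be adjusted. One option is to restrict the summed identity to $0\le i\le d-2$. The other is to declare Sylvester simplices of negative dimension empty, with $f_{-1}^\ast(\emptyset)=\Ehr(\emptyset;0)=0$. That means $y_{-1,-1}=0$, keeping $y_{d,-1}=1$ only for $d\ge0$; with this convention the cases $i\in\{d-1,d\}$ do check out.
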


\begin{proof}
We show the claim by induction on $d$. For $d = 2$, one verifies the formula on \Cref{tab:f_star_coefficients}.

Let $d \geq 3$. Consider a unimodular triangulation $\c T$ of $\Sylv[0][d-1]$. Since, by \Cref{lem:SylvesterSimplexBasisIsOnePointExtension}, $\Sylv[-1]$ is a one-point extension of $\Sylv[0][d-1]$ where only the facet $\polF_{d-1}$ opposite to $\b 0$ is visible, we can extend $\c T$ to a triangulation of $\Sylv[-1]$ by just coning over the simplices in $\polF_{d-1}$ with the vertex $s_{d-1}\b e_{d-1}$. Since, by \Cref{lem:UnimodularEquivalenceLayering},  the facets $\polF_{d-1}$ and $\Sylv[-1][d-1]$ are unimodular equivalent, we can interpret the triangulation on $\polF_{d-1}$ that is induced by $\c T$ as a triangulation of $\Sylv[-1][d-1]$. This shows  the first claimed formula.  Applying the induction hypothesis yields the second claim. The last claim follows by substituting $f_i^\ast(\Sylv) = x_{d,i}\cdot k + y_{d,i}$. 
\end{proof}

Recall that, by \Cref{thm:HaveAffineCoefficients}, the number of lattice points $\bigl|\Sylv\cap \Z^d\bigr|$ is an affine function of~$k$.
We prove the recursive formula on its coefficients that has been hinted at by \Cref{tab:LatticePointsNbr}.

\begin{corollary}\label{cor:NbrLatticePoints_RecursionFormula}
Let $\bigl|\Sylv\cap \Z^d\bigr| = u_d\cdot k + v_d$ be the number of lattice points of $\Sylv$ and set $v_{-1}\coloneqq 0$. Then, for all $d \geq 2$,  we have:
$$v_d - u_d = v_{d-1} + 1.$$
\end{corollary}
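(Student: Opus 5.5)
We specialise \Cref{thm:f_star_induction} to $i=0$, using that $f_0^\ast$ counts lattice points. Since $f_0^\ast(\Sylv)=\bigl|\Sylv\cap\Z^d\bigr|$ by \Cref{subs:fandh}, we have $u_d=x_{d,0}$ and $v_d=y_{d,0}$. The second formula of \Cref{thm:f_star_induction} with $i=0$ gives
\[
y_{d,0}-x_{d,0} = y_{d-1,0}+y_{d-1,-1}-x_{d-1,-1} = y_{d-1,0}+1-0 ,
\]
with the conventions $y_{d-1,-1}=1$ and $x_{d-1,-1}=0$. Hence $v_d-u_d=v_{d-1}+1$ for all $d\geq 2$.

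The only point needing care is the range of validity. At $d=2$ the formula involves $v_1$, and $\Sylv[k][1]=[0,k+1]$ has $k+2$ lattice points, so $u_1=1$ and $v_1=2$. The theorem is stated for $d\geq 2$, whose first formula at $i=0$ reads $f^\ast_0(\Sylv[-1])=f^\ast_0(\Sylv[0][d-1])+f^\ast_{-1}(\Sylv[-1][d-1])$; at $d=2$ it involves only $\Sylv[0][1]$ and $\Base[1]=\{0\}$.

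As a sanity check, I will also verify the case $d=2$ directly, independently of the table. Here $\Sylv[k][2]=\conv(\b 0,2\b e_1,2(k+1)\b e_2)$. On the line $x_1=0$ there are $2k+3$ lattice points, on $x_1=1$ there are $k+2$, and on $x_1=2$ there is $1$. The total is $3k+6$, so $u_2=3$, $v_2=6$, and indeed $v_2-u_2=3=v_1+1$.

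A more conceptual alternative, should one want to avoid \Cref{thm:f_star_induction}, runs as follows. The constant term $v_d-u_d$ is the affine function evaluated at $k=-1$, which equals $|\Base\cap\Z^d|$ by \Cref{lem:fstar_Base}. By \Cref{lem:SylvesterSimplexBasisIsOnePointExtension}, $\Base$ is $\Sylv[0][d-1]$ together with the single extra lattice point $s_{d-1}\b e_{d-1}$. To see this, note that a point with $x_{d-1}\geq s_{d-1}-1$ inside the segment region satisfies $x_{d-1}\in\{s_{d-1}-1,s_{d-1}\}$ with the other coordinates zero. So $|\Base\cap\Z^d| = v_{d-1}+1$. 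The one step requiring an argument is exactly this extra-point count, and it follows from the inequality description in \Cref{prop:SylvesterSimplexBasicFacts}(b).

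The convention $v_{-1}=0$ is not needed for $d\geq2$.
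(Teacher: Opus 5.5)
Your main argument is exactly the paper's proof. You specialise the recursion of \Cref{thm:f_star_induction} to $i=0$ and use $f_0^\ast(\Sylv)=\bigl|\Sylv\cap\Z^d\bigr|$, so that $u_d=x_{d,0}$, $v_d=y_{d,0}$, and the convention $y_{d-1,-1}-x_{d-1,-1}=1$ supplies the $+1$.

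Your optional alternative reaches the right conclusion, but two of its steps are only sketched. First, a lattice point of $\Base\ssm\Sylv[0][d-1]$ is not a priori known to have $x_{d-1}\geq s_{d-1}-1$. The fix is to clear denominators with $P=s_{d-1}-1$ and $A=\sum_{i<d-1}x_iP/s_i\in\Z$: such a point satisfies $P<A+x_{d-1}\leq P+x_{d-1}/s_{d-1}$, and integrality then forces $x_{d-1}=s_{d-1}$, so the point is $s_{d-1}\b e_{d-1}$. Second, the identity $v_d-u_d=\bigl|\Base\cap\Z^d\bigr|$ is best justified directly from the layering in \Cref{lem:UnimodularEquivalenceLayering}, rather than from \Cref{lem:fstar_Base}.
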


\begin{proof}
This follows from \Cref{thm:f_star_induction} combined with the fact that $\bigl|\Sylv\cap \Z^d\bigr| = f^\ast_{0}(\Sylv)$.
\end{proof}

We already know the explicit values for $f_{-1}^\ast(\Sylv)$ (which is $1$) and $f_{d}^\ast(\Sylv)$ (see \Cref{prop:BasicPropertiesFstar}). 
  \Cref{thm:f_star_induction} allows us to determine a third entry of $f^\ast(\Sylv)$ explicitly  for all $d, k\geq 0$.
The result will use the following easy lemma.

\begin{lemma}\label{lem:helper}
Let $\c T$ be a triangulation of a $d$-polytope. Let $f_{d-1}^{\partial}$ (resp. $f_{d-1}^\circ$) be the number of $(d-1)$-simplices of $\c T$ lying in exactly one (resp. two) $d$-simplices of $\c T$.
Moreover, let $f_d$  be the total number of $d$-simplices of $\c T$. Then:
\[
f_{d-1}^\circ+f_{d-1}^\partial=\frac{1}{2}\left((d+1)\cdot f_{d}+f_{d-1}^\partial\right).
\]
\end{lemma}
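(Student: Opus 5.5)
The plan is a standard double count of incidences between $d$-simplices and their $(d-1)$-dimensional faces in $\c T$. Count the set of pairs $(\sigma,\tau)$ with $\tau$ a $d$-simplex of $\c T$ and $\sigma$ a facet of $\tau$.

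First, count these pairs by $\tau$. Each $d$-simplex has exactly $d+1$ facets, so the number of pairs is $(d+1)\cdot f_d$.

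Second, count them by $\sigma$. Here the key geometric input is that every $(d-1)$-simplex of a triangulation of a $d$-polytope lies in either exactly one or exactly two $d$-simplices. We argue this as follows.
\begin{itemize}
\item Take a point $\b p$ in the relative interior of $\sigma$ and a short segment through $\b p$ transverse to the hyperplane spanned by $\sigma$.
\item Near $\b p$, this hyperplane separates a small ball into two open half-balls.
\item Every $d$-simplex containing $\sigma$ lies on one side of the hyperplane.
\item Two distinct $d$-simplices on the same side would overlap in a set with nonempty interior. This contradicts the face-intersection property, since their intersection must be a face of both and hence would have to be the whole of each.
\item Since $\pol$ is covered by $\c T$, each side that meets $\pol$ near $\b p$ is covered by some $d$-simplex containing $\sigma$.
\item If $\sigma$ lies in $\partial\pol$, only one side meets $\pol$. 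Otherwise both sides do, by convexity.
\end{itemize}
Hence each $\sigma$ lies in one or two $d$-simplices, and this count matches the partition into $f_{d-1}^\partial$ and $f_{d-1}^\circ$ used in the statement. Counting by $\sigma$ therefore gives $f_{d-1}^\partial + 2f_{d-1}^\circ$ pairs.

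Equating the two counts gives $(d+1)f_d = f_{d-1}^\partial + 2f_{d-1}^\circ$. Adding $f_{d-1}^\partial$ to both sides and dividing by $2$ yields
\[
f_{d-1}^\circ+f_{d-1}^\partial=\tfrac{1}{2}\bigl((d+1) f_d+f_{d-1}^\partial\bigr).
\]

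The only step needing care is the local pseudomanifold property, that each $(d-1)$-simplex lies in one or two $d$-simplices. Even that step is standard, and the statement's own definitions of $f_{d-1}^\partial$ and $f_{d-1}^\circ$ essentially presuppose it. I would include the short half-ball argument above for completeness.
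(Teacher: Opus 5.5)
Your proof is correct and is the same double count the paper uses: pairs ($d$-simplex, facet) counted once as $(d+1)f_d$ and once as $2f_{d-1}^\circ+f_{d-1}^\partial$. The only difference is that the paper takes the second count ``by definition.'' It thereby assumes that every $(d-1)$-simplex lies in one or two $d$-simplices, and your half-ball argument supplies exactly this pseudomanifold property.
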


\begin{proof}
We count pairs $(F, G)$, where $G\subseteq F\in \c T$ and $\dim F=\dim G+1=d$ in two ways. On the one hand, as each $d$-simplex $F\in\c T$ has $d+1$ faces of dimension $d-1$, there are $(d+1)f_d$ such pairs. On the other hand, by definition, this number is equal to $2f_{d-1}^\circ+f_{d-1}^\partial$.
\end{proof}

\begin{proposition}\label{prop:fStarExplicitValues}
For all $d\geq 1$, we have
\[
x_{d,d-1}
  =\frac{d+2}{2}(s_{d+1}-1)-\frac{d+3}{2}(s_d-1) \quad \text{and}\quad
  y_{d,d-1}
=\frac{d+2}{2}(s_{d+1}-1)-\frac{d+1}{2}(s_d-1).
\]
Moreover, for all $d\ge2$,
\[
  y_{d,d-2}-x_{d,d-2}=\frac{d+1}{2}(s_d-1).
\]
\end{proposition}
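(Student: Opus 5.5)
We apply \Cref{lem:helper} to a unimodular triangulation $\c T$ of $\Sylv$, which exists by \Cref{thm:fruTriangulationSylvesterSimplices}. By \Cref{rmk:BekteMcMullen}, $f^\ast_{d-1}(\Sylv)$ is the total number of $(d-1)$-simplices of $\c T$, and $f_d=f^\ast_d(\Sylv)=(k+1)(s_d-1)^2$. So it suffices to compute $f^\partial_{d-1}$, the number of boundary $(d-1)$-simplices. Since $\c T$ is unimodular, it induces a unimodular triangulation on each facet. Hence $f^\partial_{d-1}$ equals the sum, over the $d+1$ facets, of their normalized volumes, each measured in the lattice of its own affine hull.

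\textbf{Facet volumes.} There are three kinds of facets.
\begin{itemize}
\item For $i\in[d-1]$, the coordinate facet $x_i=0$ is the diagonal simplex $\conv(\b 0, s_j\b e_j \ (j\neq i,\ j<d), (k+1)(s_d-1)\b e_d)$. It lies in a coordinate sublattice, so its normalized volume is $(k+1)(s_d-1)\cdot\frac{s_d-1}{s_i}$.
\item The facet $x_d=0$ is $\Base$, of normalized volume $\prod_{i<d}s_i=s_d-1$.
\item The facet opposite $\b 0$ is, by \Cref{lem:UnimodularEquivalenceLayering} (it is $\polF^{k}_d$, and the affine map $U$ of that proof with $k+1$ in place of $k$ sends $\Base$ onto it), unimodularly equivalent to $\Base$. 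So it also has normalized volume $s_d-1$. For $d=1$ all facets are points and this can be checked directly.
\end{itemize}
Using $\sum_{i<d}\frac1{s_i}=1-\frac1{s_d-1}$, we get
\[
f^\partial_{d-1}=2(s_d-1)+(k+1)(s_d-1)(s_d-2).
\]

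\textbf{First two formulas.} \Cref{lem:helper} then gives
\[
f^\ast_{d-1}(\Sylv)=\tfrac12\Bigl((d+1)(k+1)(s_d-1)^2+(k+1)(s_d-1)(s_d-2)+2(s_d-1)\Bigr).
\]
Separating the coefficient of $k$ from the constant term gives
\[
x_{d,d-1}=\tfrac12(s_d-1)\bigl((d+2)s_d-(d+3)\bigr) \quad\text{and}\quad y_{d,d-1}=x_{d,d-1}+(s_d-1).
\]
Rewriting with $s_{d+1}-1=s_d(s_d-1)$ yields exactly the stated expressions.

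\textbf{The formula for $y_{d,d-2}-x_{d,d-2}$.} We use induction on $d$ via the first equality of \Cref{thm:f_star_induction} with $i=d-2$:
\[
y_{d,d-2}-x_{d,d-2}=y_{d-1,d-2}+\bigl(y_{d-1,d-3}-x_{d-1,d-3}\bigr).
\]
By the first part, $y_{d-1,d-2}=\frac{d+1}{2}(s_d-1)-\frac d2(s_{d-1}-1)$, and by the induction hypothesis the bracket equals $\frac d2(s_{d-1}-1)$. The terms $\frac d2(s_{d-1}-1)$ cancel, leaving $\frac{d+1}{2}(s_d-1)$.

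For the base case $d=2$, the same recursion with $i=0$ gives $y_{2,0}-x_{2,0}=y_{1,0}+1$. Since $\Sylv[k][1]=[0,k+1]$ has $k+2$ lattice points, $y_{1,0}=2$, so the difference is $3=\frac32(s_2-1)$.

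\textbf{Main obstacle.} The main point needing care is the facet-volume bookkeeping: each facet's normalized volume must be taken in its own sublattice. This is why the unimodular equivalence of \Cref{lem:UnimodularEquivalenceLayering} is needed for the slanted facet. One must also check that the restriction of $\c T$ to each facet is unimodular there, which holds because lattice simplices of a unimodular triangulation have unimodular faces.
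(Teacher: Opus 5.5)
Your proof is correct, but it runs the paper's argument in the opposite direction.

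\textbf{The paper's route.} It applies \Cref{lem:helper} to a unimodular triangulation of the $(d-1)$-dimensional base $\Base$. Here the boundary volume $\nVol(\partial\Base)=s_d-1$ is elementary: it comes from the coordinate facets plus the facet $\conv(s_1\b e_1,\dots,s_{d-1}\b e_{d-1})$, which has normalized volume $1$ by pairwise coprimality. This gives $y_{d,d-2}-x_{d,d-2}=f^\ast_{d-2}(\Base)=\frac{d+1}{2}(s_d-1)$ directly, with no induction. A single application of \Cref{thm:f_star_induction} in dimension $d+1$ then transfers this to $y_{d,d-1}$. Finally, $x_{d,d-1}$ follows from $y_{d,d-1}-x_{d,d-1}=\nVol(\Base)=s_d-1$.

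\textbf{Your route.} You double count on $\Sylv$ itself. This needs the $k$-dependent coordinate facet volumes and the normalized volume $s_d-1$ of the slanted facet. You obtain the latter via \Cref{lem:UnimodularEquivalenceLayering}; equivalently it is $\nVol(\Sylv)$ divided by the lattice distance $(k+1)(s_d-1)$ of $\b 0$. In exchange, you get $f^\ast_{d-1}(\Sylv)$ as an explicit affine function of $k$ in one computation. So $x_{d,d-1}$, $y_{d,d-1}$ and their difference $s_d-1$ all come out without the recursion, uniformly for $d\geq 1$. The price is that the third formula must then be recovered from the recursion by induction on $d$, with a base case.

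Both routes use the same recursion, just to pass between the two quantities in opposite directions.
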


\begin{proof}
By \Cref{lem:fstar_Base}, the difference $y_{d,d-2}-x_{d,d-2}$ is the number of $(d-2)$-dimensional faces of a unimodular triangulation of the $(d-1)$-simplex $\Sylv[-1]$. Applying \Cref{lem:helper} and using that the triangulation is unimodular we get
\[ 
y_{d,d-2}-x_{d,d-2}=
    f^\ast_{d-2}(\Base)=\frac{1}{2}\left(d\cdot f^\ast_{d-1}(\Base)+f_{d-2}^\partial(\Base)\right).
\]
We note that $f^\ast_{d-1}(\Base)=\nVol(\Base)=\prod_{i < d} s_i = s_d-1$ and since the restriction of $\c T$ to the boundary of $\Base$ is also unimodular, that $f_{d-2}^\partial(\Base)=\nVol(\partial\,\Base)$. 
The latter normalized volume can be computed as follows: Facets of $\Base$ the form $\conv\bigl(\{\b 0\}\cup\set{s_i\b e_i}{i\in [d-1]\ssm\{j\}}\bigr)$  for some $j\in [d-1]$ have normalized volume $\prod_{i\ne j} s_i = \frac{s_d-1}{s_j}$. It remains to compute the normalized volume of the facet $\conv(s_i\b e_i~|~i\in [d-1])$ opposing $\b 0$ in $\Sylv[-1]$.
Since the numbers $s_i$ are pairwise co-prime, a direct computation yields that it has normalized volume $1$. 
Consequently, we have:
$$\nVol(\partial\,\Sylv[-1]) = 1 + \sum_{j=1}^{d-1}\frac{s_d-1}{s_j} = 1 + (s_d-1)\cdot\left(1 - \frac{1}{s_d-1}\right) = s_d - 1 \,$$
and we obtain the claimed formula for $y_{d,d-2}-x_{d,d-2}$.
Applying \Cref{thm:f_star_induction} in dimension $d+1$ and
at index $i=d-1$ and using the  already proven expression, we obtain
\[
  \frac{d+2}{2}(s_{d+1}-1)
  =y_{d,d-1}+\frac{d+1}{2}(s_d-1),
\]
which is the claimed formula for $y_{d,d-1}$.  Finally,
\[
  y_{d,d-1}-x_{d,d-1}=f_{d-1}^\ast(\Base)=\nVol(\Base)=s_d-1,
\]
and the formula for $x_{d,d-1}$ follows.
\end{proof}

\begin{remark}
For a fixed $d$, there are $2\cdot (d+1)$ coefficients $x_{d, i}, y_{d, i}$ to compute to get the complete characterization of $f^\ast(\Sylv; t)$.
Six of these coefficients have an explicit value given in \Cref{prop:fStarExplicitValues}.
If we assume that we know all the coefficients for smaller dimensions, then we only need to compute $d-2$ new coefficients to get these $2\cdot(d+1)$ coefficients:
using the induction of \Cref{thm:f_star_induction} and the alternating sum of the coefficients, we would get the remaining ones.
\end{remark}

\section{The \texorpdfstring{$h^\ast$}{h*}-polynomials of Sylvester simplices are unimodal}\label{sec:Computing_hStar}

The goal of this section is to prove an explicit formula for the $h^\ast$-vector of a Sylvester simplex, show that it satisfies a certain symmetry, which will be proven via a more general statement about half-open Ehrhart-MacDonald reciprocity and finally, to prove that the $h^\ast$-vector is unimodal.
We start with a simple fact on the degree of the $h^\ast$-polynomials of $k$-Egyptian diagonal $d$-simplices.

\begin{proposition}\label{cor:Degree_hStarSylveterSimplices}
If $\polS$ is a $k$-Egyptian diagonal $d$-simplex, then $h^\ast(\polS; t)$ has degree $d$ if $k \geq 1$ and degree  $d-1$ if $k = 0$.
\end{proposition}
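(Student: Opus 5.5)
The plan is to read off the degree of $h^\ast(\polS;t)$ from the interior lattice points of dilates of $\polS$. The key tool is the standard fact that for a lattice $d$-polytope $\pol$ with $\deg h^\ast(\pol;t) = s$, the codegree $d+1-s$ is the smallest integer $m\geq 1$ such that $m\pol$ has an interior lattice point. Equivalently, via Ehrhart--Macdonald reciprocity,
\[
\sum_{m\geq 1}\bigl|(m\pol)^\circ\cap\Z^d\bigr|\, t^m = \frac{t^{d+1}h^\ast(\pol;1/t)}{(1-t)^{d+1}},
\]
so the lowest power of $t$ appearing on the left is $d+1-\deg h^\ast$. Since $h^\ast_d(\polS)$ equals the number of interior lattice points, the statement reduces to computing this codegree in each case.

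For $k\geq 1$, \Cref{prop:SylvesterSimplexBasicFacts}~\eqref{item:SylvInteriorLatticePoints} gives $h^\ast_d(\polS) = k \geq 1$. Since $h^\ast$ has degree at most $d$, the degree is exactly $d$, and nothing more is needed.

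For $k=0$, the same item gives $h^\ast_d(\polS)=0$, so the degree is at most $d-1$. It remains to show $h^\ast_{d-1}(\polS)\neq 0$, that is, that $2\polS$ has an interior lattice point. I would take the point $\b p = (1,\dots,1,1)$. It satisfies $p_i = 1 > 0$ for every $i$. By \Cref{prop:SylvesterSimplexBasicFacts}~\eqref{item:SylvInequalities}, $2\polS$ is cut out by $x_i\geq 0$ and $\sum_i x_i/a_i \leq 2$. For $\b p$ we have $\sum_i 1/a_i = 1$ by the $0$-Egyptian condition~\eqref{eq:k_egyptian} with $k=0$, and $1 < 2$, so $\b p$ lies in the interior of $2\polS$. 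The codegree is therefore exactly $2$, and the degree is $d+1-2 = d-1$.

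An alternative route avoiding reciprocity is to use the parallelepiped description~\eqref{eq:half_open_para} and exhibit a lattice point of $\Pi^\circ_{\polS}$ at height $d-1$. The reciprocity/codegree argument is cleaner, so I would use it.

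The main obstacle is only to state the codegree characterisation correctly, with the right index shift. One should also check the degenerate case $d=1$, $k=0$, where $\polS=[0,1]$ has $h^\ast = 1$ of degree $0 = d-1$. This is consistent with the argument, since the point $\b p = 1$ lies in the interior of $2\polS = [0,2]$.
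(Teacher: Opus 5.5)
Your proposal is correct and follows essentially the paper's approach: the codegree characterisation of $\deg h^\ast$, the interior lattice points from \eqref{item:SylvInteriorLatticePoints} when $k\geq 1$, and the point $(1,\dots,1)$ lying in the interior of $2\polS$ when $k=0$. The only difference is minor. You check directly from the inequalities and $\sum_i 1/a_i = 1$ that $(1,\dots,1)$ is interior to $2\polS$, whereas the paper deduces this from that point lying in the relative interior of the facet opposite $\b 0$.
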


\begin{proof}
 It is well-known (see \eg \cite[Theorem~4.5]{Beck_book}) that the degree of  $h^\ast(\polS; t)$ is  $\dim \polS - \ell + 1$, where $\ell\geq1$ is the smallest integer such that $\ell\cdot \polS$ contains a lattice point in its interior.
Since, by \Cref{prop:SylvesterSimplexBasicFacts}~\eqref{item:SylvInteriorLatticePoints}, the simplex $\polS$ has at least one interior lattice point  for $k\geq 1$, the claim follows in this case.

For $k = 0$, the simplex $\polS$ has no interior lattice point, but the simplex $2\cdot\polS$ has one because it contains the facet of $\polS$ opposite to $\b 0$ in its interior which itself contains a lattice point in its relative interior by \Cref{prop:SylvesterSimplexBasicFacts}~\eqref{item:SylvInteriorLatticePointsInFacets}.
Hence, the polynomial $h^\ast(\polS; t)$ has degree $\dim\polS - 1 = d-1$.
\end{proof}

We now show that $h^\ast$-polynomials of diagonal $d$-simplices have a special form.
Essentially this follows from the fact that the corresponding half-open parallelepiped are particularly well-behaved.

\begin{proposition}\label{prop:hStarDiagonalSimplices}
For integers $a_1,\dots, a_d > 0$, let $\psi(\b x) = \left\lceil\sum_{j=1}^d \frac{x_j}{a_j}\right\rceil$ for $\b x\in \Z^d$.
The $h^\ast$-polynomial of the diagonal simplex $\polD = \conv(\b 0, a_1 \b e_1, \ldots, a_d \b e_d)$ is given by:
$$h^\ast(\polD; t) = \sum_{\substack{\b x\in\Z^d\\0\leq x_j < a_j}} t^{\psi(\b x)}\, .$$
\end{proposition}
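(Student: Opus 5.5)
We plan to use the half-open fundamental parallelepiped description recalled in \Cref{subs:fandh}: $h^\ast_i(\polD)$ equals the number of lattice points of $\Pi^\circ_{\polD}$ at height $i$. So it suffices to give a bijection between the lattice points of $\Pi^\circ_{\polD}$ and the box $B = \set{\b x\in\Z^d}{0\leq x_j<a_j}$ under which the last coordinate corresponds to $\psi(\b x)$.

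With vertices $\b v_0=\b 0$ and $\b v_j = a_j\b e_j$, a point of $\Pi^\circ_{\polD}$ has the form
\[
\lambda_0\begin{pmatrix}\b 0\\1\end{pmatrix}+\sum_{j=1}^d\lambda_j\begin{pmatrix}a_j\b e_j\\1\end{pmatrix} = \Bigl(\lambda_1a_1,\dots,\lambda_da_d,\ \textstyle\sum_{j=0}^d\lambda_j\Bigr),\qquad 0\leq\lambda_j<1 .
\]
For this point to be integral we need $x_j\coloneqq\lambda_ja_j\in\Z$, which forces $x_j\in\{0,\dots,a_j-1\}$, i.e. $\b x\in B$, and $\lambda_j = x_j/a_j$. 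The last coordinate is $\lambda_0+\sum_{j\geq1}x_j/a_j$, and it must be an integer. Since $\lambda_0\in[0,1)$, there is exactly one admissible value of $\lambda_0$, namely $\lambda_0=\psi(\b x)-\sum_j x_j/a_j$, and with this choice the height is $\psi(\b x)$.

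Conversely, every $\b x\in B$ produces a lattice point of $\Pi^\circ_{\polD}$ in this way. The maps $\b x\mapsto$ point and point $\mapsto$ (first $d$ coordinates) are mutually inverse, and the height is $\psi(\b x)$. Summing $t^{\text{height}}$ over all these points then gives the formula.

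The only mildly delicate step is the uniqueness and range of $\lambda_0$: the quantity $\lceil s\rceil - s$ lies in $[0,1)$ for every real $s$, including when $s$ is an integer (then $\lambda_0=0$). So this step poses no real obstacle, and the proof is a direct verification.
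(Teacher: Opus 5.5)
Your proposal is correct and matches the paper's proof. Both arguments use the half-open parallelepiped, set $\lambda_j = x_j/a_j$ for $j\geq 1$ to identify its lattice points with the box $0\leq x_j<a_j$, and note that $\lambda_0\in[0,1)$ forces the height to be $\psi(\b x)$. You also spell out the existence direction, choosing $\lambda_0=\psi(\b x)-\sum_j x_j/a_j$, which the paper leaves implicit.
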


\begin{proof}
Using \cite[Corollary~3.11]{Beck_book}, we can compute the $h^\ast$-polynomial as follows
$$h^\ast(\polytope{D};t) = \sum_{(\b x, m)\in \Pi^\circ_{\polytope{D}}\cap\Z^{d+1}} t^m = \sum_{\substack{\b x\in\Z^d\\0\leq x_j < a_j}} \left(\sum_{\substack{m\in\N\\(\b x, m)\in \Pi^\circ_{\polytope{D}}\cap\Z^{d+1}}} t^m\right)\, .$$

We now evaluate the innermost sum.
Let $(\b x, m)\in\Pi^\circ_{\polytope{D}}\cap\Z^{d+1}$.
For $1\leq j\leq d$, since $x_j = a_j\lambda_j$ and $0\leq \lambda_j<1$, each $x_j$ is an integer satisfying $0\leq x_j<a_j$.
Hence, $\lambda_j = \frac{x_j}{a_j}$ and thus, $m = \lambda_0 + \sum_{j=1}^d \frac{x_j}{a_j}$.
Since $0\leq\lambda_0<1$ and $m\in\Z$, it follows that $m$ is uniquely determined by $\b x$ with $m = \psi(\b x)$, and the proof is complete.
\end{proof}

This general formula for computing the $h^\ast$-polynomial of diagonal simplices is the one we implemented to compute the $h^\ast$-vector of $\Sylv$  for $d \in\{5,6\}$, see \Cref{tab:hstar_coefficients}.
From there we derived the other Ehrhart-theoretic quantities for these dimensions.
Even though the following formulas (in \Cref{thm:hStarSylvester} and later in \Cref{thm:NumberOfLatticePoints,prop:local_h_star_symmetry,prop:boundaryhStarSylvester}) seem very unappealing as one needs to sum over $\prod_{i< d} s_i = s_d-1$ points,  this is a tremendous reduction from the initial complexity: a naive computation of  $h^\ast(\Sylv; t)$ via the half-open paralellepiped method would run through all points $\b x\in \Z^d$ with $1\leq x_j < s_j$ for $j\in [d]$, which is $\prod_{j \leq d} s_j = s_d(s_d-1) \approx s_d^2$ instead of our new complexity of approximately $s_d$.
This makes dimensions $5$ and $6$ reachable.

\begin{definition}
For a vector $\b x\in \Z^{d-1}$, we set
\[
	\mathdefn{N(\b x)} \coloneqq \sum_{j=1}^{d-1} x_j \frac{s_d-1}{s_j} ~~\in \Z\,,
\]
and we write its Euclidean division by $s_d-1$ as:
\[
	N(\b x) = \mathdefn{q(\b x)}\cdot (s_d - 1) + \mathdefn{r(\b x)}, ~~\text{where}~~ q(\b x), r(\b x)\in \N \text{, with } 0\leq r(\b x) < s_d - 1 \,.
\]
\end{definition}

\begin{theorem}\label{thm:hStarSylvester}
For all $k\geq 0$:
$$h^\ast(\Sylv; t) = \sum_{\substack{\b x\in \Z^{d-1} \\ 0\leq x_j < s_j}} \varphi_k(\b x; t) ~,$$
where
$$ 
\mathdefn{\varphi_k(\b x; t)} \coloneqq
\begin{cases}
t^{q(\b x)}+\bigl((k+1)\cdot(s_d-1)-1\bigr)t^{q(\b x)+1}, & \text{if } r(\b x)=0, \\
\bigl((k+1)(s_d-1 - r(\b x))+1\bigr)t^{q(\b x)+1} + \bigl((k+1)r(\b x)-1\bigr)t^{q(\b x)+2}, & \text{if } 1\leq r(\b x) < s_d-1.
\end{cases}
$$
\end{theorem}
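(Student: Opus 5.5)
We apply \Cref{prop:hStarDiagonalSimplices} to $\Sylv$ with $a_j = s_j$ for $j\in[d-1]$ and $a_d = (k+1)(s_d-1)$. This gives
\[
h^\ast(\Sylv;t)=\sum_{\b x}\sum_{y=0}^{(k+1)(s_d-1)-1} t^{\psi(\b x,y)},
\]
where $\b x$ ranges over $\Z^{d-1}$ with $0\le x_j<s_j$. It therefore suffices to show that, for each fixed $\b x$, the inner sum over the last coordinate $y$ equals $\varphi_k(\b x;t)$.

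Write $M \coloneqq (k+1)(s_d-1)$. Since $\frac{x_j}{s_j} = x_j\frac{s_d-1}{s_j}\cdot\frac{1}{s_d-1}$, we get
\[
\sum_{j<d}\frac{x_j}{s_j} + \frac{y}{M} = \frac{N(\b x)}{s_d-1}+\frac{y}{M} = \frac{(k+1)N(\b x) + y}{M}.
\]
Using $N(\b x) = q(\b x)(s_d-1)+r(\b x)$, this becomes
\[
q(\b x) + \frac{(k+1)r(\b x)+y}{M}.
\]
Hence $\psi(\b x,y) = q(\b x) + \left\lceil \frac{(k+1)r(\b x)+y}{M}\right\rceil$. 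Put $r = r(\b x)$, $q = q(\b x)$ and $c = (k+1)r$, so that $0\le c<M$, and let $y$ run over $0,\dots,M-1$. Then $c+y$ runs over the interval $[c, c+M-1]$, which has length $M-1<2M$. The ceiling is therefore $0$ exactly when $c+y=0$, it is $1$ when $1\le c+y\le M$, and it is $2$ when $M+1\le c+y\le c+M-1$.

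We now count the values of $y$ in each range. If $r=0$, then $c=0$: one value of $y$ gives exponent $q$, the other $M-1$ values give exponent $q+1$, and none give $q+2$. This matches the first case of $\varphi_k$. If $r\ge1$, then $c\ge k+1\ge1$, so no $y$ gives exponent $q$. The exponent is $q+1$ for $y\in[0, M-c]$, which is $M-c+1 = (k+1)(s_d-1-r)+1$ values. The exponent is $q+2$ for the remaining $c-1 = (k+1)r-1$ values. This matches the second case.

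The main point to check carefully is the ceiling computation. We need $c<M$, which holds because $r<s_d-1$. We also need the boundary $c+y=M$ to land in exponent $q+1$ and not $q+2$. The counts in the second case add up to $M$, which confirms the bookkeeping.
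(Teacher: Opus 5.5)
Your proof is correct and follows the paper's argument essentially step for step. You apply the diagonal-simplex formula, rewrite the exponent as $q(\b x)+\lceil((k+1)r(\b x)+x_d)/((k+1)(s_d-1))\rceil$, and count the values of the last coordinate separately in the cases $r(\b x)=0$ and $r(\b x)\geq 1$ (one small wording slip: the ceiling is at most $2$ because the endpoint satisfies $c+M-1<2M$, not because of the interval's length).
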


\begin{proof}
For the sake of readability, we write $M \coloneqq s_d-1$ in this proof.
By \Cref{prop:hStarDiagonalSimplices}, we get:
\begin{equation}\label{eq:Proof_hStarSylvesterSimplexDecompositionFormula}
h^\ast(\Sylv; t) = \sum_{\substack{\b x\in \Z^{d-1} \\ 0\leq x_j < s_j}} \sum_{x_d = 0}^{(k+1)M - 1} t^{\left\lceil \sum_{i=1}^{d-1}\frac{x_i}{s_i}+\frac{x_d}{(k+1)M}\right\rceil}.
\end{equation}

Fix $\b x\in \Z^{d-1}$ with $0\leq x_j < s_j$ for all $j\in [d-1]$.
For each $x_d$ the associated exponent is: 
\[
	\left\lceil \frac{1}{M}N(\b x) + \frac{x_d}{(k+1)M}\right\rceil = q(\b x)+\left\lceil \frac{(k+1)r(\b x) + x_d}{(k+1)M}\right\rceil.
\]
We distinguish two cases. \\
{\sf Case 1: $r(\b x) = 0$.} In this situation, we have $\left\lceil \frac{(k+1)r(\b x)+x_d}{(k+1)M}\right\rceil = \left\lceil \frac{x_d}{(k+1)M}\right\rceil$.
Therefore, if $x_d = 0$, the associated exponent equals $q(\b x)$, while for $1 \leq x_d \leq (k+1)M - 1$, the associated exponent equals $q(\b x) + 1$.
Thus, in this case, the innermost sum in \eqref{eq:Proof_hStarSylvesterSimplexDecompositionFormula} simplifies to:
$$t^{q(\b x)}+\bigl((k+1)M-1\bigr)t^{q(\b x)+1}\, .$$

\noindent{\sf Case 2: $1\leq r(\b x) < M$.} In this situation, $q(\b x) + \left\lceil \frac{(k+1)r(\b x)+x_d}{(k+1)M}\right\rceil$ is equal to $q(\b x) + 1$ if $0\leq x_d\leq (k+1)(M-r(\b x))$, and is  equal to $q(\b x) + 2$, \ie if $(k+1)(M-r(\b x)) < x_d < (k+1)M$.
Thus, in this case, the innermost sum in \eqref{eq:Proof_hStarSylvesterSimplexDecompositionFormula} simplifies to:
\[\bigl((k+1)(M-r(\b x))+1\bigr)t^{q(\b x)+1}
+ \bigl((k+1)r(\b x)-1\bigr)t^{q(\b x)+2} \,.\qedhere\]
\end{proof}

\subsection{Affine symmetry of $h^\ast$-polynomials}
The goal of this section is to show a particular symmetry of the $h^\ast$-vector of $\Sylv$. The proof will rely on the  following elementary lemma which is a special case of reciprocity for reciprocal domains, see \cite[Propositions 8.2 \& 8.3]{Stanley1974}.
For sake of readability, we include a short proof.
See \cite[Section~4]{Beck_book} for a detailed presentation of Ehrhart--McDonald reciprocity and other tools.

\begin{lemma}\label{lem:reciprocity}
Let $\pol=\{\b x\in\mathbb{R}^d~|~\langle \b a_j, \b x\rangle\leq b_j, \text{ for } j\in[r]\}$ 
be a $d$-dimensional lattice polytope, where the $\b a_j\in \mathbb{Z}^d$ are primitive facet normals and $b_j\in \mathbb{Z}$ for $j\in [r]$.
Suppose that there exists 
$\b u\in \pol\cap\mathbb{Z}^d$ such that $b_j-\langle \b a_j,\b u\rangle\in\{0,1\}$ for every $j\in[r]$. Let $B$ be the union of  facets of $\pol$ containing $\b u$. Then, for every $m \geq 0$, we have
\[
 \Ehr(\pol\ssm B;m) ~=~ \Ehr(\pol^\circ;m+1) ~=~ (-1)^d\Ehr(\pol;-m-1) \,.
\]
Equivalently,
\[
h^\ast(\pol\ssm B;t) = t^d \cdot h^\ast(\pol;\tfrac{1}{t}).
\]
\end{lemma}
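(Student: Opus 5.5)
The plan is to establish the first equality by an explicit bijection between lattice points, and then to obtain the second equality and the $h^\ast$-reformulation from standard Ehrhart--Macdonald reciprocity. The key observation is that, for primitive integer normals $\b a_j$ and integer $b_j$, a lattice point $\b y$ satisfies $\langle \b a_j,\b y\rangle < c$ for an integer $c$ if and only if $\langle \b a_j,\b y\rangle \leq c-1$. Since translation by $\b u$ preserves $\Z^d$, I will show that $\b y\mapsto \b y+\b u$ is a bijection from $(m\pol\ssm mB)\cap\Z^d$ onto $(m+1)\pol^\circ\cap\Z^d$. Here $m(\pol\ssm B)=m\pol\ssm mB$.

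For the bijection, first describe the two sets. Write $J_0=\{j : b_j-\langle\b a_j,\b u\rangle=0\}$; these are exactly the facets containing $\b u$, so $B=\bigcup_{j\in J_0}\{\langle \b a_j,\b x\rangle=b_j\}\cap\pol$. For $m\geq 1$, a lattice point $\b y$ lies in $m\pol\ssm mB$ if and only if the following holds:
\begin{itemize}
\item $\langle\b a_j,\b y\rangle\le mb_j$ for all $j$;
\item $\langle\b a_j,\b y\rangle< mb_j$, equivalently $\langle\b a_j,\b y\rangle\le mb_j-1$, for $j\in J_0$.
\end{itemize}
A lattice point $\b z$ lies in $(m+1)\pol^\circ$ if and only if $\langle\b a_j,\b z\rangle\le (m+1)b_j-1$ for all $j$. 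Substituting $\b z=\b y+\b u$ turns this into $\langle\b a_j,\b y\rangle\le mb_j+(b_j-\langle\b a_j,\b u\rangle)-1$. The correction term is $0$ for $j\in J_0$ and $1$ for $j\notin J_0$, so this is exactly the description of $m\pol\ssm mB$.

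The case $m=0$ needs separate care. Here the convention $\Ehr(\pol\ssm B;0)$ should equal $1$ when $B$ is not all of the boundary, or more precisely should agree with the polynomial. I would handle it by noting that the identity holds for all $m\geq1$; both sides are polynomials in $m$, namely the half-open Ehrhart polynomial and $\Ehr(\pol^\circ;m+1)$, so they agree at $m=0$ as polynomials. This is the one subtle point, and I would simply state that Ehrhart quantities are taken as polynomials.

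The second equality $\Ehr(\pol^\circ;m+1)=(-1)^d\Ehr(\pol;-m-1)$ is Ehrhart--Macdonald reciprocity \cite[Section~4]{Beck_book}.

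For the $h^\ast$ statement, sum over $m$:
\[
\sum_{m\ge0}\Ehr(\pol\ssm B;m)t^m=\sum_{m\ge0}\Ehr(\pol^\circ;m+1)t^m=t^{-1}\sum_{n\ge1}\Ehr(\pol^\circ;n)t^n .
\]
By the reciprocity form of the Ehrhart series, $\sum_{n\ge1}\Ehr(\pol^\circ;n)t^n = t^{d+1}h^\ast(\pol;1/t)/(1-t)^{d+1}$. This gives $h^\ast(\pol\ssm B;t)=t^d h^\ast(\pol;1/t)$.

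I expect no real obstacle beyond bookkeeping. The main points to be careful about are that the $b_j$ are integers and the normals are primitive, which makes the strict-to-nonstrict conversion valid, and the $m=0$ convention just discussed.
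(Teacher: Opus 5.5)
Your proposal is correct and is essentially the paper's proof: translation by $\b u$ gives the bijection $(m\pol\ssm mB)\cap\Z^d\to((m+1)\pol)^\circ\cap\Z^d$ via the strict-to-non-strict conversion for integral inequalities, followed by Ehrhart--Macdonald reciprocity and the same generating-function computation for the $h^\ast$-identity. Your separate treatment of $m=0$ is unnecessary if, as in the paper, one counts $m\pol\ssm mB$: boundedness gives $\{\b y:\langle\b a_j,\b y\rangle\le 0\ \forall j\}=\{\b 0\}=0\pol$, so your inequality description already holds at $m=0$. Also, your aside that the value there ``should equal $1$'' is wrong whenever $B\neq\emptyset$: it is $0$, which is exactly what the polynomial convention you fall back on yields.
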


\begin{proof}

We consider the affine map $\phi_{\b u}:\mathbb{R}^d\to\mathbb{R}^d$ which is the translation by $\b u$, \ie $\phi_{\b u}(\b x)=\b x+\b u$ for $\b x\in \mathbb{R}^d$. We claim that $\phi_{\b u}$ induces a bijection from $ (m\pol\ssm mB)\cap\mathbb{Z}^d$ to $((m+1)\pol)^\circ\cap\mathbb{Z}^d$. Since $\phi_{\b u}$ is clearly injective, it suffices to show that $\phi_{\b u}((m\pol\ssm mB)\cap\mathbb{Z}^d)=((m+1)\pol)^\circ\cap\mathbb{Z}^d$. 
First, let $\b x\in (m\pol\ssm mB)\cap\mathbb{Z}^d$. Since all vectors involved are integral, we then have
\[
\langle \b a_j, \b x\rangle\leq 
 \begin{cases}
  mb_j-1,&\text{if } b_j-\langle \b a_j,\b u\rangle=0,\\
  mb_j,&\text{if } b_j-\langle \b a_j,\b u\rangle=1.
 \end{cases}
\]
This implies 
\[
\langle \b a_j, \b x+\b u\rangle=\langle \b a_j, \b x\rangle + \langle \b a_j, \b u\rangle\leq \begin{cases}
mb_j-1+b_j=(m+1)b_j-1, &\text{if } b_j-\langle \b a_j,\b u\rangle=0,\\
mb_j+ b_j-1=(m+1)b_j-1, &\text{if }  b_j-\langle \b a_j,\b u\rangle=1.
\end{cases}
\]
Hence, $\phi_u(\b x)=\b x+\b u\in ((m+1)\pol)^\circ\cap\mathbb{Z}^d$. 
The reverse containment follows by reversing the arguments. 
It follows that $ \Ehr(\pol\ssm B;m)=\Ehr(\pol^\circ;m+1)= (-1)^d\Ehr(\pol;-m-1)$, where the  second equality follows from classical Ehrhart--Macdonald reciprocity.
On the level of $h^\ast$-polynomials, using Ehrhart--Macdonald reciprocity again, this equality translates as follows
\[
\frac{h^\ast(\pol\ssm B;t)}{(1-t)^{d+1}}= \sum_{m\geq0}\Ehr(\pol\ssm B;m)t^m=\sum_{m\geq0}\Ehr(\pol^\circ;m+1)t^m
 =\frac{1}{t}\sum_{m\geq1}\Ehr(\pol^\circ;m)t^m
 =\frac{t^d\cdot h^\ast(\pol;\tfrac{1}{t})}{(1-t)^{d+1}} \,.
\]
This finishes the proof.
\end{proof}

We can apply the previous lemma to show the following symmetry.
\begin{proposition}\label{prop:SymmetryhStar}
We have 
\[
h^\ast(\Sylv[1];t)-h^\ast(\Sylv[0];t) = t^d\cdot h^\ast(\Sylv[0];\tfrac{1}{t}).
\]
\end{proposition}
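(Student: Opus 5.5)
By the layering decomposition in the proof of \Cref{thm:HaveAffineCoefficients}, we have $h^\ast(\Sylv[1];t) - h^\ast(\Sylv[0];t) = H_2(t) = h^\ast(\Sylv[0]\ssm\Sylv[-1];t)$. So the plan is to identify this half-open simplex with $\pol\ssm B$ as in \Cref{lem:reciprocity}, taking $\pol = \Sylv[0]$. The conclusion would then be $h^\ast(\Sylv[0]\ssm\Sylv[-1];t) = t^d h^\ast(\Sylv[0];\tfrac1t)$, which is exactly the claim.

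The difficulty is that $\Sylv[0]\ssm\Sylv[-1]$ removes the single facet $\{x_d=0\}$. \Cref{lem:reciprocity}, on the other hand, removes the union $B$ of facets containing a lattice point $\b u$ at lattice distance $\le 1$ from every facet. We therefore look for a $\b u$ that lies on the facet $x_d=0$ and at lattice distance exactly $1$ from every other facet.

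\begin{itemize}
\item The natural choice is $\b u = (1,\dots,1,0)$, the unique relative interior lattice point of the facet opposite $(s_d-1)\b e_d$ by \Cref{prop:SylvesterSimplexBasicFacts}~\eqref{item:SylvInteriorLatticePointsInFacets2}.
\item For the facets $x_j\ge 0$ with $j<d$, the primitive normal is $-\b e_j$ with $b_j=0$, so $b_j-\langle\b a_j,\b u\rangle = 1$.
\item For $x_d\ge 0$ the slack is $0$, so this facet lies in $B$.
\item For the facet opposite $\b 0$, the primitive inequality with $k=0$ is $x_d+\sum_{i<d}\frac{s_d-1}{s_i}x_i\le s_d-1$; it is primitive because the coefficient of $x_d$ is $1$. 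Its slack at $\b u$ is
\[
(s_d-1)-\sum_{i<d}\frac{s_d-1}{s_i} = (s_d-1)\Bigl(1-\sum_{i<d}\tfrac1{s_i}\Bigr) = 1,
\]
using the Sylvester identity $\frac{1}{s_d-1}+\sum_{i<d}\frac1{s_i}=1$.
\end{itemize}

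Hence $B$ is exactly the facet $\Sylv[-1]$, and $\pol\ssm B=\Sylv[0]\ssm\Sylv[-1]$.

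Two points remain to be matched. First, the lemma's $h^\ast$ of a half-open polytope is defined through $\sum_m \Ehr(\pol\ssm B;m)t^m$. This agrees with the half-open $h^\ast$ used in the layering proof, since both are generating functions of lattice points in dilates. Second, the layering identity for $k=1$ gives $h^\ast(\Sylv[1];t)=H_1+H_2$. Combining these yields the proposition.

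The main, though mild, obstacle is the slack computation for the facet opposite $\b 0$. It needs both the Sylvester identity and the primitivity of the normal vector, and that computation is the only place where the Sylvester structure is used.
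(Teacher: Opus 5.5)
Your proof is correct and follows essentially the paper's route. You identify the difference with the half-open layer $\Sylv[0]\ssm\Base$ via \Cref{lem:UnimodularEquivalenceLayering} and then apply \Cref{lem:reciprocity} to $\Sylv[0]$, and the only variation is your choice of point. Your $\b u=(1,\dots,1,0)$ makes $B=\Base$ directly, whereas the paper takes $\b u=(1,\dots,1)$, so that $B$ is the facet $\polF_d^0$ opposite $\b 0$, and therefore needs the extra step $h^\ast(\Sylv[0]\ssm\Base;t)=h^\ast(\Sylv[0]\ssm\polF_d^0;t)$ coming from $\Base\simeq_{\mathbb Z}\polF_d^0$; your choice removes that step.
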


\begin{proof}

Since $\Sylv[1]=\Sylv[0]\sqcup (\Sylv[1]\ssm\Sylv[0])$, it follows from additivity of the Ehrhart series that 
$h^\ast(\Sylv[1];t)-h^\ast(\Sylv[0];t)=h^\ast(\Sylv[1]\ssm \Sylv[0];t)$.
By 
\Cref{lem:UnimodularEquivalenceLayering}, the later is equal to the polynomial $h^\ast(\Sylv[0]\ssm \Base;t)$.

Again by \Cref{lem:UnimodularEquivalenceLayering}, the facet $\polF_d^{0}$ of $\Sylv[0]$ (\ie the facet opposing the vertex $\b 0$ in $\Sylv[0]$) is unimodular equivalent to $\Base$ which implies that $h^\ast(\Sylv[0]\ssm \Base;t)=h^\ast(\Sylv[0]\ssm \polF_d^{0};t)$. 
Consider the point $(1,\ldots,1)$. Using \Cref{prop:SylvesterSimplexBasicFacts}~\eqref{item:SylvInequalities}~\&~\eqref{item:SylvInteriorLatticePointsInFacets}, one easily verifies that $(1,\ldots,1)$ lies in the relative interior of $\polF_d^{0}$ and that it has lattice distance $1$ one from each other facet of~$\Sylv[0]$.
Combining the preceding identities and using \Cref{lem:reciprocity} we conclude that 
\[
h^\ast(\Sylv[1];t)-h^\ast(\Sylv[0];t) = h^\ast(\Sylv[0]\ssm \polF_d^{0};t) = t^d \cdot h^\ast(\Sylv[0];\tfrac{1}{t}) \,.\qedhere
\]
\end{proof}

\begin{corollary}\label{cor:Symmetry_h_star}
For $i\in [d-1]$, let $h^\ast_i(\Sylv) = a_{d,i}\cdot k + b_{d,i}$ be the coefficient of $h^\ast(\Sylv; t)$ on $t^i$.
Then: $a_{d,i} = b_{d, d-i}$.
\end{corollary}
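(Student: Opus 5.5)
The plan is to read off the coefficient identity directly from \Cref{prop:SymmetryhStar}, combined with the affine description of \Cref{thm:HaveAffineCoefficients}. By definition, $h^\ast_i(\Sylv) = a_{d,i}k + b_{d,i}$. Evaluating at $k=0$ gives $h^\ast_i(\Sylv[0]) = b_{d,i}$. Evaluating at $k=1$ gives $h^\ast_i(\Sylv[1]) = a_{d,i} + b_{d,i}$. Hence the coefficient of $t^i$ in $h^\ast(\Sylv[1];t) - h^\ast(\Sylv[0];t)$ is exactly $a_{d,i}$. Equivalently, by the proof of \Cref{thm:HaveAffineCoefficients}, $a_{d,i}$ is the $t^i$-coefficient of $H_2(t) = h^\ast(\Sylv[0]\ssm\Base;t)$.

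Next I would compute the $t^i$-coefficient of the right-hand side $t^d\cdot h^\ast(\Sylv[0];\tfrac1t)$ in \Cref{prop:SymmetryhStar}. Writing
\[
h^\ast(\Sylv[0];t)=\sum_{j=0}^d b_{d,j}t^j,
\]
we get
\[
t^d h^\ast(\Sylv[0];\tfrac1t)=\sum_{j=0}^d b_{d,j}t^{d-j}.
\]
So the coefficient of $t^i$ is $b_{d,d-i}$. Comparing coefficients on both sides of the identity in \Cref{prop:SymmetryhStar} then yields $a_{d,i}=b_{d,d-i}$ for every $0\le i\le d$, and in particular for $i\in[d-1]$.

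The only points needing care are bookkeeping ones. First, the affine expressions are genuine polynomial identities in $k$, so specializing at $k=0,1$ is legitimate; this is guaranteed by the layering decomposition in \Cref{thm:HaveAffineCoefficients}. Second, degree conventions must be handled consistently: by \Cref{cor:Degree_hStarSylveterSimplices}, $b_{d,d}=0$, and this matches $a_{d,0}=0$, since the half-open simplex $\Sylv[0]\ssm\Base$ contains no lattice point at height $0$ in the cone.

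There is no real obstacle here. All the substance lies in \Cref{prop:SymmetryhStar} (via \Cref{lem:reciprocity} and \Cref{lem:UnimodularEquivalenceLayering}), and the corollary is a direct coefficient comparison.
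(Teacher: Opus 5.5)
Your proposal is correct and follows essentially the paper's route. The paper writes $h^\ast(\Sylv;t) = k\bigl(h^\ast(\Sylv[1];t)-h^\ast(\Sylv[0];t)\bigr)+h^\ast(\Sylv[0];t) = k\, t^d h^\ast(\Sylv[0];\tfrac1t)+h^\ast(\Sylv[0];t)$ via \Cref{prop:SymmetryhStar} and compares $t^i$-coefficients, which is exactly your specialization at $k=0,1$ followed by coefficient comparison.
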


\begin{proof}
By \Cref{thm:HaveAffineCoefficients}, each coefficient of $h^\ast(\Sylv;t)$ is indeed an affine function of $k$, in particular: $b_{d, i} = h^\ast_i(\Sylv[0])$ and $b_{d, i} + a_{d, i} = h^\ast_i(\Sylv[1])$.
With the notations of \Cref{prop:SymmetryhStar}, we get:
$h^\ast(\Sylv; t) = k\cdot h^\ast(\Sylv[d][1];t) - (k-1)\cdot h^\ast(\Sylv[d][0];t) = k\cdot\bigl(h^\ast(\Sylv[d][1];t) - h^\ast(\Sylv[d][0];t)\bigr) + h^\ast(\Sylv[d][0];t) = k\cdot t^d h^\ast(\Sylv[d][0];\tfrac{1}{t}) + h^\ast(\Sylv[d][0];t)$.
In particular, on the right-hand-side, the coefficient on $k$ in the coefficient of $t^i$ is $b_{d, d-i}$.
As this same coefficient is $a_{d, i}$ on the left-hand-side, this proves the claimed symmetry.
\end{proof}

\subsection{$h^\ast$-unimodality}
The symmetry of \Cref{cor:Symmetry_h_star} enables us to prove that the $h^\ast$-vector of $\Sylv$ is a unimodal sequence.
This can be verified by hand for $d\leq 6$ on \Cref{tab:hstar_coefficients}. 
To this end, we use this result from Athanasiadis (already proven by Hibi and Stanley in 1999, but unpublished).

\begin{quote}
{\cite[Theorem~1.3]{Athanasiadis2004-hStarVectorsAndEulerianPolynomials}} 
If a lattice polytope $\pol\subseteq\R^d$ has a regular and unimodular triangulation, then $h^\ast(\pol; t) = \sum_{k = 0}^d h^\ast_k t^k$ satisfies:
$h^\ast_{\left\lfloor\frac{d+1}{2}\right\rfloor} \geq \dots \geq h^\ast_{d-1} \geq h^\ast_d$.
 \label{item:AthanasiadisDecreasinghStar}
\end{quote}

We can now prove \Cref{thmC}.

\begin{theorem}\label{thm:hStarUnimodal}
For $d \geq 1$ and $k \geq 0$, the coefficients $h_i^\ast(\Sylv)$ form a unimodal sequence with mode at $\frac{d}{2}$ if $d$ is even, and with mode at $\frac{d-1}{2}$ or $\frac{d+1}{2}$ if $d$ is odd.
\end{theorem}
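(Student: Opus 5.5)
We combine Athanasiadis' inequality (the second half of the sequence is weakly decreasing) with the symmetry of \Cref{prop:SymmetryhStar} (which mirrors that decrease into an increase on the first half). Throughout, write $b_i \coloneqq h^\ast_i(\Sylv[0])$. By \Cref{cor:Symmetry_h_star} and its proof,
\[
h^\ast_i(\Sylv) = b_i + k\, b_{d-i}
\]
for all $0\le i\le d$ (for $i=0$ and $i=d$ this is checked directly, since $b_d=0$ and $b_0=1$). Because $k\ge 0$, it suffices to show two things: the sequence $(b_i)$ is weakly increasing up to index $\lfloor d/2\rfloor$, and it is weakly decreasing from index $\lfloor (d+1)/2\rfloor$ onward. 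Granting this, the reversed sequence $(b_{d-i})_i$ is weakly increasing for $d-i\ge \lfloor (d+1)/2\rfloor$, that is for $i\le \lfloor d/2\rfloor$, and weakly decreasing for $i\ge \lceil d/2\rceil$. A nonnegative combination of two sequences that both increase on $[0,\lfloor d/2\rfloor]$ and both decrease on $[\lceil d/2\rceil,d]$ has the same property, so it is unimodal. For $d$ even the mode is at $d/2$; for $d$ odd the mode is at $(d-1)/2$ or $(d+1)/2$, the only undetermined comparison being between these two middle entries.

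For the decreasing half, we apply Athanasiadis' theorem quoted above to $\Sylv[0]$, which has a regular unimodular triangulation by \Cref{thm:fruTriangulationSylvesterSimplices}. This gives $b_{\lfloor (d+1)/2\rfloor}\ge\dots\ge b_d$.

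For the increasing half, we apply the same theorem to $\Sylv[1]$, which also has a regular unimodular triangulation, to obtain $h^\ast_{\lfloor(d+1)/2\rfloor}(\Sylv[1])\ge\dots\ge h^\ast_d(\Sylv[1])$. By the case $k=1$ of the formula above, $h^\ast_i(\Sylv[1]) = b_i + b_{d-i}$, and this alone does not isolate $b_{d-i}$. So we instead use \Cref{prop:SymmetryhStar} and its proof directly: $t^d h^\ast(\Sylv[0];1/t)$ is the $h^\ast$-polynomial of the half-open simplex $\Sylv[0]\ssm\polF_d^0$, whose coefficients are $b_{d-i}$. We then need a half-open version of Athanasiadis' decreasing property. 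The approach is to take a regular unimodular triangulation of $\Sylv[0]$ (which exists by \Cref{thm:fruTriangulationSylvesterSimplices}) and remove a facet. Note that Athanasiadis' argument proceeds via the boundary/local decomposition $h^\ast = \sum_F \ell$-terms, which is compatible with removing faces of a regular triangulation. Applying it to the half-open simplex should give that $(b_{d-i})$ is weakly decreasing for $i\ge\lfloor(d+1)/2\rfloor$, which is equivalent to $b_0\le b_1\le\dots\le b_{\lceil d/2\rceil-1}$. If this half-open extension resists, a fallback is the Hibi-type inequality $h^\ast_0+\dots+h^\ast_{i}\le h^\ast_d+\dots+h^\ast_{d-i}$, combined with the lower-bound theorem $h^\ast_1\le h^\ast_i$.

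The main obstacle is precisely this increasing half: establishing that $(b_i)$ weakly increases up to the middle, i.e. justifying a version of Athanasiadis' inequality for the half-open simplex $\Sylv[0]\ssm\polF_d^0$. The remaining middle index, needed to reach $\lfloor d/2\rfloor$ exactly, must be checked against the parity bookkeeping above.
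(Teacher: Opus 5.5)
Your reduction is correct: with $b_i=h^\ast_i(\Sylv[0])$ and $h^\ast_i(\Sylv)=b_i+k\,b_{d-i}$, it suffices that $(b_i)$ weakly increases on $0\le i\le\lfloor d/2\rfloor$ and weakly decreases from $\lfloor (d+1)/2\rfloor$ on. The decreasing half does follow from Athanasiadis' theorem applied to $\Sylv[0]$. The gap is that you leave the increasing half open, and it is half of the proof. The ``half-open version of Athanasiadis' inequality'' for $\Sylv[0]\ssm\polF_d^0$ is only asserted (``should give''); you give no argument that Athanasiadis' proof survives deleting a facet, and that is a different statement from the one he proves. Your fallback does not close the gap either. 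Hibi/Stanley-type inequalities compare partial sums $h^\ast_0+\dots+h^\ast_i$ with sums taken from the top, and the lower bound $h^\ast_1\le h^\ast_i$ only compares against $h^\ast_1$. None of these gives $b_j\le b_{j+1}$ termwise. Separately, the ``remaining middle index'' worry is a bookkeeping slip: decrease of $(b_{d-i})_i$ for $i\ge\lfloor (d+1)/2\rfloor$ is equivalent to $b_0\le\dots\le b_{\lfloor d/2\rfloor}$, not to $b_0\le\dots\le b_{\lceil d/2\rceil-1}$.

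The missing idea is elementary. You applied Athanasiadis only at $k=1$, where $b_i+b_{d-i}$ indeed does not isolate $b_{d-i}$. But by \Cref{thm:fruTriangulationSylvesterSimplices}, $\Sylv$ has a regular unimodular triangulation for \emph{every} $k\ge0$. Hence for each $i\ge\lfloor (d+1)/2\rfloor$,
\[
(b_i-b_{i+1})+k\,(b_{d-i}-b_{d-i-1})\;\ge\;0\qquad\text{for all integers } k\ge 0 .
\]
An affine function of $k$ that stays nonnegative for all $k\ge 0$ must have nonnegative slope (divide by $k$ and let $k\to\infty$), so $b_{d-i}\ge b_{d-i-1}$. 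With $j=d-i-1$ this reads $b_j\le b_{j+1}$ for $0\le j\le\lfloor d/2\rfloor-1$, which is exactly the increasing half; your first paragraph then finishes the argument. This is the paper's route. It extracts $a_{d,i}\ge a_{d,i+1}$ on the upper half from large $k$, then transfers this to the lower half of $(b_{d,i})$ via $a_{d,i}=b_{d,d-i}$ (\Cref{cor:Symmetry_h_star}).
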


\begin{proof}
We will show the stronger statement that both sequences $(a_{d,0}, \dots, a_{d,d})$ and $(b_{d,0}, \dots, b_{d,d})$ defined in by $h^\ast_i(\Sylv) = a_{d, i}\cdot k + b_{d, i}$ are unimodal with mode  $\left\lceil\frac{d-1}{2}\right\rceil$ or $\left\lfloor\frac{d+1}{2}\right\rfloor$.
This will show the claim since the sum of (positively scaled) unimodal sequences whose modes differ by at most~$1$, is again unimodal. 

Since, by \Cref{thm:fruTriangulationSylvesterSimplices},  $\Sylv$ admits a regular,  unimodular triangulation for any $k\geq 0$, it follows from \cite[Theorem 1.3]{Athanasiadis2004-hStarVectorsAndEulerianPolynomials} that the second half of $h^\ast(\Sylv)$ (starting with $h^\ast_{\lfloor \frac{d+1}{2}\rfloor}(\Sylv)$) is decreasing.
 For $k=0$, this gives  $b_{d,i} \geq b_{d,i+1}$ for $i \geq \left\lfloor\frac{d+1}{2}\right\rfloor$. For $k\geq 1$, we get $a_{d,i}\cdot k + b_{d,i} \geq a_{d,i+1} \cdot k + b_{d,i+1}$ for $i\geq \left\lfloor\frac{d+1}{2}\right\rfloor$, \ie $a_{d,i} + \frac{b_{d,i}}{k} \geq a_{d,i+1} + \frac{b_{d,i+1}}{k}$ for $i\geq \left\lfloor\frac{d+1}{2}\right\rfloor$.
In particular, taking $k$ large enough (\eg $k > \max_i{b_{d,i}}$) ensures that $a_{d,i}\geq a_{d,i+1}$ for $i\geq \left\lfloor\frac{d+1}{2}\right\rfloor$.

Using \Cref{cor:Symmetry_h_star}, we obtain
\[
b_{d,i}=a_{d,d-i}\leq a_{d,d-i-1}=b_{d,i+1} \text{ for } 0\leq i\leq \left\lfloor \frac{d-2}{2}\right \rfloor
\]
and the analogous argument shows the $a_{d,i}\leq  a_{d,i+1}$ for $0\leq i\leq \left\lfloor\frac{d-2}{2}\right\rfloor$.
It follows that $(a_{d,0}, \dots, a_{d,d})$ and $(b_{d,0}, \dots, b_{d,d})$ are unimodal sequences with mode at  $\left\lceil\frac{d-1}{2}\right\rceil$ or $\left\lfloor\frac{d+1}{2}\right\rfloor$.
\end{proof}

\begin{remark}
Having a regular unimodular triangulation has strong consequences, as mentioned in \Cref{rmk:BekteMcMullen} and Athanasiadis' Theorem \cite[Theorem~1.3]{Athanasiadis2004-hStarVectorsAndEulerianPolynomials}.
However, recently, Ferroni \cite{ferroni2026unimodality} gave an example of a lattice polytope with a regular unimodular triangulation whose $h^\ast$-vector is not unimodal.
\end{remark}

\begin{remark}\label{rmk:fStarUnimodal}
It is natural to ask whether the $f^\ast$-vector of $\Sylv$ is unimodal. This is true for $1\leq d\leq 3$ due to \cite[Theorem 5]{BeckDeligeorgakiHlavacekValenciaPorras2024-fStarVectors} and for $d \leq 7$ and $k \geq 0$ using \Cref{tab:f_star_coefficients}.
However, we do not know whether this remains true in all dimensions since unimodality of the  
$h^\ast$-vector does not necessarily imply unimodality of the corresponding $f^\ast$-vector.
\end{remark}

\subsection{Number of lattice points}
Thanks to the closed-form expression for $h^\ast(\Sylv)$, we can also provide an explicit formula for  the number of lattice points contained in $\Sylv$.

\begin{theorem}\label{thm:NumberOfLatticePoints}
For $k\geq 0$:
$$\bigl|\Sylv \cap \Z^d\bigr| = d+1 + h_1^\ast(\Sylv) = d + (k+1)(s_d-1) +  \sum_{\substack{0\leq x_i<s_i\\ 1\leq N(\b x)\leq s_d-2}} \bigl((k+1)((s_d-1)-N(\b x))+1\bigr)\, .$$ 
\end{theorem}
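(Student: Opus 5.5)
The plan is to extract the coefficient of $t^1$ from the formula of \Cref{thm:hStarSylvester}. The first equality $|\Sylv\cap\Z^d| = d+1+h_1^\ast(\Sylv)$ is the standard fact recalled in \Cref{subs:fandh}. For the second, write $M = s_d-1$. We then determine, for each $\b x\in\Z^{d-1}$ with $0\leq x_j<s_j$, the contribution of $\varphi_k(\b x;t)$ to $[t^1]$. This contribution depends only on $q(\b x)$ and $r(\b x)$.

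\emph{Case $r(\b x)=0$.} The polynomial $\varphi_k$ contributes to $t^1$ only if $q(\b x)=1$ (coefficient $1$) or $q(\b x)=0$ (coefficient $(k+1)M-1$). The latter happens only when $N(\b x)=0$, i.e.\ $\b x=\b 0$, since all summands of $N$ are non-negative.

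\emph{Case $1\leq r(\b x)<M$.} The polynomial contributes $(k+1)(M-r(\b x))+1$ when $q(\b x)=0$, and $(k+1)r(\b x)-1$ would only occur at $t^{1}$ when $q(\b x)=-1$, which is impossible. When $q(\b x)=0$ we have $r(\b x)=N(\b x)$, so these $\b x$ are exactly those with $1\leq N(\b x)\leq M-1 = s_d-2$, and they contribute $(k+1)(M-N(\b x))+1$. This matches the sum in the statement.

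It then remains to handle the $\b x$ with $N(\b x)=M$, i.e.\ $q=1$ and $r=0$. These contribute $1$ each, and together with $\b 0$ they must account for the leftover $d+(k+1)M - (d+1) - ((k+1)M-1) = 0$. So we must show that no $\b x$ in the box satisfies $N(\b x)=s_d-1$, i.e.\ $\sum_j x_j/s_j = 1$ with $0\leq x_j<s_j$. This is the main (though easy) obstacle. The first idea is to use pairwise coprimality of the $s_j$: reducing $N(\b x)=\sum_j x_j\prod_{i\neq j}s_i$ modulo $s_j$ gives $x_j\prod_{i\ne j} s_i\equiv N(\b x)\pmod{s_j}$. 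If $N(\b x)=\prod_i s_i$, this forces $s_j\mid x_j$, hence $x_j=0$ for all $j$, contradicting $N=M\geq1$. (For $d=1$ the box is a single point and the check is trivial.) Likewise $N(\b x)$ is never a positive multiple of $M$ inside the box, so $r(\b x)=0$ forces $\b x=\b 0$. This cleanly shows that only $\b 0$ falls in the first case.

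Assembling, $h_1^\ast(\Sylv) = ((k+1)M-1) + \sum_{1\leq N(\b x)\leq s_d-2}\bigl((k+1)(M-N(\b x))+1\bigr)$. Adding $d+1$ gives the claimed formula $d+(k+1)(s_d-1)+\sum(\cdots)$.
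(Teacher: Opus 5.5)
Your proposal is correct and follows essentially the same route as the paper: you extract the coefficient of $t^1$ in $\varphi_k(\b x;t)$ by splitting on whether $r(\b x)=0$ and on the value of $q(\b x)$. You then rule out $N(\b x)=s_d-1$ by reducing modulo $s_j$ and using pairwise coprimality of the Sylvester numbers, which forces $\b x=\b 0$.
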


\begin{proof}

The first equality is a general property of $h^\ast$ vectors discussed in Section \ref{ssec:ehrhart}.

For the second equality, we evaluate the formula from \Cref{thm:hStarSylvester} for $h_1^\ast$. It follows that $h_1^\ast(\Sylv)$ is the sum of the coefficients of $t$ of $\varphi_k(\b x; t)$ over all $\b x\in \Z^{d-1}$ such that $0\leq x_i < s_i$ for all $i\in [d-1]$. 
The coefficient of $t$ in $\varphi_k(\b x; t)$ is non-zero if and only if 
$(r(\b x) = 0$ and $q(\b x) \in \{0, 1\})$ or $(r(\b x) \ne 0$ and $q(\b x) = 0)$.

Since $N(\b x) = q(\b x) \cdot (s_d - 1) + r(\b x)$, the first case is equivalent to $N(\b x) \in\{0, s_d - 1\}$.
We have $N(\b x) = 0$ if and only if $\b x = \b 0$, and the coefficient of $t$ in $\varphi_k(\b 0; t)$ is equal to $(k+1)\cdot(s_d-1) - 1$.
On the other hand, %
if $N(\b x) = \sum_{i=1}^{d-1}x_i\frac{s_d - 1}{s_i}=s_d - 1$, then, since $s_j$ divides $s_d-1$ for every $1\leq j\leq d-1$ and $\frac{s_d - 1}{s_i}$ if $i\neq j$, it follows that $s_j$ also  divides $x_j\frac{s_d - 1}{s_j}$ for every $1\leq j\leq d-1$.  %
As $\gcd\!\left(\frac{s_d - 1}{s_j}, s_j\right) = 1$, this implies that $s_j$ divides $x_j$ and using that $0\leq x_j < s_j$, we conclude that $x_j = 0$ for all $j$, contradicting $N(\b x) = s_d - 1 \ne 0$.

To handle the second case, note  that $r(\b x) \ne 0$ and $q(\b x) = 0$ if and only if $N(\b x) \in \{1, \dots, s_d - 2\}$.
In this case, the coefficient of $t$ in $\varphi_k(\b x; t)$ is equal to $(k+1)\cdot(s_d - 1 - N(\b x)) + 1$.

The formula for $h_1^\ast(\Sylv)$ (and consequently $\bigl|\Sylv\cap\mathbb{Z}^d\bigr|$) follows by combining both cases.
\end{proof}

Note that \Cref{conj:hstar} states that $h^\ast_1(\pol) \leq h_1^\ast(\Sylv)$, where $\pol$ is any lattice $d$-polytope with exactly $k$ interior lattice points.
The significance of \Cref{thm:NumberOfLatticePoints} is that it provides an explicit upper bound that can be directly compared with $h_1^\ast(\pol)$.

To complete the study of Ehrhart-theoretic properties of $\Sylv$, we also investigate  its local $h^\ast$- and boundary $h^\ast$-polynomials.
However, since the ideas behind their proofs are similar to those in \Cref{prop:hStarDiagonalSimplices} and \Cref{thm:hStarSylvester}, we decided to include these results in  Appendix~\ref{Appen:Local_boundary}.

\section{Magic positivity for small dimensions but not for \texorpdfstring{$d = 7$}{d = 7}}\label{sec:MagicPositivity}

Two natural problems are to determine when a lattice polytope $\pol$ is Ehrhart-positive and to study algebraic properties of its $h^\ast$-polynomial such as real-rootedness. 
In general, these two properties are difficult to tackle.
Yet, the following notion allows us to treat them simultaneously.

We write the Ehrhart polynomial $\Ehr(\pol; t)$ of a $d$-dimensional lattice polytope $\pol$ in the so-called \defn{magic basis}
$\{t^i(t+1)^{d-i} ~|~ 0 \leq i \leq d \}$ as follows
$$
\Ehr(\pol; t) = \sum_{i = 0}^d c_i\, t^i(t + 1)^{d-i} \,.
$$
If $c_i \geq 0$ for every $i = 0,\ldots,d$, then we say that $\Ehr(\pol; t)$ is \defn{magic positive}.
The relevance of this notion comes from the following result.
Our terminology follows \cite[Theorem~4.19]{FerroniHigashitani2024ExamplesCounterexamplesEhrhartTheory}, but this was proven in \cite[Section~4]{Magic_Petter}.

\begin{quote}
For $\pol$ a lattice polytope, if $\Ehr(\pol ; t)$ is magic positive, then $\Ehr(\pol; t)$ has non-negative coefficients, and $h^\ast(\pol; t)$ is real-rooted.
\end{quote}

First,  using our triangulation of \Cref{ssec:UnimodularLayering}, it is straightforward to prove that the coefficients in the magic basis are affine functions of $k$.

\begin{lemma}\label{lem:MagicCoeffAreAffine}
For any $d, k\geq 0$, writing
$\Ehr(\Sylv; t) = \sum_{i=0}^d c_i(k)\cdot t^i(t+1)^{d-i}$, we have that each $c_i(k)$ is an affine function of $k$.
\end{lemma}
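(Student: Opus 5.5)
The plan is to reduce the statement to \Cref{thm:HaveAffineCoefficients}, which already shows that each coefficient of $\Ehr(\Sylv; t)$ in the monomial basis is an affine function of $k$. The magic basis $\{t^i(t+1)^{d-i} \mid 0\leq i\leq d\}$ is a basis of the space of polynomials of degree at most $d$. Indeed, substituting $u = t/(t+1)$ turns $\sum_i c_i t^i(t+1)^{d-i}$ into $(t+1)^d\sum_i c_i u^i$, which shows triangularity. So the change of basis from monomials to the magic basis is given by a fixed invertible matrix $A_d\in\mathbb{Q}^{(d+1)\times(d+1)}$, and $A_d$ depends only on $d$, not on $k$.

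Concretely, I would write $\Ehr(\Sylv; t) = \sum_{j=0}^d e_j(k)\,t^j$, where by \Cref{thm:HaveAffineCoefficients} each $e_j(k)=\alpha_j k+\beta_j$ is affine in $k$. Then $(c_0(k),\dots,c_d(k))^{T} = A_d^{-1}(e_0(k),\dots,e_d(k))^{T}$. Each $c_i(k)$ is therefore a fixed rational linear combination of affine functions of $k$, hence itself affine in $k$.

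An alternative route avoids the monomial basis altogether. The layering decomposition from the proof of \Cref{thm:HaveAffineCoefficients} gives
\[
\Ehr(\Sylv; t) = \Ehr(\Sylv[0]; t) + k\cdot \Ehr(\Sylv[0]\ssm\Base; t).
\]
Expanding both polynomials on the right in the magic basis and using linearity yields $c_i(k) = c_i^{(0)} + k\,c_i^{(1)}$. This is exactly the claim.

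There is no real obstacle here. The only points needing care are the invertibility of the magic basis and the small edge cases. For $d=1$ the theorem was stated for $d\geq 2$, but $\Sylv[k][1]=[0,k+1]$ has $\Ehr = (k+1)t+1 = (k+1)\,t + 1\cdot(t+1) - t$. Its magic coefficients are $c_0=1$ and $c_1=k$, which are visibly affine; the case $d=0$ is trivial.
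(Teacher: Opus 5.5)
Your proof is correct and is essentially the paper's argument: the paper also invokes \Cref{thm:HaveAffineCoefficients} and observes that the change of basis from monomials to $\{t^i(t+1)^{d-i}\}$ is linear and independent of $k$. Your explicit treatment of the cases $d\leq 1$, which \Cref{thm:HaveAffineCoefficients} (stated for $d\geq 2$) does not cover, is a small but welcome addition.
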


\begin{proof}
By \Cref{thm:HaveAffineCoefficients}, the Ehrhart polynomial $\Ehr(\Sylv; t)$ is an affine function of $k$.
Since the change of basis from $\set{t^j}{0\leq j\leq d}$ to $\set{t^i(t+1)^{d-i}}{0\leq i\leq d}$ is linear and does not depend on $k$, the coefficients $c_i(k)$ in the magic basis are also affine functions of $k$.
\end{proof}

\Cref{lem:MagicCoeffAreAffine} directly shows that to compute the  coefficients of $\Ehr(\Sylv,t)$ in the magic basis for a fixed dimension $d\geq 1$ and any $k$, it suffices to do the exact computations for $k\in\{0,1\}$. 
We recall and  prove the first part of \Cref{thmD}.

\begin{corollary}\label{coro:magic_positive}
The Ehrhart polynomial $\Ehr(\Sylv;t)$ is magic positive for all $k\geq 0$ and every $1\leq d\leq 6$.
Consequently, $\Sylv$ is Ehrhart positive and $h^\ast$-real-rooted for all $k\geq 0$ and $1 \leq d \leq 6$.
\end{corollary}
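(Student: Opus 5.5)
The plan is a finite computation, made possible by \Cref{lem:MagicCoeffAreAffine}. For each fixed $d\in\{1,\dots,6\}$, each magic coefficient has the form $c_i(k)=\alpha_{d,i}k+\beta_{d,i}$. An affine function of $k$ is non-negative for every $k\geq 0$ if and only if $\beta_{d,i}=c_i(0)\geq 0$ and $\alpha_{d,i}=c_i(1)-c_i(0)\geq 0$. So for each $d\le 6$ it suffices to compute $c_i(0)$ and $c_i(1)$ for all $0\le i\le d$ and check that both $c_i(0)$ and $c_i(1)-c_i(0)$ are non-negative.

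To get these numbers, I would first compute $h^\ast(\Sylv[0];t)$ and $h^\ast(\Sylv[1];t)$. For $d\le 4$ this can be done by direct lattice-point enumeration. For $d\in\{5,6\}$ I would use \Cref{thm:hStarSylvester}, which sums over only $s_d-1$ points. \Cref{prop:SymmetryhStar} gives $h^\ast(\Sylv[1];t)=h^\ast(\Sylv[0];t)+t^d h^\ast(\Sylv[0];1/t)$, so in fact only the $k=0$ computation is needed. From the $h^\ast$-vector I would recover the Ehrhart polynomial via
\[
\Ehr(\pol;t)=\sum_{j=0}^d h^\ast_j\binom{t+d-j}{d},
\]
and then change basis to $\{t^i(t+1)^{d-i}\}$ by solving a triangular linear system. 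Concretely, I would substitute $t=u/(1-u)$: then $t^i(t+1)^{d-i}=u^i(1-u)^{-d}$, so $c_i$ is the coefficient of $u^i$ in $(1-u)^d\,\Ehr(\pol;u/(1-u))$. Finally I would record these values in \Cref{tab:magic_coefficients} and verify the signs of the constant and linear parts entry by entry.

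The consequences then follow from the quoted result of \cite[Section~4]{Magic_Petter}: magic positivity implies non-negative Ehrhart coefficients and real-rootedness of $h^\ast(\Sylv;t)$. The cases $d=1,2$ can be checked by hand. For example, when $d=1$ we have $\Ehr=(k+1)t+1=1\cdot(t+1)+k\cdot t$, so $c_0=1$ and $c_1=k$.

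The main obstacle is purely computational: $s_6-1=3\,263\,442$ summands must be handled exactly in rational or integer arithmetic to obtain $h^\ast(\Sylv[0][6];t)$. This is feasible with \Cref{thm:hStarSylvester}. The remaining risk is that some coefficient $\alpha_{d,i}$ or $\beta_{d,i}$ turns out negative, in which case the claim would fail for that $d$. The tabulated data indicate that this does not happen for $d\le 6$.
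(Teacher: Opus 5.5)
Your proposal is correct and follows the paper's proof. Both use the affine dependence on $k$ (\Cref{lem:MagicCoeffAreAffine}) to reduce the claim to a sign check of the magic coefficients for $d\le 6$, computed from $h^\ast$ by enumeration for $d\le 4$ and by \Cref{thm:hStarSylvester} for $d\in\{5,6\}$ and recorded in \Cref{tab:magic_coefficients}, and then cite Br\"and\'en for Ehrhart positivity and $h^\ast$-real-rootedness. Your criterion ``$c_i(0)\geq 0$ and $c_i(1)-c_i(0)\geq 0$'' is also slightly more accurate than the paper's word ``positive'', since $c_d(k)=k$ vanishes at $k=0$.
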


\begin{proof}
The coefficients $c_i(k)$ are listed in \Cref{tab:magic_coefficients} for $1 \leq d\leq 6$.
They are positive for all $k \geq 0$.
The second part follows from \cite[Section~4]{Magic_Petter}.
\end{proof}

Going further, the symmetry that we proved for the coefficients of the $h^\ast$-polynomial in \Cref{cor:Symmetry_h_star} also holds for the coefficients in the magic basis.%

\begin{proposition}\label{prop:Symmetry_MagicEhrhart}
For $d, k\geq 0$, let $c_i(k) = p_{d,i} \cdot k + q_{d,i}$ be the coefficients of $\Ehr(\Sylv;t)$ in the magic basis.
Then $p_{d,i} = q_{d,d-i}$ for every $0 \leq i \leq d$.
\end{proposition}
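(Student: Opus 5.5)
We will reduce the statement to the Ehrhart-level identity underlying \Cref{prop:SymmetryhStar}. Write $E_k(t) \coloneqq \Ehr(\Sylv; t)$. By \Cref{lem:MagicCoeffAreAffine} and its proof, $E_k = E_0 + k\,(E_1 - E_0)$. Hence $q_{d,i}$ is the $i$-th magic coefficient of $E_0$, and $p_{d,i}$ is the $i$-th magic coefficient of $D(t) \coloneqq E_1(t) - E_0(t)$. So the claim amounts to the following: if $E_0(t) = \sum_i q_{d,i}\, t^i (t+1)^{d-i}$, then $D(t) = \sum_i q_{d,d-i}\, t^i (t+1)^{d-i}$.

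First, we identify $D$. As in the proof of \Cref{prop:SymmetryhStar}, additivity gives $D(m) = \Ehr(\Sylv[1] \ssm \Sylv[0]; m)$ for $m \geq 0$. By \Cref{lem:UnimodularEquivalenceLayering}, this equals $\Ehr(\Sylv[0] \ssm \polF_d^0; m)$. \Cref{lem:reciprocity}, applied with $\b u = (1, \dots, 1)$ exactly as in that proof, then yields
\[
D(m) = (-1)^d\, E_0(-m-1).
\]
Both sides are polynomials in $m$ agreeing on all integers $m \geq 0$, so $D(t) = (-1)^d E_0(-t-1)$ as polynomials. (Alternatively, one can derive this from the $h^\ast$-identity $h^\ast(\Sylv[1];t) - h^\ast(\Sylv[0];t) = t^d h^\ast(\Sylv[0];1/t)$ by summing the series, but the Ehrhart form is more direct.)

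Second, we compute how the substitution $t \mapsto -t-1$ acts on the magic basis:
\[
(-1)^d (-t-1)^i (-t)^{d-i} = (-1)^{d} (-1)^{i} (-1)^{d-i}\, (t+1)^i\, t^{d-i} = t^{d-i}(t+1)^{i}.
\]
The sign is $(-1)^{2d} = 1$, so the involution simply swaps the basis elements with indices $i$ and $d-i$. Therefore
\[
D(t) = \sum_i q_{d,i}\, t^{d-i}(t+1)^i = \sum_i q_{d,d-i}\, t^i (t+1)^{d-i},
\]
and comparing coefficients in the magic basis, which is a basis, gives $p_{d,i} = q_{d,d-i}$.

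The only delicate point is the first step: justifying the polynomial identity $D(t) = (-1)^d E_0(-t-1)$. This is the content of \Cref{lem:reciprocity} combined with the unimodular equivalence of the half-open layer, and both have already been established. For the degenerate case $d = 0$, or $d = 1$ where $\Sylv[k][1] = [0, k+1]$, one can check the identity directly.
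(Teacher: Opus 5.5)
Your proof is correct. It uses the same two ingredients as the paper's proof. The first is the reciprocity identity behind \Cref{prop:SymmetryhStar}, namely \Cref{lem:reciprocity} applied to $\Sylv[0]\ssm\polF_d^0$ with $\b u=(1,\dots,1)$. The second is the involution $\tau(f)(t)=(-1)^d f(-t-1)$, which swaps $t^i(t+1)^{d-i}$ and $t^{d-i}(t+1)^i$. You combine them more directly, however.

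The paper first passes to the $h^\ast$-level via \Cref{cor:Symmetry_h_star}. It then introduces the basis $B_j(t)=\binom{t+d-j}{d}$ and checks $\tau(B_j)=B_{d-j}$, which gives the symmetry $T_{i,j}=T_{d-i,d-j}$ of the transition matrix to the magic basis. Finally it transports the $h^\ast$-symmetry through that matrix. You instead observe that $\Ehr(\Sylv[1];t)-\Ehr(\Sylv[0];t)=\tau\bigl(\Ehr(\Sylv[0];t)\bigr)$ holds as polynomials. This makes the detour through the $B_j$ and the matrix $T_d$ unnecessary.

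It also sidesteps a small mismatch in the paper: \Cref{cor:Symmetry_h_star} is only stated for $i\in[d-1]$, while the paper's proof uses it for all $0\le j\le d$. The paper's route has one advantage: it isolates a general fact, the central symmetry of the transition matrix, which transfers any symmetry of this type from $h^\ast$-coefficients to magic coefficients.

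One correction: the case $d=0$ cannot be ``checked directly'', because it is false. There $\Ehr(\Sylv[k][0];t)=1$ for all $k$, so $c_0(k)=1$ and hence $p_{0,0}=0\neq 1=q_{0,0}$. Your identity $E_1-E_0=\tau(E_0)$ also fails there: the left side is $0$ and the right side is $1$. The statement should be restricted to $d\geq 1$. With that restriction, your direct check for $d=1$ and your general argument for $d\geq 2$ cover everything.
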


\begin{proof}
Let 
\[
B_j(t)=\binom{t+d-j}{d}\,, \text{ and } M_i(t)=t^i(t+1)^{d-i}\,.
\]
We use $\b T_d = (T_{i,j})_{0 \leq i, j\leq d}$ to denote the basis exchange matrix from $B_j(t)$ to $M_i(t)$, \ie $B_j(t)=\sum_{i=0}^d T_{i,j}M_i(t)$.
Consider the involution $\tau$ on polynomials of degree at most $d$ defined by $\tau(f)(t) = (-1)^d \cdot f(-t-1)$.
Then 
\begin{align*}
\tau(B_j)(t)&=(-1)^d\binom{-(j+t+1-d)}{d}=\binom{t+j}{d}=B_{d-j}(t) \,,\\
\tau(M_i)(t)&=t^{d-i}(t+1)^i=M_{d-i}(t) \,.
\end{align*}
Applying $\tau$ to $B_j(t)=\sum_{i=0}^d T_{i,j}M_i(t)$ gives $B_{d-j}(t)=\sum_{i=0}^d T_{i,j}M_{d-i}(t)$.
Comparing this with the expansion of $B_{d-j}(t)$ in the magic basis yields $T_{i,j}=T_{d-i,d-j}$.
By \Cref{cor:Symmetry_h_star}, we have $h_j^\ast(\Sylv) = b_{d,j} + b_{d,d-j}\cdot k$ for every $0\leq j \leq d$.
This implies $$c_i(k) = k\cdot\sum_{j=0}^d T_{i,j}b_{d,d-j} ~+~ \sum_{j=0}^d T_{i,j}b_{d,j} \,.$$

So $p_{d,i}=\sum_{j=0}^d T_{i,j}b_{d,d-j}$ and $q_{d,i}=\sum_{j=0}^d T_{i,j}b_{d,j}$.
Using $T_{i,d-\ell}=T_{d-i,\ell}$, the claim follows.
\end{proof}

For a fixed $d\geq 1$, as the magic coefficients are affine in $k$ (\Cref{lem:MagicCoeffAreAffine}), and exhibit an affine symmetry (\Cref{prop:Symmetry_MagicEhrhart}), it is enough to determine only $d$ coefficients in order to conclude whether $\Sylv$ is magic positive or not for all $k\geq 0$.

\begin{corollary}\label{cor:MagicPositivityIsImpliesByTruthAtK0}
For any $d\geq 1$, if $\Ehr(\Sylv[0]; t)$ is magic positive, then $\Ehr(\Sylv; t)$ is magic positive for all $k\geq 0$.

Moreover, for any $d\geq 1$, if $\Ehr(\Sylv[0]; t)$ is not magic positive, then there exists $K>0$ such that $\Ehr(\Sylv; t)$ is not magic positive for all $k\geq K$.
\end{corollary}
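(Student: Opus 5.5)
We combine \Cref{lem:MagicCoeffAreAffine} with \Cref{prop:Symmetry_MagicEhrhart}. Write $c_i(k) = p_{d,i}\cdot k + q_{d,i}$ for the magic coefficients of $\Ehr(\Sylv;t)$. At $k=0$ we have $c_i(0) = q_{d,i}$, so the coefficients of $\Ehr(\Sylv[0];t)$ in the magic basis are exactly $(q_{d,0},\dots,q_{d,d})$. By \Cref{prop:Symmetry_MagicEhrhart}, $p_{d,i} = q_{d,d-i}$, and therefore
\[
c_i(k) = q_{d,d-i}\cdot k + q_{d,i}.
\]
Everything is thus controlled by the magic coefficients of $\Ehr(\Sylv[0];t)$ alone.

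For the first claim, assume $\Ehr(\Sylv[0];t)$ is magic positive, so $q_{d,j}\geq 0$ for all $0\le j\le d$. Then for every $k\geq 0$, $c_i(k)$ is a sum of two non-negative terms, $q_{d,d-i}k \ge 0$ and $q_{d,i}\ge 0$. Hence $c_i(k)\ge 0$ for all $i$, and $\Ehr(\Sylv;t)$ is magic positive.

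For the second claim, assume some $q_{d,j}<0$. The natural approach is to look at the coefficient $c_{d-j}(k) = q_{d,j}\cdot k + q_{d,d-j}$, whose slope $q_{d,j}$ is negative. Being affine in $k$ with negative slope, it is negative for all $k > K_0 := q_{d,d-j}/|q_{d,j}|$. Taking $K = \max(1, \lfloor K_0\rfloor + 1)$ gives $c_{d-j}(k)<0$ for all $k\ge K$, so $\Ehr(\Sylv;t)$ is not magic positive for all such $k$.

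The only point requiring care is the index bookkeeping: the negative $q_{d,j}$ appears as the slope of the coefficient with index $d-j$, not $j$. The statement of \Cref{prop:Symmetry_MagicEhrhart} is for $d,k\ge 0$, which covers $k=0$ as needed. Nothing else is an obstacle; the argument is a direct consequence of affineness together with the symmetry.
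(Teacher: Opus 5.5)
Your proposal is correct and matches the paper's proof essentially line by line. Both use \Cref{lem:MagicCoeffAreAffine} and \Cref{prop:Symmetry_MagicEhrhart} to write $c_i(k)=q_{d,d-i}\,k+q_{d,i}$, and for the converse both look at $c_{d-j}(k)$, whose slope $q_{d,j}$ is negative, so it is negative once $k>-q_{d,d-j}/q_{d,j}$; your choice $K=\max(1,\lfloor K_0\rfloor+1)$ just makes the requirement $K>0$ explicit, which the paper leaves implicit.
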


\begin{proof}
The first statement is a direct consequence of \Cref{lem:MagicCoeffAreAffine,prop:Symmetry_MagicEhrhart}. 

Conversely, assume $c_i(0) = q_{d, i} < 0$ for some $i\in [d]$. %
Using \Cref{prop:Symmetry_MagicEhrhart} we conclude
\[
c_{d-i}(k) = q_{d,d-i} + k\cdot p_{d,d-i}
= q_{d,d-i} + k\cdot q_{d,i}.
\]
The expression on the right-hand-side is strictly negative for all $k > \frac{-q_{d,d-i}}{q_{d,i}}$.
\end{proof}

Using an improvement of our implementation for the computation of $h^\ast(\Sylv; t)$ suggested by ChatGPT using GPT-5.6 Sol, we ran a computational experiment yielding the values for the $7$-dimensional case $\Ehr(\Sylv[k][7]; t)$, see \Cref{ssec:TablesD7}.
In particular, we obtain that the linear coefficient of $\Ehr(\Sylv[k][7]; t)$ is negative, namely it is:
$$- \Bigl(\dfrac{6216272723965441907549}{30}\,k  ~+~ \dfrac{12432545447927574463817}{60}\Bigr)$$

This yields the second part of \Cref{thmD}.

\begin{proposition}\label{prop:NotMagicPositive}
For $k\geq 0$, the simplex $\Sylv[k][7]$ is neither Ehrhart positive nor magic positive.
\end{proposition}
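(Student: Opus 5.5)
The plan is to show that the linear coefficient of $\Ehr(\Sylv[k][7]; t)$ is negative for every $k \geq 0$, and then to deduce the failure of magic positivity from the implication quoted above (magic positivity implies Ehrhart positivity).

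\textbf{Reduction to $k \in \{0,1\}$.} By \Cref{thm:HaveAffineCoefficients}, each coefficient of $\Ehr(\Sylv[k][7]; t)$ is an affine function of $k$. So the linear coefficient $[t^1]\Ehr(\Sylv[k][7];t)$ equals $\alpha k + \beta$, where
\[
\beta = [t^1]\Ehr(\Sylv[0][7];t), \qquad \alpha = [t^1]\Ehr(\Sylv[1][7];t) - \beta .
\]
It therefore suffices to compute $h^\ast(\Sylv[0][7];t)$ and $h^\ast(\Sylv[1][7];t)$ exactly. \Cref{cor:Symmetry_h_star} lets us shortcut the second computation: one has
\[
h^\ast(\Sylv[1][7];t) = h^\ast(\Sylv[0][7];t) + t^7 h^\ast(\Sylv[0][7];\tfrac1t).
\]
Hence only $h^\ast(\Sylv[0][7];t)$ needs to be computed.

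\textbf{The computation.} We compute $h^\ast(\Sylv[0][7];t)$ with \Cref{thm:hStarSylvester}. This means summing $\varphi_0(\b x;t)$ over the $s_7-1 = 1807\cdot 3263443 \cdot 43\cdot 42$ points $\b x$ in the box $0\le x_j<s_j$, $j\le 6$. Each summand depends only on $N(\b x)$, and hence only on $q(\b x)$ and $r(\b x)$.

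This enumeration is the main obstacle: it is about $10^{13}$ points, far too many for a naive loop. To handle it, we first enumerate the distribution of $N'=\sum_{j\le 5} x_j\frac{s_7-1}{s_j}$ over the small box, which has only $2\cdot3\cdot7\cdot43\cdot1807$ points. We then treat the large last coordinate $x_6\in[0,s_6)$ analytically. For fixed $N'$, as $x_6$ varies, $N$ runs through an arithmetic progression with step $\frac{s_7-1}{s_6}=s_6-1$. One can then count, in closed form, how many terms fall in each window $[q(s_7-1),(q+1)(s_7-1))$, and sum the corresponding $r$ values over each window. The result is $h^\ast(\Sylv[0][7];t)$, which we record in \Cref{ssec:TablesD7}.

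\textbf{Conclusion.} From $h^\ast$ we get the Ehrhart polynomial via $\Ehr(\pol;m)=\sum_i h^\ast_i\binom{m+7-i}{7}$, and extract the coefficient of $m^1$ for $k=0$ and $k=1$. This gives
\[
\alpha = -\tfrac{6216272723965441907549}{30}, \qquad \beta = -\tfrac{12432545447927574463817}{60}.
\]
Both are negative, so $\alpha k+\beta<0$ for all $k\ge0$, and $\Sylv[k][7]$ is not Ehrhart positive. If $\Ehr(\Sylv[k][7];t)$ were magic positive, the quoted result of \cite{Magic_Petter} would force nonnegative Ehrhart coefficients, a contradiction.

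One can also argue directly. The coefficient of $t$ in $\sum_i c_i t^i(t+1)^{7-i}$ is $7c_0+c_1$. Here $c_0=\Ehr(\Sylv[k][7];0)=1$ (evaluate at $t=0$), so negativity of the linear coefficient forces $c_1<0$.
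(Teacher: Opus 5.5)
Your proposal is correct and follows the paper's route. The paper also derives the claim directly from the negativity, for every $k\geq 0$, of the computed linear coefficient $-\bigl(\tfrac{6216272723965441907549}{30}k+\tfrac{12432545447927574463817}{60}\bigr)$ of $\Ehr(\Sylv[k][7];t)$, which it obtains by computer from \Cref{thm:hStarSylvester} and the affine dependence on $k$. Your additions are sound refinements of that same argument: the shortcut for $k=1$ (which is really \Cref{prop:SymmetryhStar} rather than the corollary), the analytic handling of $x_6$, and the identity $c_1=[t^1]\Ehr(\Sylv[k][7];t)-7$, which agrees with the magic-basis values in \Cref{ssec:TablesD7}.
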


\begin{proof}
This is a direct consequence of the negativity of the linear coefficient of $\Ehr(\Sylv[k][7]; t)$. 
\end{proof}

Using ChatGPT with GPT-5.6 Sol, we computed the Ehrhart polynomials of $\Sylv[0][d]$ up to $d=16$. Based on the results, we conjecture that $\Ehr(\Sylv;t)$ has a negative coefficient in front of $t^i$ if and only if $1\leq i\leq d-6$ and,  $d-i\equiv 2 \mod 4$ or $d-i\equiv 3 \mod 4$.
In particular, to motivate further research, we state
\begin{conjecture}\label{conj:MagicPositive}
For $d\geq 7$ and any $k\geq 0$, the Ehrhart polynomial $\Ehr(\Sylv; t)$ is not Ehrhart positive. %
\end{conjecture}

Note that $\Sylv[k][7]$ is $h^\ast$-unimodal (as proven in \Cref{thm:hStarUnimodal}) and that $\Sylv[0][7]$ is $h^\ast$-real-rooted (direct computation from \Cref{ssec:TablesD7}).
This encourages us to conjecture the following:

\begin{conjecture}\label{conj:hStarRealRooted}
The polynomial $h^\ast(\Sylv; t)$ is real-rooted for every $k\geq 0$ and $d \geq 1$.
\end{conjecture}

Using ChatGPT with GPT-5.6 Sol, we verified this conjecture for $d\leq 16$.
We have tried to tackle these conjectures using the partial recursive formulas that we derived for the polynomial $f^\ast(\Sylv; t)$ in \Cref{thm:f_star_induction} and the values of $\Ehr(\Sylv[k][7]; t)$ and $h^\ast(\Sylv[k][7]; t)$, but could not find a complete proof.
\appendix
\addtocontents{toc}{\protect\setcounter{tocdepth}{1}}

\section{Theoretic results on boundary and local $h^\ast$-polynomials}\label{Appen:Local_boundary}

\subsection{Local $h^\ast$-polynomial}
Let $\polS$ be a simplex with vertices $\b v_0, \dots, \b v_d$.
Analogous to the half open parallelepiped of \Cref{eq:half_open_para}, the \defn{open parallelepiped} associated to $\polS$ is defined as 
$$\mathdefn{\overline{\Pi}_{\polS}} \coloneqq \set{\sum_{j=0}^d \lambda_j \begin{pmatrix} \b v_j \\ 1\end{pmatrix}}{0< \lambda_j < 1 \text{ for } 0\leq j\leq d} \,.$$ 
The \mathdefn{local $h^\ast$-polynomial} \cite{Local_h_definition} (or \emph{box polynomial} \cite{box_polynomial_definition}) 
is defined by $\mathdefn{\ell^\ast(\polS; t)} \coloneqq \sum_{(\b x, m)\in \overline{\Pi}_{\polS}\cap \Z^{d+1}} t^m$, so that the coefficient of $t^i$ counts the number of lattice points in $\overline{\Pi}_{\polS}$ whose last coordinate is equal to $i$.

On the other hand, the union of all facets of $\pol$ forms the \defn{boundary} of $\pol$, denoted by $\mathdefn{\partial \pol}$, while the \defn{interior} of $\pol$ is $\mathdefn{\pol^{\circ}} \coloneqq \pol\ssm \partial \pol$.

\begin{definition}
We define the \mathdefn{boundary $h^\ast$-polynomial} and the \mathdefn{interior $h^\ast$-polynomial} of $\pol$, denoted by $h^\ast(\partial \pol; t)$ and $h^\ast(\pol^\circ; t)$, respectively, via
\[
	\sum_{m\geq 0} \Ehr(\partial \pol;m)\cdot t^m =\frac{h^\ast(\partial \pol; t)}{(1-t)^d} \quad \text{and}\quad \sum_{m\geq 1} \Ehr(\pol^\circ;m)\cdot t^m =\frac{h^\ast(\pol^\circ; t)}{(1-t)^{d+1}},
\]
where
\[
	\Ehr(\partial \pol; m) \coloneqq \bigl|\partial(m\pol) \cap \mathbb{Z}^d \bigr| \qquad \text{and} \qquad \Ehr(\pol^\circ; m) \coloneqq \bigl|(m\pol)^\circ \cap \mathbb{Z}^d \bigr|.
\]
\end{definition}

It is well known that $h^\ast(\partial \pol; t)$ is palindromic, that is, $h^\ast(\partial \pol; t) = t^d\cdot h^\ast(\partial \pol; \frac{1}{t})$.
For a comprehensive study of the properties of $h^\ast(\partial \pol; t)$, we refer the reader to \cite{BajoBeck2023-BoundaryhStar, boundaryhunimodulartriangulations}.

\medskip

Let $\polD$ be a simplex with vertices $\b 0, a_1\b e_1 \dots, a_d\b e_d$.
Recall that
$$\mathdefn{\overline{\Pi}_{\polytope{D}}} \coloneqq \set{ \lambda_0 \begin{pmatrix} \b 0 \\ 1\end{pmatrix} + \sum_{j=1}^d \lambda_j \begin{pmatrix} a_j \b e_j \\ 1\end{pmatrix}}{0 < \lambda_j < 1}$$
is the open parallelepiped associated to the diagonal $d$-simplex $\polytope{D} = \conv(\b 0, a_1\b e_1, \dots, a_d\b e_d)$.
Then its local $h^\ast$-polynomial is given by:
$$\ell^\ast(\polD; t) \coloneqq \sum_{(\b x, m)\in \overline{\Pi}_{\polytope{D}}\cap \Z^{d+1}} t^m \,.$$

Using an argument analogous to \Cref{prop:hStarDiagonalSimplices} (with $m = \lambda_0 + \rho(\b x)$ and $0 < \lambda_0 < 1$), one can prove the following equation:
\begin{equation}\label{eqn:local_h_star}
\ell^\ast(\polD; t) = \sum_{\substack{\b x\in\Z^{d}\\0 < x_j < a_j \text{ and } \rho(\b x) \notin \Z}} t^{\left\lceil  \rho(\b x)\right \rceil},  \qquad \text{ where } \mathdefn{\rho(\b x)} \coloneqq \sum_{j = 1}^d \frac{x_j}{a_j}\,.
\end{equation}
The details are omitted and left to the reader.
We prove the analogous version of \Cref{thm:hStarSylvester} for the local $h^\ast$-polynomial. 
This is the formula we implemented to obtain the $5$-dimensional and $6$-dimensional results in \Cref{tab:local_h_coefficients}.

\begin{proposition}\label{prop:local_h_star_symmetry}
For $k\geq 0$, we have
$$\ell^\ast (\Sylv ; t) = \sum_{\substack{\b x \in \Z^{d-1}\\ 0 < x_j <s_j }} \bigl((k+1)(s_d-1 - r(\b x)) - 1\bigr) t^{q(\b x) + 1} + \bigl((k+1)r(\b x) - 1\bigr) t^{q(\b x) + 2}.$$
\end{proposition}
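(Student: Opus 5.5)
Apply \Cref{eqn:local_h_star} to $\polD=\Sylv$, with $a_j=s_j$ for $j<d$ and $a_d=(k+1)M$, where $M\coloneqq s_d-1$, and then sum over $x_d$ exactly as in the proof of \Cref{thm:hStarSylvester}. Fix $\b x\in\Z^{d-1}$ with $0<x_j<s_j$ for all $j\in[d-1]$. Then
\[
\rho(\b x,x_d)=q(\b x)+\frac{(k+1)r(\b x)+x_d}{(k+1)M},
\]
and $x_d$ ranges over $1\leq x_d\leq (k+1)M-1$. We exclude those $x_d$ for which $\rho$ is an integer, and record the exponent $\lceil\rho\rceil$ for the others.

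\textsf{Case $r(\b x)=0$.} Here $\rho\notin\Z$ for every admissible $x_d$, since $0<x_d<(k+1)M$. The exponent is always $q(\b x)+1$, so the contribution is $\bigl((k+1)M-1\bigr)t^{q(\b x)+1}$. This agrees with the claimed summand at $r=0$, where the $t^{q+2}$ coefficient $(k+1)\cdot 0-1=-1$ would be a problem. So the first task is to show that $r(\b x)=0$ never occurs when all $x_j\geq1$. The divisibility argument from \Cref{thm:NumberOfLatticePoints} does this: $M\mid N(\b x)$ forces $s_j\mid x_j$ for every $j$, because $\gcd(M/s_j,s_j)=1$. With $0<x_j<s_j$ this is impossible. (That argument only used $0\le x_j<s_j$, so it applies here.) Hence Case 1 is empty.

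\textsf{Case $1\le r(\b x)\le M-1$.} The quantity $(k+1)r+x_d$ ranges over the open interval $\bigl((k+1)r,\,(k+1)(M+r)\bigr)$, and $\rho$ is an integer exactly when $x_d=(k+1)(M-r)$. For $1\le x_d<(k+1)(M-r)$ the exponent is $q+1$, which gives $(k+1)(M-r)-1$ terms. For $(k+1)(M-r)<x_d\le (k+1)M-1$ the exponent is $q+2$, which gives $(k+1)r-1$ terms. Summing over $\b x$ gives the stated formula.

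The main point needing care is therefore the vanishing of Case 1, that is, the coprimality and divisibility fact, together with correct bookkeeping of the single excluded value of $x_d$. The remaining steps are routine counting.
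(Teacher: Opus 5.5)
Your proof is correct and is essentially the paper's argument. In both, you write the exponent as $q(\b x)+\bigl\lceil\frac{(k+1)r(\b x)+x_d}{(k+1)M}\bigr\rceil$, rule out $r(\b x)=0$ when all $x_j\geq 1$, and count the values of $x_d$ on either side of the single excluded value $x_d=(k+1)(M-r(\b x))$. The only difference is in how $r(\b x)=0$ is excluded: the paper uses the congruence $r(\b x)\equiv -x_j \pmod{s_j}$ (from $M/s_j\equiv -1 \pmod{s_j}$), while you use $\gcd(M/s_j,s_j)=1$; both versions tacitly need $d\geq 2$ so that some index $j$ exists.
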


\begin{proof}
Let $M = s_d - 1$.
By \Cref{eqn:local_h_star}, we have
$$\ell^\ast(\Sylv; t) = \sum_{\substack{\b x\in\Z^{d}\\0<x_j<s_j}} t^{\left\lceil \sum\limits_{j = 1}^{d - 1} \tfrac{x_j}{s_j} + \tfrac{x_d}{(k+1)M} \right\rceil} \,,$$
where we sum over all $\b x$ satisfying $\sum\limits_{j = 1}^{d - 1} \tfrac{x_j}{s_j} + \tfrac{x_d}{(k+1)M}\notin \Z$.
If $\b x\in\Z^{d-1}$ with $0 < x_j < s_j$ for $j\in [d-1]$, satisfies the latter condition, then the  proof of \Cref{thm:hStarSylvester} shows that  $\left \lceil \frac{1}{M}N(\b x) + \frac{x_d}{(k+1)M} \right \rceil = q(\b x) + \left\lceil\frac{(k+1)r(\b x) + x_d}{(k+1)M}  \right \rceil$.

Moreover, since $\frac{M}{s_j} = (s_j - 1)\cdot s_{j+1}\cdots s_{d-1} \equiv s_j - 1 \equiv -1 \pmod {s_j}$, we have 
$$r(\b x)\equiv N(\b x) \equiv x_j\dfrac{M}{s_j}\equiv -x_j \pmod{s_j}.$$
As $x_j \ne 0$, it follows that  $r(\b x) \ne 0$ and hence, $1\leq r(\b x) < M$.
Since $1\leq x_d <(k+1)M$, we get that $0<\dfrac{(k+1)r(\b x) + x_d}{(k+1)M}<2$.
Note that if $(k+1)r(\b x) + x_d = (k+1)M$, then $\frac{1}{M}N(\b x) + \frac{x_d}{(k+1)M}\in\Z$, contradicting the hypothesis on $\b x$.
Thus
$$\left\lceil\frac{(k+1)r(\b x) + x_d}{(k+1)M} \right \rceil =\begin{cases}
1, & \text{if } 0<x_d <(k+1)(M-r(\b x)) \,, \\
2, & \text{if } (k+1)(M-r(\b x))<x_d <(k+1)M \,.
\end{cases}$$
This implies the claimed result.
\end{proof}

\subsection{Boundary $h^\ast$-polynomial}\label{subsec:boundary}
It is well-known (see \eg \cite{BajoBeck2023-BoundaryhStar}) that the boundary $h^\ast$-polynomial is closely related to the symmetric decomposition of the ordinary $h^\ast$-polynomial. More precisely, it can be computed as follows:

\begin{equation}\label{eq:boundary_h_star}
h^\ast(\partial \Sylv; t ) = \frac{h^\ast(\Sylv ; t) - h^\ast(\Sylv[k][d]; \frac{1}{t})t^{d+1}}{1-t}
\end{equation}

Based on \Cref{thm:hStarSylvester,prop:local_h_star_symmetry}, we now provide an explicit formula for $h^\ast(\partial \Sylv; t)$:

\begin{proposition}\label{prop:boundaryhStarSylvester}
For all $d\geq 2$ and $k\geq 0$:
$$h^\ast(\partial \Sylv; t) = \sum_{\substack{\b x\in \Z^{d-1} \\ 0\leq x_j < s_j}} \psi_k(\b x;t) ~,$$
where
$$ 
\mathdefn{\psi_k(\b x;t)} \coloneqq
\begin{cases}
t^{q(\b x)}+\bigl((k+1)(s_d-1)-k\bigr)t^{q(\b x)+1}, & \text{if } r(\b x)=0, \\
\bigl((k+1)(s_d-1 - r(\b x))+1\bigr)t^{q(\b x)+1} + \bigl((k+1)r(\b x)-k\bigr)t^{q(\b x)+2}, & \text{if } 1\leq r(\b x)<s_d - 1.
\end{cases}
$$
\end{proposition}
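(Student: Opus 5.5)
The plan is to derive the formula term by term from \Cref{thm:hStarSylvester}, using identity \eqref{eq:boundary_h_star} together with an involution on the index set $\set{\b x\in\Z^{d-1}}{0\leq x_j<s_j}$. Write $M\coloneqq s_d-1$. By \eqref{eq:boundary_h_star} and \Cref{thm:hStarSylvester} we have
\[
(1-t)\,h^\ast(\partial\Sylv;t)=\sum_{\b x}\varphi_k(\b x;t)-t^{d+1}\sum_{\b x}\varphi_k(\b x;\tfrac1t).
\]
Identity \eqref{eq:boundary_h_star} itself can be justified quickly. We have $\Ehr(\partial\pol;m)=\Ehr(\pol;m)-\Ehr(\pol^\circ;m)$, and Ehrhart--Macdonald reciprocity gives $\sum_{m\ge1}\Ehr(\pol^\circ;m)t^m=t^{d+1}h^\ast(\pol;\frac1t)/(1-t)^{d+1}$; multiplying by $(1-t)^d$ yields the identity.

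The key step is to reindex the second sum by the involution $\bar{\b x}$ with $\bar x_j\coloneqq s_j-1-x_j$, and then to prove the termwise identity
\[
\varphi_k(\b x;t)-t^{d+1}\varphi_k(\bar{\b x};\tfrac1t)=(1-t)\,\psi_k(\b x;t).
\]
To this end I first compute $N(\bar{\b x})$. Since $\sum_j (s_j-1)\frac{M}{s_j}=(d-1)M-M\sum_{j<d}\frac1{s_j}=(d-1)M-(M-1)$, by the Egyptian identity $\sum_{j<d}\frac1{s_j}=1-\frac1M$, we get
\[
N(\bar{\b x})=(d-2)M+1-N(\b x).
\]
This gives three cases for the division by $M$:
\begin{itemize}
\item if $r(\b x)=0$, then $r(\bar{\b x})=1$ and $q(\bar{\b x})=d-2-q(\b x)$;
\item if $r(\b x)=1$, then $r(\bar{\b x})=0$ and $q(\bar{\b x})=d-1-q(\b x)$;
\item if $2\le r(\b x)\le M-1$, then $r(\bar{\b x})=M+1-r(\b x)$ and $q(\bar{\b x})=d-2-q(\b x)$.
\end{itemize}

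In each case one substitutes into the definition of $\varphi_k$ and divides by $1-t$. For instance, when $r=0$ (writing $q=q(\b x)$),
\[
t^{d+1}\varphi_k(\bar{\b x};\tfrac1t)=k\,t^{q+1}+\bigl((k+1)(M-1)+1\bigr)t^{q+2},
\]
and the difference is $t^q+\bigl((k+1)M-1-k\bigr)t^{q+1}-\bigl((k+1)M-k\bigr)t^{q+2}$. This equals $(1-t)\bigl(t^q+((k+1)M-k)t^{q+1}\bigr)=(1-t)\psi_k(\b x;t)$. The cases $r=1$ and $r\ge2$ are analogous three-term telescoping checks. In each of them $t^{d+1}\varphi_k(\bar{\b x};\frac1t)$ lands in degrees $q+1$ and $q+2$ (or $q+2$ and $q+3$ when $r=1$, which should be double-checked), and one verifies that the coefficients of the difference sum to zero and factor as claimed. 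Summing the termwise identity over all $\b x$ then gives the proposition.

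I expect the main obstacle to be the bookkeeping in the case analysis, especially the boundary cases $r(\b x)\in\{0,1\}$, where the involution switches between the two branches of $\varphi_k$ and shifts $q$ by $d-1-q$ rather than $d-2-q$. If the termwise identity fails for $r=1$, the fallback is to pair the $r=0$ and $r=1$ terms together, since the involution swaps these two classes bijectively, and to verify only the sum of the two paired identities.
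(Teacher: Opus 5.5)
Your involution strategy can work, but the division-by-$M$ bookkeeping is wrong in two of your three cases. With the quotients you state, the ``analogous'' checks fail, so as written the proposal does not prove the statement.

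Write $N(\b x)=qM+r$. Your own formula then gives $N(\bar{\b x})=(d-2-q)M+(1-r)$.

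For $r=1$ this equals $(d-2-q)M$, so $q(\bar{\b x})=d-2-q$, not $d-1-q$. Your value also contradicts $\bar{\bar{\b x}}=\b x$ together with your (correct) $r=0$ case, since the two would give $q(\bar{\bar{\b x}})=q+1$.

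For $2\le r\le M-1$ the term $1-r$ is negative, so you must borrow: $N(\bar{\b x})=(d-3-q)M+(M+1-r)$. Hence $q(\bar{\b x})=d-3-q$, not $d-2-q$.

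If you substitute the quotients you wrote, then in both cases $t^{d+1}\varphi_k(\bar{\b x};\tfrac1t)$ sits in degrees $q+1,q+2$. The difference $\varphi_k(\b x;t)-t^{d+1}\varphi_k(\bar{\b x};\tfrac1t)$ then comes out as $(1-k)(1-t)\,t^{q+1}$, which is not $(1-t)\psi_k(\b x;t)$. Your fallback of pairing the classes $r=0$ and $r=1$ would not help, because the defect is in the quotients, not in the pairing. Your parenthetical guess of degrees $q+2,q+3$ for $r=1$ is in fact the right one, but it is not what your stated $q(\bar{\b x})$ produces.

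With the corrected quotients the termwise identity does hold, and your $r=0$ computation is already correct. The case $r=1$ is covered by the general formula below. This is because the second branch of $\varphi_k$, evaluated at remainder $M$ and quotient $q-1$, coincides with the first branch at remainder $0$ and quotient $q$. So for every $1\le r\le M-1$ the reflected term lies in degrees $q+2,q+3$, and
\begin{align*}
\varphi_k(\b x;t)-t^{d+1}\varphi_k(\bar{\b x};\tfrac1t)
&=\bigl((k+1)(M-r)+1\bigr)t^{q+1}+(k+1)(2r-M-1)\,t^{q+2}-\bigl((k+1)(r-1)+1\bigr)t^{q+3}\\
&=(1-t)\,\psi_k(\b x;t).
\end{align*}

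Once repaired, your argument is a genuinely different route from the paper's.

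The paper never reindexes. It writes $h^\ast(\Sylv;t)=k\,A_d(t)+B_d(t)$ and uses the global symmetry $t^dB_d(\tfrac1t)=A_d(t)$ from \Cref{cor:Symmetry_h_star}. That symmetry rests on \Cref{lem:UnimodularEquivalenceLayering} and the half-open reciprocity \Cref{lem:reciprocity}. It turns the boundary identity into $A_d+B_d+\frac{k-1}{1-t}(A_d-tB_d)$. The paper then checks divisibility by $1-t$ separately for each $\b x$, using only the $k$-linear and constant parts of $\varphi_k(\b x;t)$.

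Your involution $\b x\mapsto\bar{\b x}$ needs only the Egyptian identity $\sum_{j<d}\frac1{s_j}=1-\frac1M$. It therefore bypasses \Cref{cor:Symmetry_h_star} entirely, in effect carrying out the reciprocity directly on the fundamental parallelepiped. Both routes produce the same per-$\b x$ contribution $\psi_k(\b x;t)$.
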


\begin{proof}
Let $h^\ast(\Sylv;t) = \sum_{i=0}^d (a_{d,i}\cdot k + b_{d,i})t^i = A_d(t)\cdot k + B_d(t)$.
By \Cref{cor:Symmetry_h_star}, we have $h^\ast_i(\Sylv) = b_{d,d-i}\cdot k + b_{d,i}$.
Therefore, $t^dB_{d}\bigl(\frac{1}{t}\bigr) = A_{d}(t)$, and $t^dA_{d}\bigl(\frac{1}{t}\bigr) = B_d(t)$.
 \Cref{eq:boundary_h_star} implies
\begin{align*}
h^\ast(\partial \Sylv; t )
&= \frac{A_d(t)\cdot k + B_d(t) - \bigl(t^{d+1}A_d(\tfrac1t)\cdot k + t^{d+1}B(\tfrac1t)\bigr)}{1-t} \\
&= \dfrac{B_d(t) + A_d(t)\cdot k - (A_d(t) + B_d(t) k) t}{1-t} = A_d(t) + B_d(t) + \dfrac{k-1}{1-t}\bigl(A_d(t) - tB_d(t)\bigr) \,.
\end{align*}

Furthermore, by \Cref{thm:hStarSylvester}, we have
$$h^\ast(\Sylv; t) = \sum_{\substack{\b x\in \Z^{d-1} \\ 0\leq x_j < s_j}} \varphi_k(\b x; t) = \sum_{\substack{\b x\in \Z^{d-1} \\ 0\leq x_j < s_j}} \bigl(\beta(\b x;t) + \alpha(\b x;t) \cdot k\bigr) = B_d(t) + A_d(t)\cdot k\,.$$
To ease notation, let $M \coloneqq s_d-1$.
Fix $\b x\in \Z^{d-1}$ with $0\leq x_j < s_j$ for all $j\in [d-1]$.
First, consider the case $r(\b x) = 0$.
By \Cref{thm:hStarSylvester}, the contribution corresponding to $\b x$ is $\varphi_k(\b x; t) = t^{q(\b x)}+\bigl((k+1)\cdot M-1\bigr)t^{q(\b x)+1}.$
Therefore, $\beta(\b x;t) = t^{q(\b x)} + (M-1)t^{q(\b x) + 1}$ and $\alpha(\b x;t) =Mt^{q(\b x)+1}$.
Moreover, $\beta(\b x;t) + \alpha(\b x; t) = t^{q(\b x)} + (2M-1)t^{q(\b x ) + 1}$ and $\alpha(\b x; t) - t \beta(\b x; t) =(1-t)\cdot(M-1) t^{q(\b x) + 1}$.
Thus, the contribution of $\b x$ to $h^\ast(\partial \Sylv; t )$ is 
$$\alpha(\b x; t) + \beta(\b x;t) + \dfrac{k-1}{1-t}\bigl(\alpha(\b x; t) - t\beta(\b x;t)\bigr) = t^{q(\b x)} + \bigl((k+1)M-k\bigr)t^{q(\b x)+1}\,.$$

Now consider the case $1\leq r(\b x)<M$.
By \Cref{thm:hStarSylvester}, the contribution corresponding to $\b x$ is
$\varphi_k(\b x; t) =\bigl((k+1)(M-r(\b x))+1\bigr)t^{q(\b x)+1} + \bigl((k+1)r(\b x)-1\bigr)t^{q(\b x)+2}$.
Therefore, $\beta(\b x;t) = \bigl(M-r(\b x)+1\bigr)t^{q(\b x)+1} + \bigl(r(\b x)-1\bigr)t^{q(\b x)+2}$ and $\alpha(\b x;t) = \bigl(M-r(\b x)\bigr)t^{q(\b x)+1} + r(\b x)t^{q(\b x)+2}$.
Hence, the contribution of $\b x$ to $h^\ast(\partial \Sylv;t)$ is 
$$\alpha(\b x; t) + \beta(\b x;t) + \dfrac{k-1}{1-t}\bigl(\alpha(\b x; t) - t\beta(\b x;t)\bigr) = \bigl((k+1)(M-r(\b x))+1  \bigr)t^{q(\b x)+1} + \bigl( (k+1)r(\b x) - k \bigr)t^{q(\b x) + 2} \,.$$
As these are the only two cases for $r(\b x)$, summing the contributions yields the desired formula.
\end{proof}

\section{Ehrhart-theoretic quantities for dimensions 7 and lower}\label{app:Tables}

One can consult the code to generate the numbers in the following tables in the NoteBook in SageMath available at
\url{https://github.com/EmbrunForestier/SylvesterSimplices}.
Note that the provided implementation is the version proposed by ChatGPT with GPT-5.6 Sol.

\subsection{Tables for dimensions 6 and lower}
\textcolor{white}{Nothing}

\begin{table}[htbp]
\setlength{\tabcolsep}{3.5pt}

\begin{adjustbox}{center,max width=0.96\textheight,max totalheight=0.70\textwidth}
\begin{tabular}{@{}c c @{}}
\toprule
 & $\bigl|\Sylv\cap \Z^d\bigr|$ \\
\midrule

$\Sylv[k][0]$
& $0k + 1$ \\
\addlinespace[2pt]

$\Sylv[k][1]$
& $1 k + 2$ \\
\addlinespace[2pt]

$\Sylv[k][2]$
& $3 k + 6$ \\
\addlinespace[2pt]

$\Sylv[k][3]$
& $16 k + \textcolor{blue}{\textbf{23}}$ \\
\addlinespace[2pt]

$\Sylv[k][4]$
& $\textcolor{blue}{\textbf{328}} k + \textcolor{red}{\textbf{352}}$ \\
\addlinespace[2pt]

$\Sylv[k][5]$
& $177716 k + 178069$ \\
\addlinespace[2pt]

$\Sylv[k][6]$
& $134562679888 k + 134562857958$ \\

\bottomrule
\end{tabular}
\end{adjustbox}
\caption{Number of lattice points of $\Sylv$ for $1 \leq d \leq 6$.
As an illustration of \Cref{cor:NbrLatticePoints_RecursionFormula}, the sum of the \textcolor{blue}{\textbf{blue}} numbers plus $1$ is equal to the \textcolor{red}{\textbf{red}} number.
}
\label{tab:LatticePointsNbr}
\end{table}
\begin{table}[htbp]
\setlength{\tabcolsep}{3.5pt}

\begin{adjustbox}{center,max width=0.96\textheight,max totalheight=0.70\textwidth}
\begin{tabular}{@{}c c c c c c c c@{}}
\toprule
 & $t^0$ & $t^1$ & $t^2$ & $t^3$ & $t^4$ & $t^5$ & $t^6$ \\
\midrule

$\Sylv[k][0]$
& $1$
& 
&
&
&
&
& \\
\addlinespace[2pt]

$\Sylv[k][1]$
& $1$
& $1k+1$
&
&
&
&
& \\
\addlinespace[2pt]

$\Sylv[k][2]$
& $1$
& $1k+3$
& $2k+2$
&
&
&
& \\
\addlinespace[3pt]

$\Sylv[k][3]$
& $1$
& $\frac{5}{2}k+\frac{11}{2}$
& $\frac{15}{2}k+\frac{21}{2}$
& $6k+6$
&
&
& \\
\addlinespace[5pt]

$\Sylv[k][4]$
& $1$
& $20k+\frac{51}{2}$
& $91k+\frac{203}{2}$
& $\frac{287}{2}k+\frac{301}{2}$
& $\frac{147}{2}k+\frac{147}{2}$
&
& \\
\addlinespace[8pt]

$\Sylv[k][5]$
& \makecell{\textcolor{white}{$0k$}\\[4pt]$1$}
& \makecell{$\frac{49129}{20}k$\\[4pt]$+\frac{49639}{20}$}
& \makecell{$\frac{167555}{8}k$\\[4pt]$+\frac{168361}{8}$}
& \makecell{$\frac{236887}{4}k$\\[4pt]$+\frac{237489}{4}$}
& \makecell{$\frac{543305}{8}k$\\[4pt]$+\frac{543907}{8}$}
& \makecell{$\frac{271803}{10}k$\\[4pt]$+\frac{271803}{10}$}
& \\
\addlinespace[8pt]

$\Sylv[k][6]$
& \makecell{\textcolor{white}{$0k$}\\[4pt]$1$}
& \makecell{$\frac{275766851}{5}k$\\[4pt]$+\frac{3309351281}{60}$}
& \makecell{$\frac{245086592963}{60}k$\\[4pt]$+\frac{490175711341}{120}$}
& \makecell{$\frac{182807811789}{8}k$\\[4pt]$+\frac{548424859903}{24}$}
& \makecell{$\frac{145214466488}{3}k$\\[4pt]$+\frac{1161717363625}{24}$}
& \makecell{$\frac{1775008403987}{40}k$\\[4pt]$+\frac{1775009491801}{40}$}
& \makecell{$\frac{295834824649}{20}k$\\[4pt]$+\frac{295834824649}{20}$}
\\
\bottomrule
\end{tabular}
\end{adjustbox}
\caption{Ehrhart polynomial of $\Sylv$, \ie $\Ehr(\Sylv; t)$, for $1 \leq d \leq 6$.}
\label{tab:ehrhart_coefficients}
\end{table}
\begin{table}[htbp]
\setlength{\tabcolsep}{3.5pt}

\begin{adjustbox}{center,max width=0.96\textheight,max totalheight=0.70\textwidth}
\begin{tabular}{@{}c c c c c c c c@{}}
\toprule
 & $t^0(1+t)^{d-0}$ & $t^1(1+t)^{d-1}$ & $t^2(1+t)^{d-2}$ & $t^3(1+t)^{d-3}$ & $t^4(1+t)^{d-4}$ & $t^5(1+t)^{d-5}$ & $t^6(1+t)^{d-6}$ \\
\midrule

$\Sylv[k][0]$
& $1$
& 
& 
& 
& 
& 
&  \\
\addlinespace[2pt]

$\Sylv[k][1]$
& $1$
& $k$
& 
& 
& 
& 
&  \\
\addlinespace[2pt]

$\Sylv[k][2]$
& $1$
& $1k+1$
& $k$
& 
& 
& 
&  \\
\addlinespace[3pt]

$\Sylv[k][3]$
& $1$
& $\frac{5}{2}k+\frac{5}{2}$
& $\frac{5}{2}k+\frac{5}{2}$
& $k$
& 
& 
&  \\
\addlinespace[5pt]

$\Sylv[k][4]$
& $1$
& $20k+\frac{43}{2}$
& $31k+31$
& $\frac{43}{2}k+20$
& $k$
& 
&  \\
\addlinespace[8pt]

$\Sylv[k][5]$
& \makecell{\textcolor{white}{$0k$}\\[4pt]$1$}
& \makecell{$\frac{49129}{20}k$\\[4pt]$+\frac{49539}{20}$}
& \makecell{$\frac{444743}{40}k$\\[4pt]$+\frac{445093}{40}$}
& \makecell{$\frac{445093}{40}k$\\[4pt]$+\frac{444743}{40}$}
& \makecell{$\frac{49539}{20}k$\\[4pt]$+\frac{49129}{20}$}
& \makecell{$k$\\[4pt]\textcolor{white}{$0$}}
&  \\
\addlinespace[8pt]

$\Sylv[k][6]$
& \makecell{\textcolor{white}{$0k$}\\[4pt]$1$}
& \makecell{$\frac{275766851}{5}k$\\[4pt]$+\frac{3309350921}{60}$}
& \makecell{$\frac{228540581903}{60}k$\\[4pt]$+$\textcolor{blue}{\textbf{$\frac{457082200331}{120}$}}}
& \makecell{$\frac{847608477371}{120}k$\\[4pt]$+\frac{847608477371}{120}$}
& \makecell{\textcolor{blue}{\textbf{$\frac{457082200331}{120}$}}$k$\\[4pt]$+\frac{228540581903}{60}$}
& \makecell{$\frac{3309350921}{60}k$\\[4pt]$+\frac{275766851}{5}$}
& \makecell{$k$\\[4pt]\textcolor{white}{$0$}} \\
\bottomrule
\end{tabular}
\end{adjustbox}
\caption{Magic coefficients of $\Ehr(\Sylv; t)$ for $0 \leq d \leq 6$.
Illustrating \Cref{prop:Symmetry_MagicEhrhart}, in the row for $\Sylv[k][6]$, reading the top left-to-right, or the bottom right-to-left give the same sequence.
}
\label{tab:magic_coefficients}
\end{table}


\begin{table}[htbp]
\setlength{\tabcolsep}{3.5pt}

\begin{adjustbox}{center,max width=0.96\textheight,max totalheight=0.70\textwidth}
\begin{tabular}{@{}c c c c c c c c@{}}
\toprule
 & $t^0$ & $t^1$ & $t^2$ & $t^3$ & $t^4$ & $t^5$ & $t^6$ \\
\midrule

$\Sylv[k][0]$
& $1$
& 
&
&
&
&
& \\
\addlinespace[2pt]

$\Sylv[k][1]$
& $1$
& $k$
&
&
&
&
& \\
\addlinespace[2pt]

$\Sylv[k][2]$
& $1$
& $3k+3$
& $k$
&
&
&
& \\
\addlinespace[3pt]

$\Sylv[k][3]$
& $1$
& $16k+19$
& $19k+16$
& $k$
&
&
& \\
\addlinespace[5pt]

$\Sylv[k][4]$
& $1$
& $328k+347$
& $1088k+1088$
& $347k+328$
& $k$
&
& \\
\addlinespace[8pt]

$\Sylv[k][5]$
& \makecell{\textcolor{white}{$0k$}\\[4pt]$1$}
& \makecell{$177716k$\\[4pt]$+178063$}
& \makecell{$1452548k$\\[4pt]$+1453308$}
& \makecell{$1453308k$\\[4pt]$+1452548$}
& \makecell{$178063k$\\[4pt]$+177716$}
& \makecell{$k$\\[4pt]\textcolor{white}{$0$}}
& \\
\addlinespace[8pt]

$\Sylv[k][6]$
& \makecell{\textcolor{white}{$0k$}\\[4pt]$1$}
& \makecell{$134562679888k$\\[4pt]$+134562857951$}
& \makecell{$\textcolor{blue}{\textbf{2398473782180}}k$\\[4pt]$+2398475057772 $}
& \makecell{$5583979309572k$\\[4pt]$+5583979309572$}
& \makecell{$2398475057772k$\\[4pt]$+\textcolor{blue}{\textbf{2398473782180}}$}
& \makecell{$134562857951k$\\[4pt]$+134562679888$}
& \makecell{$k$\\[4pt]\textcolor{white}{$0$}}
\\
\bottomrule
\end{tabular}
\end{adjustbox}
\caption{
$h^\ast$-polynomial of $\Sylv$ for $0 \leq d \leq 6$.
Illustrating \Cref{cor:Symmetry_h_star}, in the row for $\Sylv[k][6]$, reading the top left-to-right, or the bottom right-to-left give the same sequence.
}
\label{tab:hstar_coefficients}
\end{table}
\begin{table}[htbp]
\setlength{\tabcolsep}{3.5pt}

\begin{adjustbox}{center,max width=0.96\textheight,max totalheight=0.70\textwidth}
\begin{tabular}{@{}c c c c c c c c@{}}
\toprule
 & $t^0$ & $t^1$ & $t^2$ & $t^3$ & $t^4$ & $t^5$ & $t^6$ \\
\midrule

$\Sylv[k][0]$
& $0$
& 
&
&
&
&
& \\
\addlinespace[2pt]

$\Sylv[k][1]$
& $0$
& $k$
&
&
&
&
& \\
\addlinespace[2pt]

$\Sylv[k][2]$
& $0$
& $k$
& $k$
&
&
&
& \\
\addlinespace[3pt]

$\Sylv[k][3]$
& $0$
& $k$
& $10k + 8$
& $k$
&
&
& \\
\addlinespace[5pt]

$\Sylv[k][4]$
& $0$
& $k$
& $251k+240$
& $251k+240$
& $k$
&
& \\
\addlinespace[8pt]

$\Sylv[k][5]$
& \makecell{{$0$}}
& \makecell{$k$}
& \makecell{$151700k+151448 $}
& \makecell{$606822k + 606320$}
& \makecell{$151700k+151448 $}
& \makecell{$k$}
& \\
\addlinespace[8pt]

$\Sylv[k][6]$
& \makecell{\textcolor{white}{$0$}\\[4pt]$0$}
& \makecell{$k$\\[4pt]\textcolor{white}{$0$}}
& \makecell{$123769377141k$\\[4pt]$+123769225440$}
& \makecell{$1361462238362k$\\[4pt]$+1361461479840$}
& \makecell{$1361462238362k$\\[4pt]$+1361461479840$}
& \makecell{$123769377141k$\\[4pt]$+123769225440$}
&\makecell{$k$\\[4pt]\textcolor{white}{$0$}}
\\
\bottomrule
\end{tabular}
\end{adjustbox}
\caption{Local $h^\ast$-polynomial of $\Sylv$, \ie $\ell^\ast(\Sylv; t)$, for $0 \leq d \leq 6$.
}
\label{tab:local_h_coefficients}
\end{table}
\begin{table}[htbp]
\setlength{\tabcolsep}{3.5pt}

\begin{adjustbox}{center,max width=0.96\textheight,max totalheight=0.70\textwidth}
\begin{tabular}{@{}c c c c c c c c@{}}
\toprule
 & $t^0$ & $t^1$ & $t^2$ & $t^3$ & $t^4$ & $t^5$ & $t^6$ \\
\midrule

$\Sylv[k][0]$
& $1$
&
&
&
&
&
& \\
\addlinespace[2pt]

$\Sylv[k][1]$
& $1$
& $1$
&
&
&
&
& \\
\addlinespace[2pt]

$\Sylv[k][2]$
& $1$
& $2k+4$
& $1$
&
&
&
& \\
\addlinespace[3pt]

$\Sylv[k][3]$
& $1$
& $15k+20$
& $15k+20$
& $1$
&
&
& \\
\addlinespace[5pt]

$\Sylv[k][4]$
& $1$
& $327k+348$
& $1068k+1108$
& $327k+348$
& $1$
&
& \\
\addlinespace[8pt]

$\Sylv[k][5]$
& \makecell{$1$}
& \makecell{$177715k+178064$}
& \makecell{$1452200k+1453656$}
& \makecell{$1452200k+1453656$}
& \makecell{$177715k+178064$}
& \makecell{$1$}
& \\
\addlinespace[8pt]

$\Sylv[k][6]$
& \makecell{\textcolor{white}{$0$}\\[4pt]$1$}
& \makecell{$134562679887k$\\[4pt]$+134562857952$}
& \makecell{$2398473604116k$\\[4pt]$+2398475235836$}
& \makecell{$5583977855916k$\\[4pt]$+5583980763228$}
& \makecell{$2398473604116k$\\[4pt]$+2398475235836$}
& \makecell{$134562679887k$\\[4pt]$+134562857952$}
& \makecell{\textcolor{white}{$0$}\\[4pt]$1$}
\\
\bottomrule
\end{tabular}
\end{adjustbox}

\caption{Boundary $h^\ast$-polynomial of $\Sylv$, \ie $h^\ast(\partial \Sylv; t)$, for $0 \leq d \leq 6$.
}
\label{tab:boundary_h_coefficients}
\end{table}

\begin{table}[p]
\centering
\rotatebox{90}{%
\begin{minipage}{\textheight}
\centering
\begin{tabular}{@{}c c c c c c c c c@{}}
\toprule
 & $f^\ast_{-1}$ & $f^\ast_0$ & $f^\ast_1$ & $f^\ast_2$ & $f^\ast_3$ & $f^\ast_4$ & $f^\ast_5$ & $f^\ast_6$ \\
\midrule

$\Sylv[k][0]$
& $\textcolor{white}{0k +} \textcolor{blue}{\textbf{1}}$
& $\textcolor{white}{0k +} 1$
& 
& 
& 
&
&
& \\
\addlinespace[2pt]

$\Sylv[k][1]$
& $\textcolor{white}{0k +} 1$
& $1k + \textcolor{blue}{\textbf{2}}$
& $1k + 1$
&
&
&
&
&  \\
\addlinespace[2pt]

$\Sylv[k][2]$
& $\textcolor{white}{0k +} 1$
& $3k + 6$
& $7k + \textcolor{blue}{\textbf{9}}$
& $4k + 4$
&
&
&
&  \\
\addlinespace[3pt]

$\Sylv[k][3]$
& $\textcolor{white}{0k +} 1$
& $16k + 23$
& $67k + 79$
& $87k + \textcolor{blue}{\textbf{93}}$
& $36k + 36$
&
&
& \\
\addlinespace[5pt]

$\Sylv[k][4]$
& $\textcolor{white}{0k +} 1$
& $328k + 352$
& $2400k + 2486$
& $\textcolor{blue}{\textbf{5579}}k + \textcolor{red}{\textbf{5684}}$
& $5271k + 5313$
& $1764k + 1764$
&
& \\
\addlinespace[8pt]

$\Sylv[k][5]$
& \makecell{\\[4pt]$1$}
& \makecell{$177716k$\\[4pt]$+178069$}
& \makecell{$2341128k$\\[4pt]$+2343638$}
& \makecell{$9040660k$\\[4pt]$+9046430$}
& \makecell{$15030435k$\\[4pt]$+15035853$}
& \makecell{$11414823k$\\[4pt]$+11416629$}
& \makecell{$3261636k$\\[4pt]$+3261636$}
& \\
\addlinespace[8pt]

$\Sylv[k][6]$
& \makecell{\\[4pt]$1$}
& \makecell{$134562679888k$\\[4pt]$+134562857958$}
& \makecell{$3205849861508k$\\[4pt]$+3205852205499$}
& \makecell{$19594788418792k$\\[4pt]$+19594797467732$}
& \makecell{$51410383715620k$\\[4pt]$+51410398757243$}
& \makecell{$66837041908819k$\\[4pt]$+66837053330866$}
& \makecell{$42600213117735k$\\[4pt]$+42600216381177$}
& \makecell{$10650053687364k$\\[4pt]$+10650053687364$} \\
\bottomrule
\end{tabular}
\caption{$f^\ast$-vectors of $\Sylv[k][d]$ for $0 \leq d \leq 6$.
As an illustration of \Cref{thm:f_star_induction}, the sum of the \textcolor{blue}{\textbf{blue}} numbers is equal to the \textcolor{red}{\textbf{red}} number.}
\label{tab:f_star_coefficients}

\end{minipage}
}

\end{table}

\clearpage
\subsection{Results for \texorpdfstring{$d = 7$}{d = 7}}\label{ssec:TablesD7}
\textcolor{white}{Nothing}

\textbf{Ehrhart polynomial of $\Sylv$ for $d = 7$:}

$
\begin{array}{rrcll}
\vspace{0.1cm}
&\Bigl(\tfrac{450094099426234898031943}{20}\,k &+& \tfrac{450094099426234898031943}{20}\Bigr) &t^7  \\
\vspace{0.1cm}
+&\Bigl(\tfrac{3780790435180018141569961}{48}\,k &+& \tfrac{18903952175903640726833407}{240}\Bigr) &t^6\\
\vspace{0.1cm}
+&\Bigl(\tfrac{24744702371155698986025517}{240}\,k &+& \tfrac{24744702371166349042976323}{240}\Bigr) &t^5 \\ 
\vspace{0.1cm}
+&\Bigl(\tfrac{2920375097628417431403805}{48}\,k &+& \tfrac{973458365876913621918503}{16}\Bigr) &t^4 \\
\vspace{0.1cm}
+&\Bigl(\tfrac{1160115412419970824109333}{80}\,k &+& \tfrac{3480346237265396720927029}{240}\Bigr) &t^3\\
\vspace{0.1cm}
+&\Bigl(\tfrac{7061995569652883460737}{24}\,k &+& \tfrac{4413747231094324176263}{15}\Bigr) &t^2 \\
\vspace{0.1cm}
-&\Bigl(\tfrac{6216272723965441907549}{30}\,k &+& \tfrac{12432545447927574463817}{60}\Bigr) &t\\
\vspace{0.1cm}
&&+& 1 &

\end{array}
$

\vspace{0.25cm}
\textbf{Ehrhart polynomial of $\Sylv$ for $d = 7$ in the magic basis:}

$
\begin{array}{crcll}
\vspace{0.1cm}
&k &&&  \,t^7(1+t)^{d-7}\\
\vspace{0.1cm}
- &\Bigl(\tfrac{12432545447927574464237}{60}k &+& \tfrac{6216272723965441907549}{30}  \Bigr)  &t^6(1+t)^{d-6} \\
\vspace{0.1cm}
+&\Bigl(\tfrac{46125130805971371744607}{30}k &+& \tfrac{184500523223435023084861}{120} \Bigr) &t^5(1+t)^{d-5} \\  
\vspace{0.1cm}
+&\Bigl(\tfrac{793764577300732106329523}{80}k &+& \tfrac{2381293731901415270385269}{240} \Bigr) &t^4(1+t)^{d-4} \\
\vspace{0.1cm}
+&\Bigl(\tfrac{2381293731901415270385269}{240}k &+& \tfrac{793764577300732106329523}{80} \Bigr) &t^3(1+t)^{d-3}\\
\vspace{0.1cm}
+&\Bigl(\tfrac{184500523223435023084861}{120}k &+& \tfrac{46125130805971371744607}{30} \Bigr) &t^2(1+t)^{d-2}\\
\vspace{0.1cm}
-&\Bigl(\tfrac{6216272723965441907549}{30}k &+& \tfrac{12432545447927574464237}{60} \Bigr)&t(1+t)^{d-1}\\
\vspace{0.1cm}
+&&& 1&t^0(1+t)^{d}
\end{array}
$

\vspace{0.25cm}
\textbf{$h^\ast$-polynomial of $\Sylv$ for $d = 7$:}

$
\begin{array}{crcll}
&k &&&t^7  \\
+&(279803730164623785353587 \,k &+& 279803730164489222495636)\,&t^6 \\
+&  (10072752446273184877375736 \,k &+& 10072752446270920964997852)\, &t^5 \\  
+&(46359300351270580479677108 \,k &+& 46359300351267394974149716)\,&t^4 \\
+&(46359300351267394974149716 \,k &+& 46359300351270580479677108)\,&t^3 \\
+&(10072752446270920964997852 \,k &+& 10072752446273184877375736)\,&t^2 \\
+&(279803730164489222495636 \,k &+& 279803730164623785353587)\,&t\\
&&& 1&

\end{array}
$

\vspace{0.25cm}
\textbf{$f^\ast$-vector of $\Sylv$ for $d = 7$:}

$
\begin{array}{lrcll}
f_{-1}^\ast(\Sylv) = &&&1 &  \\
f_{0}^\ast(\Sylv[k][7]) = &279803730164489222495636 \,k &+&  279803730164623785353595\,& \\
f_{1}^\ast(\Sylv[k][7]) = &12031378557422345522467304 \,k &+& 12031378557425551374850873\,& \\
f_{2}^\ast(\Sylv[k][7]) = &  112671693362347194436545184 \,k &+& 112671693362366789236356907\, & \\  
f_{3}^\ast(\Sylv[k][7]) = &439040219357428492612740728 \,k &+& 439040219357479903020546911\,& \\
f_{4}^\ast(\Sylv[k][7]) = &870351136845204998624885628 \,k &+& 870351136845271835693258117\,& \\
f_{5}^\ast(\Sylv[k][7]) = &929214031716799699184416739 \,k &+& 929214031716842299412219963\,& \\
f_{6}^\ast(\Sylv[k][7]) = &510406708749345049339747959 \,k &+& 510406708749355699396698765\,&\\
f_{7}^\ast(\Sylv[k][7]) = &113423713055411194304049636\, k& + & 113423713055411194304049636&

\end{array}
$

\addtocontents{toc}{\protect\setcounter{tocdepth}{2}}

\bibliographystyle{alpha}
\bibliography{biblio.bib}

@article{balletti2021enumeration,
	title={Enumeration of lattice polytopes by their volume},
	author={Balletti, Gabriele},
	journal={Discrete \& Computational Geometry},
	volume={65},
	number={4},
	pages={1087--1122},
	year={2021},
	publisher={Springer}
}

@article{Kasprzyk,
 author  = {Kasprzyk, Alexander M.},
 title   = {Canonical Toric {F}ano Threefolds},
 journal = {Canadian Journal of Mathematics},
 volume  = {62},
 number  = {6},
 pages   = {1293--1309},
 year    = {2010},
 doi     = {10.4153/CJM-2010-070-3}
}

@book {BeckSanyal,
    AUTHOR = {Beck, Matthias and Sanyal, Raman},
     TITLE = {Combinatorial reciprocity theorems},
    SERIES = {Graduate Studies in Mathematics},
    VOLUME = {195},
      NOTE = {An invitation to enumerative geometric combinatorics},
 PUBLISHER = {American Mathematical Society, Providence, RI},
      YEAR = {2018},
     PAGES = {xiv+308},
      ISBN = {978-1-4704-2200-4},
   MRCLASS = {05-01 (05A15 05C30 06A07 11P21 52B05 52C35)},
  MRNUMBER = {3839322},
MRREVIEWER = {Philippe\ Nadeau},
       DOI = {10.1090/gsm/195},
       URL = {https://doi.org/10.1090/gsm/195},
}

@article{lagarias1991bounds,
	title={Bounds for lattice polytopes containing a fixed number of interior points in a sublattice},
	author={Lagarias, Jeffrey C. and Ziegler, G{\"u}nter M},
	journal={Canadian Journal of Mathematics},
	volume={43},
	number={5},
	pages={1022--1035},
	year={1991},
	publisher={Cambridge University Press}
}

@article{hensley1983lattice,
	title={Lattice vertex polytopes with interior lattice points},
	author={Hensley, Douglas},
	journal={Pacific Journal of Mathematics},
	volume={105},
	number={1},
	pages={183--191},
	year={1983},
	publisher={Mathematical Sciences Publishers}
}

@article{wills1982lattice,
	title={On lattice polytopes having interior lattice points.},
	author={Wills, J{\"o}rg M. and Zaks, Joseph and Perles, Micha A.},
	journal={Elemente der Mathematik},
	volume={37},
	pages={44--45},
	year={1982}
}

@article{averkov2020local,
	title={Local optimality of {Z}aks--{P}erles--{W}ills simplices},
	author={Averkov, Gennadiy},
	journal={Advances in Applied Mathematics},
	volume={112},
	pages={101943},
	year={2020},
	publisher={Elsevier}
}

@article{averkov2015largest,
	title={Largest integral simplices with one interior integral point: {S}olution of {H}ensley's conjecture and related results},
	author={Averkov, Gennadiy and Kr{\"u}mpelmann, Jan and Nill, Benjamin},
	journal={Advances in Mathematics},
	volume={274},
	pages={118--166},
	year={2015},
	publisher={Elsevier}
}

@article{averkov2020lattice,
	title={Lattice simplices with a fixed positive number of interior lattice points: A nearly optimal volume bound},
	author={Averkov, Gennadiy and Kr{\"u}mpelmann, Jan and Nill, Benjamin},
	journal={International Mathematics Research Notices},
	volume={2020},
	number={13},
	pages={3871--3885},
	year={2020},
	publisher={Oxford University Press}
}

@misc{balletti2016three,
      title={Three-dimensional lattice polytopes with two interior lattice points}, 
      author={Gabriele Balletti and Alexander M. Kasprzyk},
      year={2016},
      eprint={1612.08918},
      archivePrefix={arXiv},
      primaryClass={math.CO},
      url={https://arxiv.org/abs/1612.08918}, 
}

@misc{CaicedoJuhnkePoullot,
title={Ehrhart non-positivity and unimodular triangulations for classes of $s$-lecture hall simplices},
author={Jhon B. Caicedo and Martina Juhnke and Germain Poullot},
year={2025},
eprint={2508.18890},
archivePrefix={arXiv},
primaryClass={math.CO},
url={https://arxiv.org/abs/2508.18890},
}

@article{Balletti_dual_volume, 
title={On the maximum dual volume of a canonical {F}ano polytope}, 
volume={10}, 
DOI={10.1017/fms.2022.93}, 
journal={Forum of Mathematics, Sigma}, 
author={Balletti, Gabriele and Kasprzyk, Alexander M. and Nill, Benjamin}, 
year={2022}, 
pages={e109}
}

@article {Unimodular_triangulation_existence,
    AUTHOR = {Haase, Christian and Paffenholz, Andreas and Piechnik, Lindsay
              C. and Santos, Francisco},
     TITLE = {Existence of unimodular triangulations---positive results},
   JOURNAL = {Mem. Amer. Math. Soc.},
  FJOURNAL = {Memoirs of the American Mathematical Society},
    VOLUME = {270},
      YEAR = {2021},
    NUMBER = {1321},
     PAGES = {v+83},
      ISSN = {0065-9266,1947-6221},
      ISBN = {978-1-4704-4716-8; 978-1-4704-6530-8},
   MRCLASS = {52B20 (13F20 13P10 14M25)},
  MRNUMBER = {4277268},
MRREVIEWER = {Margaret\ M.\ Bayer},
       DOI = {10.1090/memo/1321},
       URL = {https://doi.org/10.1090/memo/1321},
}

@book {Beck_book,
    AUTHOR = {Beck, Matthias and Robins, Sinai},
     TITLE = {Computing the continuous discretely},
    SERIES = {Undergraduate Texts in Mathematics},
   EDITION = {Second},
      NOTE = {Integer-point enumeration in polyhedra,
              With illustrations by David Austin},
 PUBLISHER = {Springer, New York},
      YEAR = {2015},
     PAGES = {xx+285},
      ISBN = {978-1-4939-2968-9; 978-1-4939-2969-6},
   MRCLASS = {11P21 (05A15 05B15 11-02 11H06 52B05 52B20)},
  MRNUMBER = {3410115},
       DOI = {10.1007/978-1-4939-2969-6},
       URL = {https://doi.org/10.1007/978-1-4939-2969-6},
}

@unpublished {OEIS,
  	KEY = {{OEIS}},
	TITLE = {The {O}n-{L}ine {E}ncyclopedia of {I}nteger {S}equences},
	NOTE = {Published electronically at \url{http://oeis.org}},
	YEAR = {2010},
}

@misc{FerroniHigashitani2024ExamplesCounterexamplesEhrhartTheory,
      title={Examples and counterexamples in {E}hrhart theory}, 
      author={Luis Ferroni and Akihiro Higashitani},
      year={2024},
      eprint={2307.10852},
      archivePrefix={arXiv},
      primaryClass={math.CO},
      url={https://arxiv.org/abs/2307.10852}, 
}

@article {Nill2007VolumeLatticePointsReflexiveSimplices,
    AUTHOR = {Nill, Benjamin},
     TITLE = {Volume and lattice points of reflexive simplices},
   JOURNAL = {Discrete Comput. Geom.},
  FJOURNAL = {Discrete \& Computational Geometry. An International Journal
              of Mathematics and Computer Science},
    VOLUME = {37},
      YEAR = {2007},
    NUMBER = {2},
     PAGES = {301--320},
      ISSN = {0179-5376,1432-0444},
   MRCLASS = {14M25 (14J45 52B20)},
  MRNUMBER = {2295061},
MRREVIEWER = {Alexander\ M.\ Kasprzyk},
       DOI = {10.1007/s00454-006-1299-y},
       URL = {https://doi.org/10.1007/s00454-006-1299-y},
}

@article {Magic_Petter,
    AUTHOR = {Br{\"a}nd{\'e}n, Petter},
     TITLE = {On linear transformations preserving the {P}\'olya frequency
              property},
   JOURNAL = {Trans. Amer. Math. Soc.},
  FJOURNAL = {Transactions of the American Mathematical Society},
    VOLUME = {358},
      YEAR = {2006},
    NUMBER = {8},
     PAGES = {3697--3716},
      ISSN = {0002-9947,1088-6850},
   MRCLASS = {05A15 (05A05 05A19 20F55 26C10)},
  MRNUMBER = {2218995},
MRREVIEWER = {Toshihiro\ Watanabe},
       DOI = {10.1090/S0002-9947-06-03856-6},
       URL = {https://doi.org/10.1090/S0002-9947-06-03856-6},
}

@article{Athanasiadis2004-hStarVectorsAndEulerianPolynomials,
    AUTHOR = {Athanasiadis, Christos A.},
     TITLE = {{$h^\ast$}-vectors, {E}ulerian polynomials and stable
              polytopes of graphs},
   JOURNAL = {Electron. J. Combin.},
  FJOURNAL = {Electronic Journal of Combinatorics},
    VOLUME = {11},
      YEAR = {2004/06},
    NUMBER = {2},
     PAGES = {Research Paper 6, 13},
      ISSN = {1077-8926},
   MRCLASS = {05E25 (05C17 06A11 13A99 52B20)},
  MRNUMBER = {2120101},
MRREVIEWER = {Jaroslav\ Ivan\v co},
       DOI = {10.37236/1863},
       URL = {https://doi.org/10.37236/1863},
}

@article{Reeve1957,
    AUTHOR = {Reeve, John E.},
     TITLE = {On the volume of lattice polyhedra},
   JOURNAL = {Proc. London Math. Soc. (3)},
  FJOURNAL = {Proceedings of the London Mathematical Society. Third Series},
    VOLUME = {7},
      YEAR = {1957},
     PAGES = {378--395},
      ISSN = {0024-6115,1460-244X},
   MRCLASS = {52.00 (10.00)},
  MRNUMBER = {95452},
MRREVIEWER = {H.\ S. M. Coxeter},
       DOI = {10.1112/plms/s3-7.1.378},
       URL = {https://doi.org/10.1112/plms/s3-7.1.378},
}

@article{BeckDeligeorgakiHlavacekValenciaPorras2024-fStarVectors,
    AUTHOR = {Beck, Matthias and Deligeorgaki, Danai and Hlavacek, Max and
              Valencia-Porras, Jer\'onimo},
     TITLE = {Inequalities for {$f^*$}-vectors of lattice polytopes},
   JOURNAL = {Adv. Geom.},
  FJOURNAL = {Advances in Geometry},
    VOLUME = {24},
      YEAR = {2024},
    NUMBER = {2},
     PAGES = {141--150},
      ISSN = {1615-715X,1615-7168},
   MRCLASS = {52B20 (05A15 05A20)},
  MRNUMBER = {4737131},
MRREVIEWER = {Ruriko\ Yoshida},
       DOI = {10.1515/advgeom-2024-0002},
       URL = {https://doi.org/10.1515/advgeom-2024-0002},
}

@article{BetkeMcMullen1985-LatticePoints,
    AUTHOR = {Betke, Ulrich and McMullen, Peter},
     TITLE = {Lattice points in lattice polytopes},
   JOURNAL = {Monatsh. Math.},
  FJOURNAL = {Monatshefte f\"ur Mathematik},
    VOLUME = {99},
      YEAR = {1985},
    NUMBER = {4},
     PAGES = {253--265},
      ISSN = {0026-9255,1436-5081},
   MRCLASS = {52A43 (11H06 52A25)},
  MRNUMBER = {799674},
MRREVIEWER = {J.\ M.\ Wills},
       DOI = {10.1007/BF01312545},
       URL = {https://doi.org/10.1007/BF01312545},
}

@article{BajoBeck2023-BoundaryhStar,
    AUTHOR = {Bajo, Esme and Beck, Matthias},
     TITLE = {Boundary {$h^\ast$}-polynomials of rational polytopes},
   JOURNAL = {SIAM J. Discrete Math.},
  FJOURNAL = {SIAM Journal on Discrete Mathematics},
    VOLUME = {37},
      YEAR = {2023},
    NUMBER = {3},
     PAGES = {1952--1969},
      ISSN = {0895-4801,1095-7146},
   MRCLASS = {52B20 (05A15 52C07)},
  MRNUMBER = {4633746},
MRREVIEWER = {Michael\ von Thaden},
       DOI = {10.1137/22M1508911},
       URL = {https://doi.org/10.1137/22M1508911},
}

@article {Ehrhart_poly,
    AUTHOR = {Ehrhart, Eug\`ene},
     TITLE = {Sur les poly\`edres rationnels homoth\'etiques \`a{} {$n$}\
              dimensions},
   JOURNAL = {C. R. Acad. Sci. Paris},
  FJOURNAL = {Comptes Rendus Hebdomadaires des S\'eances de l'Acad\'emie des
              Sciences},
    VOLUME = {254},
      YEAR = {1962},
     PAGES = {616--618},
      ISSN = {0001-4036},
   MRCLASS = {10.25 (52.10)},
  MRNUMBER = {130860},
}

@article{stanley1980decompositions,
AUTHOR = {Stanley, Richard P.},
     TITLE = {Decompositions of rational convex polytopes},
   JOURNAL = {Ann. Discrete Math.},
  FJOURNAL = {Annals of Discrete Mathematics},
    VOLUME = {6},
      YEAR = {1980},
     PAGES = {333--342},
   MRCLASS = {52A43},
  MRNUMBER = {593545},
MRREVIEWER = {P.\ McMullen},
}

@article{Stanley1974,
  author  = {Stanley, Richard P.},
  title   = {Combinatorial Reciprocity Theorems},
  journal = {Advances in Mathematics},
  volume  = {14},
  number  = {2},
  pages   = {194--253},
  year    = {1974},
  doi     = {10.1016/0001-8708(74)90030-9}
}

@article {Local_h_definition,
    AUTHOR = {Katz, Eric and Stapledon, Alan},
     TITLE = {Local {$h$}-polynomials, invariants of subdivisions, and mixed
              {E}hrhart theory},
   JOURNAL = {Adv. Math.},
  FJOURNAL = {Advances in Mathematics},
    VOLUME = {286},
      YEAR = {2016},
     PAGES = {181--239},
      ISSN = {0001-8708,1090-2082},
   MRCLASS = {52B20 (05A15 06A07)},
  MRNUMBER = {3415684},
MRREVIEWER = {Matthias\ Beck},
       DOI = {10.1016/j.aim.2015.09.010},
       URL = {https://doi.org/10.1016/j.aim.2015.09.010},
}

@incollection {box_polynomial_definition,
    AUTHOR = {Braun, Benjamin},
     TITLE = {Unimodality problems in {E}hrhart theory},
 BOOKTITLE = {Recent trends in combinatorics},
    SERIES = {IMA Vol. Math. Appl.},
    VOLUME = {159},
     PAGES = {687--711},
 PUBLISHER = {Springer, [Cham]},
      YEAR = {2016},
      ISBN = {978-3-319-24296-5; 978-3-319-24298-9},
   MRCLASS = {52B20 (05A20)},
  MRNUMBER = {3526428},
MRREVIEWER = {Ruriko\ Yoshida},
       DOI = {10.1007/978-3-319-24298-9\_27},
       URL = {https://doi.org/10.1007/978-3-319-24298-9_27},
}

@misc{boundaryhunimodulartriangulations,
      title={Boundary {$h^\ast$}-vectors and unimodular triangulations}, 
      author={Martina Juhnke and Steffen Schlie},
      year={2026},
      eprint={2604.24377},
      archivePrefix={arXiv},
      primaryClass={math.CO},
      url={https://arxiv.org/abs/2604.24377}, 
}

@article {Felix_f_star,
    AUTHOR = {Breuer, Felix},
     TITLE = {Ehrhart {$f^*$}-coefficients of polytopal complexes are
              non-negative integers},
   JOURNAL = {Electron. J. Combin.},
  FJOURNAL = {Electronic Journal of Combinatorics},
    VOLUME = {19},
      YEAR = {2012},
    NUMBER = {4},
     PAGES = {Paper 16, 22},
      ISSN = {1077-8926},
   MRCLASS = {52B70 (05E45 11B75 52B20)},
  MRNUMBER = {3001653},
MRREVIEWER = {Steven\ Klee},
       DOI = {10.37236/2106},
       URL = {https://doi.org/10.37236/2106},
}

@incollection{Branden2015-UnimodalityLogConcavityRealRootednessGammaPositivity,
    AUTHOR = {Br{\"a}nd{\'e}n, Petter},
     TITLE = {Unimodality, log-concavity, real-rootedness and beyond},
 BOOKTITLE = {Handbook of enumerative combinatorics},
    SERIES = {Discrete Math. Appl. (Boca Raton)},
     PAGES = {437--483},
 PUBLISHER = {CRC Press, Boca Raton, FL},
      YEAR = {2015},
      ISBN = {978-1-4822-2085-8},
   MRCLASS = {05Axx (05D40 05E10)},
  MRNUMBER = {3409348},
}

@manual {Sage,
	AUTHOR = {{{S}age developers}},
	TITLE = {{S}age {M}athematics {S}oftware},
	NOTE = {\url{http://www.sagemath.org}},
	YEAR = {2024},
}

@article{Sylvester1880-OnVulgarFractions,
    AUTHOR = {Sylvester, James J.},
     TITLE = {On a {P}oint in the {T}heory of {V}ulgar {F}ractions},
   JOURNAL = {Amer. J. Math.},
  FJOURNAL = {American Journal of Mathematics},
    VOLUME = {3},
      YEAR = {1880},
    NUMBER = {4},
     PAGES = {332--335},
      ISSN = {0002-9327,1080-6377},
   MRCLASS = {99-04},
  MRNUMBER = {1505274},
       DOI = {10.2307/2369261},
       URL = {https://doi.org/10.2307/2369261},
}

@misc{ferroni2026unimodality,
      title={Unimodality shenanigans in Ehrhart theory}, 
      author={Luis Ferroni},
      year={2026},
      eprint={2609.10513},
      archivePrefix={arXiv},
      primaryClass={math.CO},
      url={https://arxiv.org/abs/2609.10513}, 
}

@article{scott1976convex,
  title={On convex lattice polygons},
  author={Scott, Paul R.},
  journal={Bulletin of the Australian Mathematical Society},
  volume={15},
  number={3},
  pages={395--399},
  year={1976},
  publisher={Cambridge University Press}
}

@article{haase2009lattice,
  title={Lattice polygons and the number $2 i+ 7$},
  author={Haase, Christian and Schicho, Josef},
  journal={The American Mathematical Monthly},
  volume={116},
  number={2},
  pages={151--165},
  year={2009},
  publisher={Taylor \& Francis}
}

@article{batyrev2006multiples,
    AUTHOR = {Batyrev, Victor and Nill, Benjamin},
     TITLE = {Multiples of lattice polytopes without interior lattice
              points},
   JOURNAL = {Mosc. Math. J.},
  FJOURNAL = {Moscow Mathematical Journal},
    VOLUME = {7},
      YEAR = {2007},
    NUMBER = {2},
     PAGES = {195--207, 349},
      ISSN = {1609-3321,1609-4514},
   MRCLASS = {52B20 (14M25)},
  MRNUMBER = {2337878},
MRREVIEWER = {Joseph\ P.\ Rusinko},
       DOI = {10.17323/1609-4514-2007-7-2-195-207},
       URL = {https://doi.org/10.17323/1609-4514-2007-7-2-195-207},
}
\label{sec:biblio}

\end{document}